\documentclass[letterpaper,11pt]{article}
\usepackage{amsmath, amsthm, amssymb}
\usepackage{graphicx}
\usepackage[colorlinks=true, urlcolor=blue, citecolor=black, linkcolor=black]{hyperref}
\usepackage[margin=1in]{geometry}
\usepackage{subcaption}
\usepackage{tikz}
\usepackage{multirow}
\usepackage{cleveref}
\usepackage{array}
\newcolumntype{P}[1]{>{\raggedright\arraybackslash}p{#1}}

\newtheorem{theorem}{Theorem}
\newtheorem{proposition}{Proposition}
\newtheorem{corollary}{Corollary}
\theoremstyle{definition}
\newtheorem{example}{Example}
\newtheorem{definition}{Definition}
\theoremstyle{remark}
\newtheorem{note}{Note}
\newtheorem*{oeis*}{OEIS data}

\newcommand{\id}{\operatorname{id}}

\newcommand{\tid}[1]{t_\mathrm{id}^{#1}}
\newcommand{\trr}[1]{t_{r^2}^{#1}}
\newcommand{\tr}[1]{t_{r}^{#1}}
\newcommand{\tf}[1]{t_{\!f}^{#1}}
\newcommand{\trf}[1]{t_{rf}^{#1}}

\newcommand{\upTri}[3]{
    \fill[xshift=#1 cm, yshift=#2*1.732cm, blue!50!white, rotate around={#3:(1,1.732/3)}]
        (0,0) -- (2,0) -- (1,{sqrt(3)})--cycle;
    \fill[xshift=#1 cm, yshift=#2*1.732cm, white, rotate around={#3:(1,1.732/3)}]
        (0,0) -- (2,0) -- (1,{sqrt(3)/3})--cycle;
    \draw[xshift=#1 cm, yshift=#2*1.732cm, rotate around={#3:(1,1.732/3)}, ultra thick, black, rounded corners=0.001mm]
        (0,0) -- (2,0) -- (1,{sqrt(3)})--cycle;
}

\newcommand{\oeis}[1]{\href{https://oeis.org/#1}{#1}}
\DeclareMathOperator{\lcm}{lcm}

\title{Escher's Cubes: Tiling the Faces of Polyhedra}
\date{}
\author{Peter Kagey\textsuperscript{1} and William Keehn\textsuperscript{2}
\vspace{10pt}\\
\textsuperscript{1}Department of Mathematics and Statistics, Cal Poly Pomona; pkagey@cpp.edu\\
\textsuperscript{2}Prison Mathematics Project}

\begin{document}
\maketitle
\begin{abstract}
  Isohedral or face-transitive polyhedra are polyhedra with identical faces, such that any face can be mapped onto any other via a symmetry of the polyhedron. Examples include the Platonic solids, Catalan solids, bipyramids, and trapezohedra.
  We use the orbit-counting theorem to count the number of ways of tiling the faces of such polyhedra up to their isometries given any arbitrary set of tile designs.
  This approach also counts tilings fixed under each isometry of a given polyhedron and provides explicit enumeration formulas.
  We use this framework to recover sixteen sequences from the On-Line Encyclopedia of Integer Sequences and fill in the gaps by contributing twelve new sequences.
\end{abstract}

\section{Introduction}

In May 1942, the artist M.C. Escher recorded visual experiments in which he took four identical square stamps, each with a rotationally asymmetric pattern (illustrated in Figure \ref{fig:escherTile_1}), placed them together to make a  larger \(2 \times 2\) stamp (perhaps with some of the smaller stamps rotated), and used this stamp to fill up the plane.
As described by Schattschneider \cite{Schattschneider1990, Schattschneider}, Escher systematically cataloged all \(23\) essentially different tilings of the plane up to rigid motion that could be generated by making stamps in this way---equivalently, counting the tilings of \(2 \times 2\) toroidal grid.

\begin{figure}[h!tbp]
  \centering
  \begin{minipage}[b]{0.3\textwidth}
      \centering
      \includegraphics[width=\linewidth]{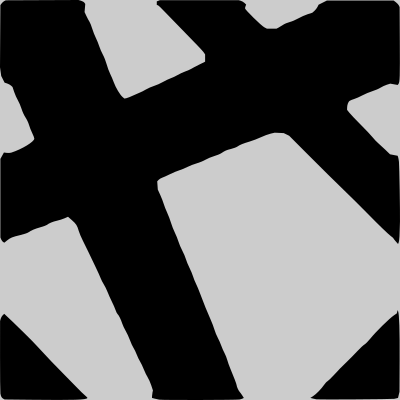}
      \subcaption{Escher-inspired tile design} \label{fig:escherTile_1}
  \end{minipage}
  \hfill
  \begin{minipage}[b]{0.3\textwidth}
    \centering
    \includegraphics[width=\linewidth]{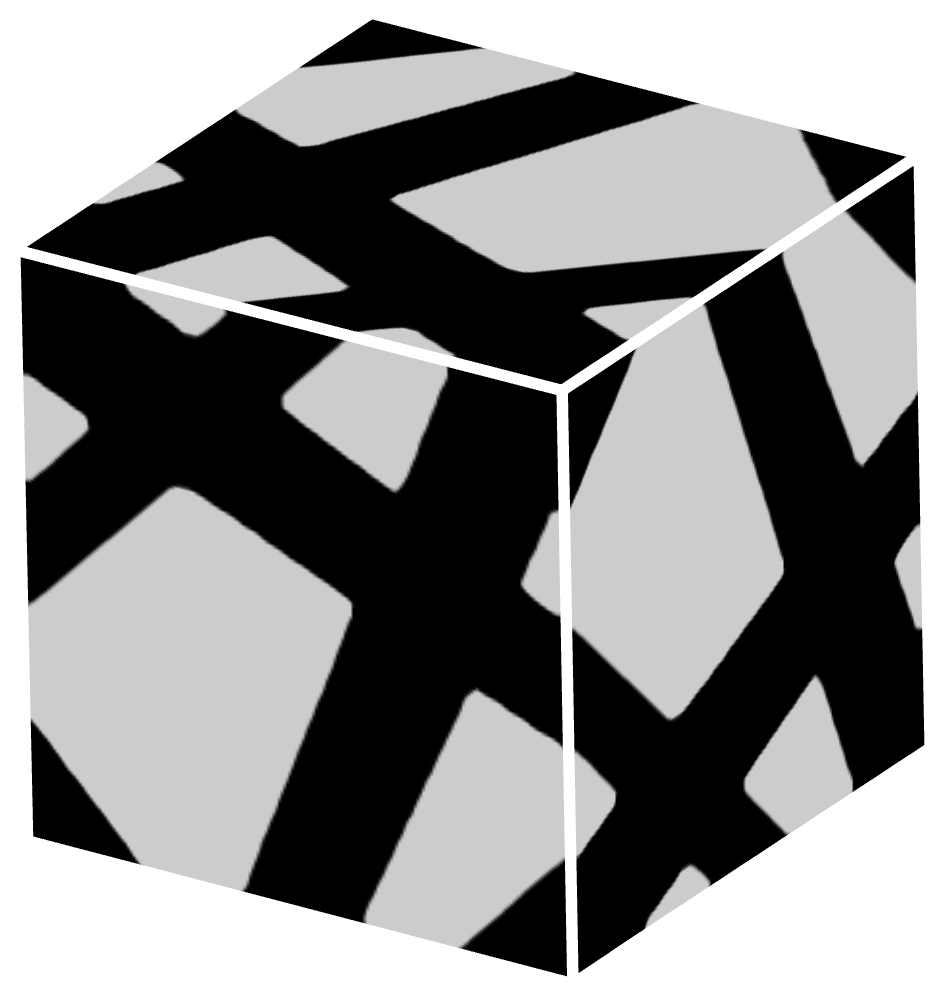}
    \subcaption{Tiling of the faces of a cube} \label{fig:escherTile_2}
  \end{minipage}
  \hfill
  \begin{minipage}[b]{0.25\textwidth}
    \centering
    \includegraphics[width=\linewidth]{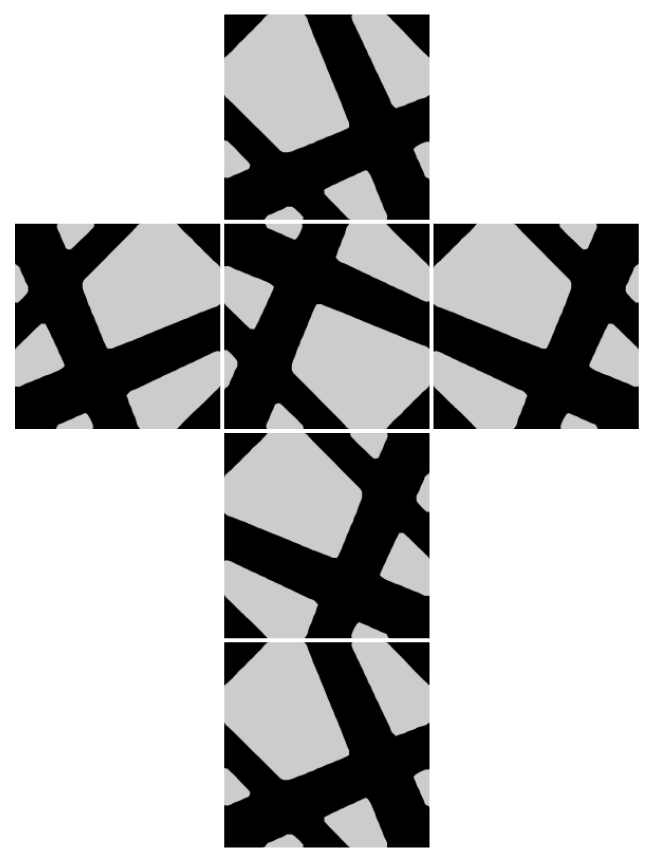}
    \subcaption{Net of a cube tiling} \label{fig:escherTile_3}
  \end{minipage}
  \caption{An example of a tile and a tiling of a cube. There are 5548 distinct tilings by the above tile up to rotation and reflection of the cube.}
  \label{fig:escherTile}
\end{figure}

In our previous paper \cite{KageyKeehn}, we counted tilings of the \(n \times m\) grid, cylindrical grid, and toroidal grid. We now extend these techniques to count the number of face tilings of various families of isohedral (face-transitive) polyhedra, including Platonic solids, Catalan solids, bipyramids, and trapezohedra. Given a polyhedron and a set of \textit{tile designs} with which to decorate the faces, we enumerate the tilings of the faces of the polyhedron up to symmetry of the polyhedron. When the tile designs are \emph{maximally symmetric}, this corresponds to face colorings, and we can use the same technique to enumerate the vertex colorings of each polyhedron's dual. We also use results from the previous paper to enumerate tilings of bipyramids and trapezohedra in bijection with tilings of cylindrical grids.

We are also interested in counting the number of tilings up to \textit{subgroups} of a polyhedron's symmetry group. In some cases, this may be directly of interest: for instance, we may have a physical object with both rotational and reflectional symmetries, but we are only interested in the rotational symmetries which we can perform in 3D space. Other times, we can see that the tilings of one object up to a subgroup of its symmetry group are equal to the number of tilings of another object up to its entire symmetry group, as in the case of the dodecahedron and pyritohedron or in the case of the cube and the \(3\)-trapezohedron.

\subsection{Technique and definitions}
In order to count the tilings up to symmetry, we must specify the polyhedron, its isometry group (or a subgroup thereof), and combinatorial data describing the allowable \emph{tile designs}, which we describe in terms of their symmetries.

Informally, tile designs are the patterns that can be placed on each face, along with their rotations and reflections. In \Cref{fig:tilings}, we show two tilings of the tetrahedron, where each face is decorated with a rotation/reflection of the triangular design
\begin{tikzpicture}[scale=0.6,baseline=3]
  \fill[blue!50!white] (0,0)--(1/2,{sqrt(3)/2})--(1,0)--(1/2,{sqrt(3)/6})--cycle;
  \draw[thick] (0,0)--(1/2,{sqrt(3)/2})--(1,0)--cycle;
\end{tikzpicture}, which generates the set of tile designs resulting from its rotations and reflections:
\(
  \left\{
    \begin{tikzpicture}[scale=0.6,baseline=3]
      \fill[blue!50!white] (0,0)--(1/2,{sqrt(3)/2})--(1,0)--(1/2,{sqrt(3)/6})--cycle;
      \draw[thick] (0,0)--(1/2,{sqrt(3)/2})--(1,0)--cycle;
  \end{tikzpicture},
  \begin{tikzpicture}[scale=0.6,baseline=3]
    \fill[blue!50!white] (0,0)--(1/2,{sqrt(3)/2})--(1/2,{sqrt(3)/6})--(1,0)--cycle;
    \draw[thick] (0,0)--(1/2,{sqrt(3)/2})--(1,0)--cycle;
  \end{tikzpicture},
  \begin{tikzpicture}[scale=0.6,baseline=3]
    \fill[blue!50!white] (0,0)--(1/2,{sqrt(3)/6})--(1/2,{sqrt(3)/2})--(1,0)--cycle;
    \draw[thick] (0,0)--(1/2,{sqrt(3)/2})--(1,0)--cycle;
\end{tikzpicture}
  \right\}\!.
\)

For the purposes of counting face tilings of the polyhedron, the only information we need is how many of these tiles are fixed under each face symmetry in \(g \in D_3 = \langle r, f \mid r^3 = f^2 = (rf)^2 = e \rangle\), which we denote \(t_g\). All of these tiles are fixed under the identity \(e \in D_3\), one of them is fixed under horizontal reflection \(f \in D_3\), and none of them are fixed under \(r \in D_3\). Thus \(\tid{} = 3\), \(\tf{} = 1\), and \(\tr{} = 0\). In \Cref{fig:tilings}, we give an example of two tilings that are the same up to a rotation of the tetrahedron. In \Cref{sec:tetrahedral}, we enumerate the six distinct tilings of the tetrahedron that can arise from this set of tile designs.

\begin{figure}[h!tbp]
    \centering
    \begin{minipage}[b]{0.3\textwidth}
        \centering
        \begin{tikzpicture}
            \upTri{0}{0}{0}
            \upTri{1}{1}{240}
            \upTri{2}{0}{0}
            \upTri{1}{1/3}{180}
            \node[fill=white, circle, draw=black] at (0,0) {1};
            \node[fill=white, circle, draw=black] at (4,0) {1};
            \node[fill=white, circle, draw=black] at (2,{2*sqrt(3)}) {1};
            \node[fill=white, circle, draw=black] at (1,{1*sqrt(3)}) {2};
            \node[fill=white, circle, draw=black] at (2,0) {4};
            \node[fill=white, circle, draw=black] at (3,{1*sqrt(3)}) {3};
        \end{tikzpicture}
        \subcaption{}\label{fig:tiling_1}
    \end{minipage}
    \qquad
    \begin{minipage}[b]{0.3\textwidth}
        \centering
        \begin{tikzpicture}
            \upTri{0}{0}{240}
            \upTri{1}{1}{0}
            \upTri{2}{0}{0}
            \upTri{1}{1/3}{180}
            \node[fill=white, circle, draw=black] at (0,0) {2};
            \node[fill=white, circle, draw=black] at (4,0) {2};
            \node[fill=white, circle, draw=black] at (2,{2*sqrt(3)}) {2};
            \node[fill=white, circle, draw=black] at (1,{1*sqrt(3)}) {1};
            \node[fill=white, circle, draw=black] at (3,{1*sqrt(3)}) {4};
            \node[fill=white, circle, draw=black] at (2,0) {3};
        \end{tikzpicture}
        \subcaption{}\label{fig:tiling_2}
    \end{minipage}
    \caption{Two tetrahedral nets corresponding to the same face-tiling.}
    \label{fig:tilings}
\end{figure}

We begin by formally describing how the isometries of the polyhedron can induce isometries of the faces.
\begin{definition}
  Let \(\mathcal A\) be a (subgroup of) the isometry group of a polyhedron with the natural group action on the faces of the polyhedron. If \(F\) is a face and \(\mathcal{A}_F\) is the stabilizer subgroup of \(\mathcal{A}\) with respect to that face, we say that the \textbf{induced symmetry group} \(G\) on the face \(F\) is the group of isometries of the face that can be realized by the action of \(\mathcal{A}_F\).
  We say that the \textbf{induced symmetry} on \(F\) by \(A \in \mathcal{A}_F\) is the isometry of \(F\) that results from the group action of \(A\) on the vertices of \(F\).
\end{definition}
For example, the isometries of a truncated icosahedron (the classic soccer ball shape) can induce a \(120^\circ\) rotation about the center of a hexagonal face, but not a \(60^\circ\) rotation. The induced symmetry group on the hexagonal faces by the polyhedral symmetry group is isomorphic to the dihedral group of the triangle, \(D_3\).

\begin{definition}
  Given a polyhedron with a set of faces \(\mathcal F\) and a symmetry group \(\mathcal A\), a \textbf{set of tile designs} is a set \(\mathcal T\) together with a group action of \(\mathcal A\) on \(\mathcal T \times \mathcal F\).
  A \textbf{tiling} of a polyhedron is a map \(\mathcal F \to \mathcal T\) which assigns a tile design to every face.
\end{definition}
In this paper, all of the polyhedra are face-transitive, so we can place any tile design on any face. In non-face-transitive contexts such as Archimedean solids, we must also specify a compatibility condition on the mappings. (We see an example of this in \Cref{subsec:Archimedean})

When enumerating the face tilings of polyhedra, we do not need to describe the set of tile designs themselves, but only the symmetries of the tiles under their induced symmetry group.
\begin{definition}
  Let \(\mathcal T\) be a set of tile designs, and let \(G\) be the induced symmetry of a face \(F\). For each \(g \in G\), there are \(t_g\) elements \(T \in \mathcal T\) such that \(g \cdot (T,F) = (T,F)\).
\end{definition}

Since the polyhedra are face transitive, we can conjugate to see that \(t_g\) does not depend on the choice of \(F\).

\subsection{Computation}
We use the orbit-counting theorem (Burnside's lemma) to count the number of tilings up to isometry by enumerating the tilings that are fixed under each isometry of each polyhedron.

\begin{theorem}[Orbit-counting theorem]
  \label{thm:orbitCountingTheorem}
  Let \(\mathcal A\) be a group, and let \(X\) be a finite set with a \(\mathcal A\)-action. Then the cardinality of \(X\) up to the action of \(\mathcal A\) is \[
    |X/\mathcal A| = \frac{1}{|\mathcal A|}\sum_{A \in \mathcal A} \left|X^A\right|,
  \] where \(|X^A|\) is the number of elements of \(X\) that are fixed under \(\mathcal A\).
\end{theorem}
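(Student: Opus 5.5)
We prove \Cref{thm:orbitCountingTheorem} by counting the set of fixed pairs
\[
  S = \{(A,x) \in \mathcal A \times X : A\cdot x = x\}
\]
in two ways. Throughout, \(\mathcal A\) is assumed finite, as it is for every symmetry group in this paper; otherwise \(1/|\mathcal A|\) makes no sense. Note also that \(X^A\) should be read as the set of elements fixed by the single element \(A\), not by all of \(\mathcal A\).

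\textbf{First count, by group elements.} Summing over \(A\) gives
\[
  |S| = \sum_{A\in\mathcal A} |X^A|.
\]

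\textbf{Second count, by points.} Summing over \(x\) gives
\[
  |S| = \sum_{x\in X} |\mathcal A_x|,
\]
where \(\mathcal A_x\) is the stabilizer of \(x\).

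\textbf{Orbit--stabilizer.} The map \(A\mathcal A_x \mapsto A\cdot x\) is a bijection from the left cosets of \(\mathcal A_x\) onto the orbit \(\mathcal A\cdot x\).
\begin{itemize}
  \item It is well defined and injective, because \(A\cdot x = B\cdot x\) holds exactly when \(B^{-1}A \in \mathcal A_x\).
  \item It is surjective by the definition of the orbit.
\end{itemize}
Lagrange's theorem then gives \(|\mathcal A_x| = |\mathcal A|/|\mathcal A\cdot x|\). This is the only step with real content, and it is standard.

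\textbf{Summing over orbits.} Substituting into the second count,
\[
  \sum_{x\in X}|\mathcal A_x| = |\mathcal A|\sum_{x\in X}\frac{1}{|\mathcal A\cdot x|}.
\]
Now group the right-hand sum by orbits. The orbits partition \(X\), since they are the equivalence classes of \(x\sim A\cdot x\). Each orbit \(\mathcal O\) therefore contributes \(|\mathcal O|\cdot\frac{1}{|\mathcal O|}=1\) to the sum. Hence
\[
  |S| = |\mathcal A|\,|X/\mathcal A|.
\]

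\textbf{Conclusion.} Equating this with the first count and dividing by \(|\mathcal A|\) yields the stated formula.
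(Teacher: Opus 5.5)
The paper gives no proof of this statement: it quotes the orbit-counting theorem (Burnside's lemma) as a standard tool and only applies it. Your double count of the fixed pairs \((A,x)\), combined with orbit--stabilizer and Lagrange, is the standard textbook proof, and it is correct and complete. Your two caveats are right and worth keeping. The statement needs \(\mathcal A\) to be finite for \(1/|\mathcal A|\) to make sense. Also, \(X^A\) must mean the points fixed by the single element \(A\), so ``fixed under \(\mathcal A\)'' in the statement is a slip; this is how the paper uses it, computing \(|X^A|\) separately for each conjugacy class.
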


In the case of Platonic and Catalan solids, we describe an explicit embedding of the vertices into \(\mathbb{R}^3\) along with a compatible matrix representation of the underlying symmetry group of the polyhedron. We use this to compute data about both the induced symmetry group on the faces, along with the cycle structure of the permutations of faces resulting from the action of the polyhedral isometry group. In the case of bipyramids and trapezohedra, we describe explicit bijections with tilings of square cylindrical grids.

The face tilings we enumerate fall into four groupings:
in \Cref{sec:tetrahedral}, we count polyhedra with tetrahedral symmetry;
in \Cref{sec:octahedral}, we count polyhedra with octahedral symmetry;
in \Cref{sec:icosahedral}, we count polyhedra with icosahedral symmetry;
and in \Cref{sec:dihedral}, we count bipyramids and trapezohedra, which have three-dimensional dihedral symmetry (\(D_{nh}\) and \(D_{nd}\) respectively).

Because all of the Platonic and Catalan solids have tetrahedral, octahedral, or icosahedral symmetry, we describe matrix representations of the corresponding Coxeter groups. For each polyhedron, we give an explicit set of points \(\vec{v}_i \in \mathbb{R}^3\), which generate the set of vertices of each polygon when acted on by the symmetry group. This matrix representation and the corresponding polyhedral construction are used to determine how each of the polyhedron's symmetries permute the vertices, and in turn how the symmetries permute, rotate, or reflect the faces.

When the cardinality \(n\) of the face orbit is less than the order of the cyclic subgroup \(\langle A \rangle\), the face lands back on top of itself under the action of \(A^n\), transformed by a (non-identity) induced symmetry of the face.

Thus, if a tiling is fixed under \(A\), the tilings of the faces in each orbit are entirely determined by the tile design of any one face in the orbit, and moreover, that tile design must be fixed under the face symmetry induced by \(A^n\).

This data is recorded in a table where each row corresponds to a conjugacy class of the polyhedral isometry group, and the columns give data about the permutations of the faces and the symmetry of the face induced by \(A^n\).

\begin{note}
  So that the reader can reproduce the table, we choose the representative of each conjugacy class to be the one that is lexicographically earliest by value when read by rows as a matrix.
  Also, the vertices and faces of the polyhedron are given canonical names. Each orbit of vertices is generated by  \(v\), \(v'\), \(v''\), and so on. Within each orbit, the vertices are indexed in lexicographic order by value.
  Each face is described as a list of vertices in positively oriented order (i.e., counterclockwise from the outside) where the list is cyclically shifted so that the lexicographically minimal vertex occurs first. The faces are similarly indexed in lexicographic order.
\end{note}

\begin{example}
  Here we explicitly work through an example of the permutations and induced symmetries of the faces of the icosahedron under its isometry group, which has ten conjugacy classes (enumerated in Table \ref{tabl:IcosahedralGroupConjugacyClasses}).
  Of these ten, seven conjugacy classes have the property that for all \(A\) in the conjugacy class, the cyclic subgroup \(\langle A \rangle\) acts freely on the faces.

  For each of the three remaining conjugacy classes, there is some \(n\) such that \(A^n\) fixes a face and thus induces a (non-trivial) symmetry of that face. We record the relevant data for these three conjugacy classes in the following table, which is a copy of Table \ref{tabl:IcosahedronConjugacy}:
  \[
    \begin{tabular}{P{1.7cm}P{1.6cm}lP{3.4cm}P{1.6cm}}
      conjugacy class  & cycle structure & non-maximal faces & vertex permutation generator & induced subgroup
      \\ \hline & & & & \\[-6pt]
      \multirow{2}{*}{\(\mathcal C^I_3\)} &
      \multirow{2}{*}{\((3^6,1^2)\)} &
      \(F_7 = (v_2,v_6,v_8)\) &
      \((v_2\ v_6\ v_8)\) &
      \(\langle r \rangle\)
      \\
      & &
      \(F_{13} = (v_5,v_7,v_{11})\) &
      \((v_5\ v_{11}\ v_7)\) &
      \(\langle r \rangle\)
      \\[3pt] \hline & & & & \\[-9pt]
      \multirow{2}{*}{\(\mathcal C^I_7\)} &
      \multirow{2}{*}{\((6^3,2)\)} &
      \(F_2 = (v_1,v_3,v_2)\) &
      \((v_1\ v_3\ v_2)\) &
      \(\langle r \rangle\)
      \\
      & &
      \(F_{20} = (v_{10},v_{12},v_{11})\) &
      \((v_{10}\ v_{11}\ v_{12})\) &
      \(\langle r \rangle\)
      \\[3pt] \hline & & & & \\[-9pt]
      \multirow{4}{*}{\(\mathcal C^I_8\)} &
      \multirow{4}{*}{\((2^8,1^4)\)} &
      \(F_9 = (v_3, v_5, v_9)\) &
      \((v_3\ v_9)(v_5)\) &
      \(\langle f \rangle\)
      \\
      & &
      \(F_{10} = (v_3, v_9, v_6)\) &
      \((v_3\ v_9)(v_6)\) &
      \(\langle f \rangle\)
      \\
      & &
      \(F_{11} = (v_4, v_8, v_{10})\) &
      \((v_4\ v_{10})(v_8)\) &
      \(\langle f \rangle\)
      \\
      & &
      \(F_{12} = (v_4, v_{10}, v_7)\) &
      \((v_4\ v_{10})(v_7)\) &
      \(\langle f \rangle\)
    \end{tabular}
  \]

  We will work through using the second row of the table to compute the number of tilings fixed by an element \(A \in \mathcal{C}_7^I\), which is (according to \Cref{tabl:IcosahedralGroupConjugacyClasses}) a \(60^\circ\) rotation of one of the triangular faces of the icosahedron followed by a reflection over the equator. Because the cycle structure indicated in the second column is \((6^3, 2)\), this means that there is one orbit of order \(2\), and thus \(A^2\) fixes (and induces a non-identity symmetry of) both faces in the orbit. By convention, we take \(A\) to be the lexicographically minimal matrix representative, and we use a computer to compute that \(A^2\) fixes the faces \(F_2\) and \(F_{20}\), shown in the third column. By tracking the vertices corresponding to these faces, we see \(A^2\) acts by sending \(v_1 \mapsto v_3\), \(v_3 \mapsto v_2\), and \(v_2 \mapsto v_1\), shown in the fourth column. We see that this induced symmetry is \(r \in D_3\), namely a \(120^\circ\) rotation of the equilateral triangular face, which is recorded in the final column.

  This gives us all of the information needed to compute the number of tilings of the faces of the icosahedron that are fixed under \(A\); in particular, there is one orbit of size \(2\) which is determined by a tile design that is fixed under \(120^\circ\) rotation, and the other \(18\) faces come in three orbits of size \(6\). Therefore, the number of tilings fixed by \(A\) is given by \(|X^A| = \tid3 \tr{}\), where we have three free choices of tile design for the three orbits of size \(|A| = 6\), and because the orbit of size \(2\) must be specified with a rotationally symmetric tile design, we have \(t_r\) choices for that orbit.

  By continuing this with all of the conjugacy classes and computing a weighted average with respect to the size of the conjugacy classes, we determine the number of face tilings up to symmetry by the orbit-counting theorem.
\end{example}

\subsection{On-Line Encyclopedia of Integer Sequences}
Ultimately, we contribute twelve new sequences and recover sixteen existing sequences from the On-Line Encyclopedia of Integer Sequences (OEIS) related to \(n\)-colorings of the faces of various face-transitive polyhedra (and equivalently the colorings of the vertices of their polyhedral duals).

The twelve sequences that the authors added to the OEIS are
\oeis{A378473} (tetrakis hexahedron, \(O_h\)),
\oeis{A378474} (disdyakis dodecahedron, \(O_h\)),
\oeis{A378475} (deltoidal icositetrahedron, \(O\); pentagonal icositetrahedron, \(O\); tetrakis hexahedron, \(O\); tetrakis hexahedron, \(T_d\); triakis octahedron, \(O\)),
\oeis{A378476} (triakis icosahedron, \(I_h\); pentakis dodecahedron \(I_h\); deltoidal hexecontahedron, \(I_h\)),
\oeis{A378477} (disdyakis triacontahedron, \(I_h\)),
\oeis{A378478} (pentagonal hexecontahedron, \(I\); triakis icosahedron, \(I\); pentakis dodecahedron \(I\); deltoidal hexecontahedron, \(I\)),
\oeis{A395240} (bipyramids, \(D_{nh}\)),
\oeis{A396858} (pyritohedron, \(T_h\)),
\oeis{A396861} (truncated icosahedron, \(I\)),
\oeis{A396913} (trapezohedra, \(D_{nd}\)),
\oeis{A396986} (disdyakis dodecahedron, \(O\)), and
\oeis{A396987} (triakis tetrahedron, \(T\); pyritohedron, \(T\)).

The sixteen sequences that are recovered from the OEIS are
\oeis{A000332} (tetrahedron, \(T_d\)),
\oeis{A000543} (octahedron, \(O\)),
\oeis{A000545} (dodecahedron, \(I\)),
\oeis{A006008} (tetrahedron, \(T\)),
\oeis{A047780} (cube, \(O\)),
\oeis{A054472} (icosahedron, \(I\)),
\oeis{A060530} (rhombic dodecahedron, \(O\); triakis tetrahedron, \(T_d\)),
\oeis{A128766} (octahedron, \(O_h\)),
\oeis{A198833} (cube, \(O_h\)),
\oeis{A199406} (rhombic dodecahedron, \(O_h\)),
\oeis{A252704} (icosahedron, \(I_h\)),
\oeis{A252705} (dodecahedron, \(I_h\)),
\oeis{A274900} (rhombicuboctahedron, \(O_h\)),
\oeis{A282670} (rhombic triacontahedron, \(I\)),
\oeis{A316093} (truncated octahedron, \(O\)), and
\oeis{A337963} (rhombic triacontahedron, \(I_h\)).

\section{Polyhedra with tetrahedral symmetry}
\label{sec:tetrahedral}
There is one Platonic solid (the tetrahedron) and one Catalan solid (the triakis tetrahedron) with tetrahedral symmetry, both of which are described by the full tetrahedral group \(T_d\). While the Platonic and Catalan solids are the main focus of this paper, there are other face-transitive polyhedra with tetrahedral symmetry, such as the pyritohedron, whose symmetry group is the pyritohedral group \(T_h\). Additionally, the polyhedra with octahedral symmetry in \Cref{sec:octahedral} and with icosahedral symmetry in \Cref{sec:icosahedral} each have a symmetry group with a tetrahedral subgroup.

The full tetrahedral group \(T_d\) of order \(24\) is the Coxeter group with generators \(R_0\), \(R_1\), and \(R_2\) and group presentation \[
  T_d = \langle
    R_0, R_1, R_2 \mid R_0^2 = R_1^2 = R_2^2 = (R_0R_1)^3 = (R_1R_2)^3 = (R_0R_2)^2 = 1
  \rangle.
\]

We use the following matrix representation:
\[
  R_0 = \begin{bmatrix}0&1&0 \\ 1&0&0 \\ 0&0&1 \end{bmatrix}\!\!,
  \ R_1 = \begin{bmatrix} 1&0&0 \\ 0&0&1 \\ 0&1&0 \end{bmatrix}\!\!,
  \text{ and}
  \ R_2 = \begin{bmatrix} 0&-1&0 \\ -1&0&0 \\ 0&0&1 \end{bmatrix}\!\!,
\]
and we note that the rotational tetrahedral group \(T = \{A \in T_d \mid \det(A) = 1\}\) is the order \(12\) subgroup of elements with positive determinant. The conjugacy classes of both \(T\) and \(T_d\) are described in Table \ref{tabl:TetrahedralGroupConjugacyClasses}.

\begin{table}
\[
\begin{tabular}{lllll}
  name & description & representative & size & order \\ \hline
  \(C_1^T\) & Identity & \(\mathrm{I}\) & 1 & 1 \\
  \(C_2^T\) & Rotation of a face by \(120^\circ\) & \(R_0R_1\) & \(8\) & \(3\) \\
  \(C_3^T\) & Rotation of an edge by \(180^\circ\) & \(R_0R_2\) & \(3\) & \(2\) \\
  \hline
  \(C_4^T\) & Reflection across a face & \(R_0\) & \(6\) & \(2\) \\
  \(C_5^T\) & Rotoreflection of an edge by \(90^\circ\) & \(R_0R_1R_2\) & \(6\) & \(4\)
\end{tabular}
\]
\caption{
  The five conjugacy classes of the full tetrahedral group \(T_d\), with a description of how an element in the conjugacy class acts on a tetrahedron. The first three rows are also the conjugacy classes of the rotational tetrahedral group \(T\).
}
\label{tabl:TetrahedralGroupConjugacyClasses}
\end{table}

\subsection{Tetrahedron}
The tetrahedron, illustrated in \Cref{fig:tetrahedron}, is a Platonic solid with \(4\) equilateral triangular faces. Its \(4\) vertices are generated by the orbit of \(v = (1,1,1)\) under its isometry group, the full tetrahedral group \(T_d\) of order \(24\).

\par\medskip\noindent
\begin{minipage}{\textwidth}\captionsetup{type=figure}
  \centering
    \begin{subfigure}[b]{0.35\textwidth}
      \centering
      \includegraphics[scale=0.5]{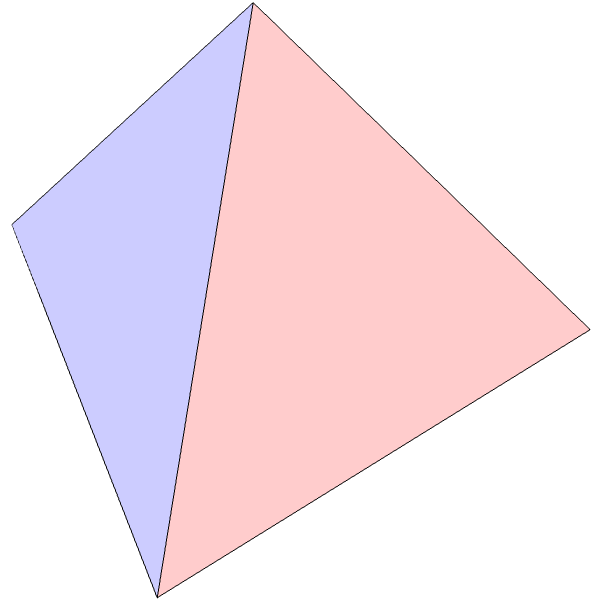}
      \caption{}\label{subfig:tetrahedron}
    \end{subfigure}
    \begin{subfigure}[b]{0.35\textwidth}
      \centering
      \includegraphics[scale=0.5]{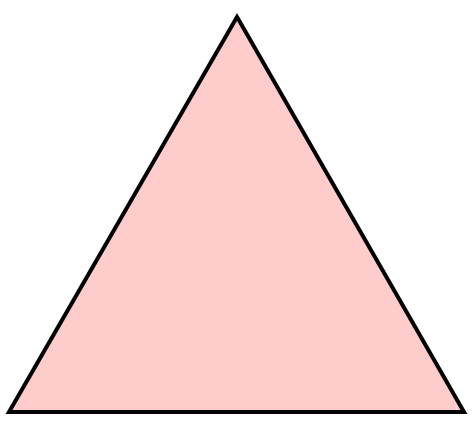}
      \caption{}\label{subfig:tetrahedronFace}
    \end{subfigure}
    \caption[Illustrations of the tetrahedron and its faces.]{
      Illustrations of (a) the tetrahedron with one face highlighted and (b) a face of the tetrahedron.
    }
    \label{fig:tetrahedron}
\end{minipage}
\par\medskip

The induced symmetry group is the dihedral group of the triangle,  \(D_3 = \langle r, f \mid r^3 = f^2 = (rf)^2 = e \rangle\).

\begin{theorem}
  The number of ways of tiling the tetrahedron up to full tetrahedral symmetry \((T_d)\) with \(\tid{}\) tiles, \(\tf{}\) of which are fixed under horizontal reflection and \(\tr{}\) of which are fixed under \(120^\circ\) rotation, is
  \[
    \frac{1}{24}
    \left(
      \tid4 +
      8\tid{}\tr{} +
      3\tid2 +
      6\tid{}\tf2 +
      6\tid{}
    \right).
  \]
\end{theorem}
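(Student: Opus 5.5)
We apply the orbit-counting theorem (\Cref{thm:orbitCountingTheorem}) with \(\mathcal A = T_d\) acting on the set \(X\) of all tilings \(\mathcal F \to \mathcal T\) of the four faces. Since \(|T_d| = 24\), it suffices to compute \(|X^A|\) for one representative \(A\) of each of the five conjugacy classes in \Cref{tabl:TetrahedralGroupConjugacyClasses}, multiply by the class size, and sum. The governing principle, stated before the icosahedron example, is this. If \(A\) permutes the faces in cycles, a fixed tiling is determined by one tile design per cycle. For a cycle of length \(n\), that design must be fixed by the symmetry of the face induced by \(A^n\). So each cycle contributes a factor \(t_g\), where \(g\) is the induced symmetry of \(A^n\) on a face of the cycle.

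We identify faces with the opposite vertices and go through the classes. The identity fixes all four faces trivially, giving \(\tid4\). A \(120^\circ\) face rotation (\(C_2^T\), size 8) fixes one face, rotating it by \(120^\circ\), and cyclically permutes the other three; here \(A^3 = \id\), so that 3-cycle contributes \(\tid{}\), and the class contributes \(8\tid{}\tr{}\). A \(180^\circ\) edge rotation (\(C_3^T\), size 3) swaps the faces in two 2-cycles, and \(A^2 = \id\), giving \(3\tid2\). A reflection (\(C_4^T\), size 6) fixes the mirror plane, which contains one edge and the midpoint of the opposite edge. It therefore fixes two faces, each reflected across an altitude, and swaps the other two, giving \(6\tid{}\tf2\). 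Since all reflections of \(D_3\) are conjugate, \(t_f\) is the right count regardless of which axis is used. Finally, a \(90^\circ\) rotoreflection (\(C_5^T\), size 6) cycles all four faces, and \(A^4 = \id\), giving \(6\tid{}\).

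Summing and dividing by \(24\) yields the formula. The main care point is verifying the face cycle structures and the induced symmetries, especially for the reflections and rotoreflections. I would check these directly with the matrices: take \(R_0\), which swaps the first two coordinates, acting on the vertices \((1,1,1),(1,-1,-1),(-1,1,-1),(-1,-1,1)\). It fixes \((1,1,1)\) and \((-1,-1,1)\) and swaps the other two. So two faces are fixed with a transposition of their vertices, which is a reflection, and two faces are swapped. Similarly, \(R_0R_1R_2\) acts as a 4-cycle on the vertices, hence on the faces.
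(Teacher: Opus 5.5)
Your proposal is correct and takes essentially the same route as the paper. Both apply the orbit-counting theorem over the five conjugacy classes of \(T_d\), reading off the face-cycle structure and the induced face symmetry of \(A^n\) on each cycle. The only difference is presentational: the paper groups the identity, edge rotations, and rotoreflections into a single free-action case giving \(t_{\id}^{4/|A|}\), while you treat each class separately and add an explicit matrix check for the reflection and rotoreflection.
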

\begin{proof}
  We proceed by cases, following \Cref{tabl:TetrahedronConjugacy}.
  \begin{description}
    \item[Case 1.] If \(A \in C_{2}^T\), which is the conjugacy class consisting of \(120^\circ\) rotations, then there is \(1\) face that is fixed (rotated) under the action of \(A\). The remaining \(3\) faces appear in the same orbit under \(\langle A \rangle\), the cyclic subgroup generated by \(A\). Thus, there are \(\tid{}\tr{}\) face tilings fixed under \(A \in C_{2}^T\).
    \item[Case 2.] If \(A \in C_{4}^T\), which is the conjugacy class consisting of reflections, then there are \(2\) faces that are fixed (reflected) under the action of \(A\), and the remaining faces appear in the same orbit under \(\langle A \rangle\). Thus, there are \(\tid{}\tf{2}\) face tilings fixed under \(A \in C_{4}^T\).
    \item[Case 3.] For the remaining elements, \(A \in T_d \setminus \left(C_{2}^T \cup C_{4}^T\right)\), which are elements of conjugacy classes that do not appear in the table, the action of \(\langle A \rangle\) on the faces is free. Thus, each remaining group element partitions the \(4\) faces into parts of size \(|A|\), the order of the isometry. Thus, there are \(t_{\id}^{4/|A|}\) tilings fixed under \(A\).
  \end{description}
  We then weight the average by the size of each conjugacy class, as shown in \Cref{tabl:TetrahedralGroupConjugacyClasses}.
\end{proof}
\begin{table}
  \[
  \begin{tabular}{P{1.7cm}P{1.6cm}lP{3.4cm}P{1.6cm}}
    conjugacy class  & cycle structure & non-maximal faces & vertex permutation generator & induced subgroup
    \\ \hline & & & & \\[-6pt]
    \(\mathcal C^T_2\) &
    \((3,1)\) &
    \(F_1 = (v_1, v_2, v_3)\) &
    \((v_1\ v_2\ v_3)\) &
    \(\langle r \rangle\)
    \\[3pt] \hline & & & & \\[-9pt]
    \multirow{2}{*}{\(\mathcal C^T_4\)} &
    \multirow{2}{*}{\((2,1^2)\)} &
    \(F_1 = (v_1, v_2, v_3)\) &
    \((v_1)(v_2\ v_3)\) &
    \(\langle f \rangle\)
    \\
    & &
    \(F_4 = (v_2, v_4, v_3)\) &
    \((v_2\ v_3)(v_4)\) &
    \(\langle f \rangle\)
    \\[3pt] \hline & & & & \\[-9pt]
  \end{tabular}
  \]
  \caption{The cycle structure of the permutations of the vertices of a tetrahedron under each conjugacy class of \(T_d\).}
  \label{tabl:TetrahedronConjugacy}
\end{table}

\begin{corollary}
  The number of ways of tiling the tetrahedron up to rotational tetrahedral symmetry \((T)\) with \(\tid{}\) tiles, \(\tr{}\) of which are fixed under \(120^\circ\) rotation, is
  \[
    \frac{1}{12}
    \left(
      \tid4 +
      8\tid{}\tr{} +
      3\tid2
    \right).
  \]
\end{corollary}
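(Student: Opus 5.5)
We apply the orbit-counting theorem (\Cref{thm:orbitCountingTheorem}) to the rotational tetrahedral group \(T\), of order \(12\), acting on tilings. This is the same setup as the preceding theorem, except that the sum runs only over \(T\). Since \(T = \{A \in T_d \mid \det(A) = 1\}\), its elements are exactly the classes \(C_1^T\), \(C_2^T\), \(C_3^T\) of \Cref{tabl:TetrahedralGroupConjugacyClasses}, of sizes \(1\), \(8\), \(3\). So the plan is to reuse the fixed-tiling counts already computed for these classes in the proof of the theorem for \(T_d\), and simply drop the terms coming from \(C_4^T\) and \(C_5^T\).

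The key steps are as follows.
\begin{description}
  \item[Identity.] The identity fixes every tiling, contributing \(\tid4\).
  \item[Class \(C_2^T\).] By Case 1 of the previous proof and \Cref{tabl:TetrahedronConjugacy}, each \(120^\circ\) rotation fixes one face while rotating it by \(r\). It permutes the other three faces in a single \(3\)-cycle. Hence it fixes \(\tid{}\tr{}\) tilings, and the class contributes \(8\tid{}\tr{}\).
  \item[Class \(C_3^T\).] Each edge rotation by \(180^\circ\) acts freely on the faces with cycle structure \((2,2)\), by Case 3. Hence it fixes \(\tid2\) tilings, and the class contributes \(3\tid2\).
\end{description}
Summing these contributions and dividing by \(|T| = 12\) gives the stated formula.

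The only point needing care is that the induced symmetries used are legitimate for the subgroup \(T\). The definition of induced symmetry depends on the stabilizer in the chosen group, and within \(T\) the face stabilizer is \(\langle r \rangle \cong C_3\); no reflection is induced. Consequently \(\tf{}\) cannot appear, which is why the statement mentions only \(\tid{}\) and \(\tr{}\). Because the fixed-point counts for elements of \(T\) depend only on how each element acts on faces, not on which group contains it, the counts from the \(T_d\) case transfer unchanged. Beyond this check there is no real obstacle: the corollary is the previous theorem restricted to the determinant-one classes.
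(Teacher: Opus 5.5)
Your proposal is correct and matches the paper's implicit argument. The corollary comes from the \(T_d\) computation by keeping only the determinant-one classes \(C_1^T\), \(C_2^T\), \(C_3^T\), of sizes \(1\), \(8\), \(3\), with the same fixed-tiling counts \(\tid4\), \(\tid{}\tr{}\), \(\tid2\), and dividing by \(|T|=12\). One small point you could make explicit: the eight three-fold rotations induce either \(r\) or \(r^2\) on their fixed face, but a tile is fixed by \(r\) exactly when it is fixed by \(r^2\), so each of the eight contributes \(\tid{}\tr{}\).
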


\begin{example}
  If the set of tile designs is \(
    \left\{
      \begin{tikzpicture}[scale=0.7,baseline=5]
        \fill[blue!50!white] (0,0)--(1/2,{sqrt(3)/2})--(1,0)--(1/2,{sqrt(3)/6})--cycle;
        \draw[thick] (0,0)--(1/2,{sqrt(3)/2})--(1,0)--cycle;
    \end{tikzpicture},
    \begin{tikzpicture}[scale=0.7,baseline=5]
      \fill[blue!50!white] (0,0)--(1/2,{sqrt(3)/2})--(1/2,{sqrt(3)/6})--(1,0)--cycle;
      \draw[thick] (0,0)--(1/2,{sqrt(3)/2})--(1,0)--cycle;
    \end{tikzpicture},
    \begin{tikzpicture}[scale=0.7,baseline=5]
      \fill[blue!50!white] (0,0)--(1/2,{sqrt(3)/6})--(1/2,{sqrt(3)/2})--(1,0)--cycle;
      \draw[thick] (0,0)--(1/2,{sqrt(3)/2})--(1,0)--cycle;
  \end{tikzpicture}
    \right\}\!,
  \) and thus \(\tid{} = 3\), \(\tf{} = 1\) and \(\tr{} = 0\), we compute that there are \(
    \frac{1}{24}\left(3^4 + 8(3)(0) + 3(3^2) + 6(3) + 6(3)\right) = 6
  \) tilings of the tetrahedron up to \(T_d\), which are illustrated in Figure \ref{fig:t6}.
\end{example}
\begin{figure}[h!tbp]
    \centering
    \begin{minipage}[b]{0.16\textwidth}
        \centering
        \begin{tikzpicture}[scale=0.6]
            \upTri{1}{1/3}{60}
            \upTri{2}{0}{240}
            \upTri{0}{0}{240}
            \upTri{1}{1}{240}
        \end{tikzpicture}
        \subcaption{}\label{fig:t6_1}
    \end{minipage}
    \hfill
    \begin{minipage}[b]{0.16\textwidth}
        \centering
        \begin{tikzpicture}[scale=0.6]
            \upTri{1}{1/3}{60}
            \upTri{2}{0}{0}
            \upTri{0}{0}{240}
            \upTri{1}{1}{0}
        \end{tikzpicture}
        \subcaption{}\label{fig:t6_2}
    \end{minipage}
    \hfill
    \begin{minipage}[b]{0.16\textwidth}
        \centering
        \begin{tikzpicture}[scale=0.6]
            \upTri{1}{1/3}{60}
            \upTri{2}{0}{240}
            \upTri{0}{0}{120}
            \upTri{1}{1}{0}
        \end{tikzpicture}
        \subcaption{}\label{fig:t6_3}
    \end{minipage}
    \hfill
    \begin{minipage}[b]{0.16\textwidth}
        \centering
        \begin{tikzpicture}[scale=0.6]
            \upTri{1}{1/3}{60}
            \upTri{2}{0}{240}
            \upTri{0}{0}{120}
            \upTri{1}{1}{120}
        \end{tikzpicture}
        \subcaption{}\label{fig:t6_4}
    \end{minipage}
    \hfill
    \begin{minipage}[b]{0.16\textwidth}
        \centering
        \begin{tikzpicture}[scale=0.6]
            \upTri{1}{1/3}{60}
            \upTri{2}{0}{240}
            \upTri{0}{0}{120}
            \upTri{1}{1}{240}
        \end{tikzpicture}
        \subcaption{}\label{fig:t6_5}
    \end{minipage}
    \hfill
    \begin{minipage}[b]{0.16\textwidth}
        \centering
        \begin{tikzpicture}[scale=0.6]
            \upTri{1}{1/3}{60}
            \upTri{2}{0}{0}
            \upTri{0}{0}{120}
            \upTri{1}{1}{0}
        \end{tikzpicture}
        \subcaption{}\label{fig:t6_6}
    \end{minipage}
    \caption{Nets of the six essentially different face-tilings of the tetrahedron up to rotation and reflection by equilateral triangular tiles with reflectional but not rotational symmetry. The orbits under the full tetrahedral group \(T_d\) have size \(3\), \(6\), \(12\), \(12\), \(24\), and \(24\) respectively.}
    \label{fig:t6}
\end{figure}
\begin{oeis*}
  The number of \(n\)-colorings of the faces of the tetrahedron up to the \(24\) symmetries of the tetrahedral group \(T_d\) appears in the OEIS as \(\oeis{A000332}(n+3) = \binom{n+3}{4}\): \[
    1, 5, 15, 35, 70, 126, 210, 330, 495, 715, 1001, 1365, 1820, 2380, 3060, \ldots,
  \]
  and the number of \(n\)-colorings up to rotational tetrahedral symmetry \(T\) appears in the OEIS as \oeis{A006008} (\(0\)-indexed): \[
    0, 1, 5, 15, 36, 75, 141, 245, 400, 621, 925, 1331, 1860, 2535, 3381, 4425, \ldots.
  \]
\end{oeis*}

\subsection{Triakis tetrahedron}
The triakis tetrahedron, illustrated in \Cref{fig:triakisTetrahedron}, is a Catalan solid with \(12\) isosceles triangular faces and is the polyhedral dual of the truncated tetrahedron. Its eight vertices are generated by the orbits of the points \(v = (1,1,1)\) and \(v' = (-3/5,-3/5,-3/5)\) under the action of its isometry group, the full tetrahedral group \(T_d\) of order \(24\), and both orbits have size \(4\).

\par\medskip\noindent
\begin{minipage}{\textwidth}\captionsetup{type=figure}
  \centering
    \begin{subfigure}[b]{0.35\textwidth}
      \centering
      \includegraphics[scale=0.5]{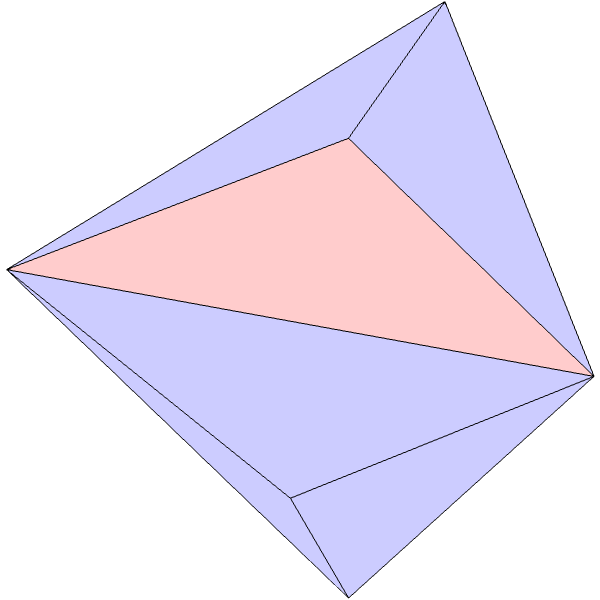}
      \caption{}
    \end{subfigure}
    \begin{subfigure}[b]{0.45\textwidth}
      \centering
      \includegraphics[scale=0.5]{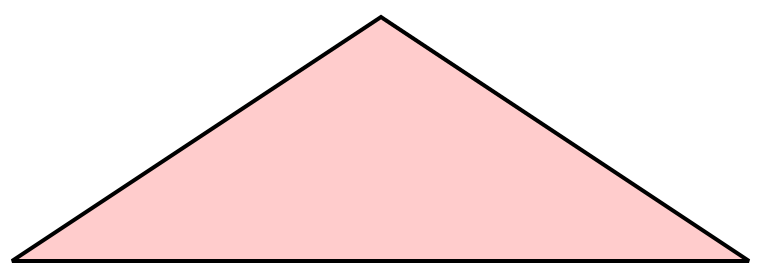}
      \caption{}
    \end{subfigure}
    \caption[Illustrations of the triakis tetrahedron and its faces.]{
      Illustrations of (a) the triakis tetrahedron with one face highlighted and (b) a face of the triakis tetrahedron.
    }
    \label{fig:triakisTetrahedron}
\end{minipage}
\par\medskip

The induced symmetry group is the order-\(2\) reflection group of the isosceles triangle \(\langle f \mid f^2 = e \rangle\).

\begin{theorem}
  \label{thm:TriakisTetrahedron}
  The number of ways of tiling the triakis tetrahedron up to full tetrahedral symmetry \((T_d)\) with \(\tid{}\) tiles, \(\tf{}\) of which are fixed under horizontal reflection, is given by the expression
  \begin{equation*}
    \frac{1}{24}
    \left(
      \tid{12} +
      8\tid4 +
      3\tid6 +
      6\tid5\tf2 +
      6\tid3
    \right).
  \end{equation*}
\end{theorem}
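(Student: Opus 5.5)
We apply the orbit-counting theorem (\Cref{thm:orbitCountingTheorem}) to the set \(X\) of all tilings of the twelve faces of the triakis tetrahedron, with \(\mathcal A = T_d\). The sum is organized by the five conjugacy classes of \Cref{tabl:TetrahedralGroupConjugacyClasses}, with sizes \(1,8,3,6,6\). For each class we determine the cycle structure of \(\langle A\rangle\) acting on the faces. For each cycle of length \(k\) strictly smaller than \(|A|\), we also determine the symmetry of the face induced by \(A^k\).

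For a fixed tiling, each cycle of faces contributes a factor of \(\tid{}\) when the induced symmetry is trivial, and a factor of \(\tf{}\) when it is the reflection \(f\). These are the only two options, since the induced group on an isosceles face is \(\langle f\rangle\).

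The key geometric fact is that each face of the triakis tetrahedron consists of one edge of the inner tetrahedron (two \(v\)-vertices) together with one apex \(v'\) lying over a face of the tetrahedron. So faces correspond bijectively to (edge, incident face) flags of the tetrahedron, and \(T_d\) acts on these \(12\) flags. A group element can map a face to itself only if it fixes both the edge and the apex. In that case it fixes the apex and swaps or fixes the two base vertices.
\begin{itemize}
\item A nontrivial rotation about a face axis (\(C_2^T\)) moves every edge. It acts freely, with cycle type \((3^4)\), contributing \(\tid4\).
\item An edge rotation by \(180^\circ\) (\(C_3^T\)) swaps the two faces adjacent to the fixed edge. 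It acts freely, with cycle type \((2^6)\), contributing \(\tid6\).
\item A rotoreflection of order \(4\) (\(C_5^T\)) has square equal to an edge rotation, which acts freely. Hence it acts freely with cycle type \((4^3)\), contributing \(\tid3\).
\end{itemize}

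The main case is the reflections \(C_4^T\), which I would treat via the representative \(R_0\) and explicit coordinates, in the style of \Cref{tabl:TetrahedronConjugacy}. A reflection fixes one edge of the tetrahedron pointwise and swaps the other two vertices. It fixes the two apices over the faces containing those two swapped vertices' edge... more precisely, it fixes the faces of the tetrahedron through the opposite edge.

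The fixed triakis faces are therefore those whose base edge is the opposite edge and whose apex lies over one of the two faces containing it. That gives exactly \(2\) fixed faces, on each of which the reflection induces \(f\). The remaining \(10\) faces fall into \(5\) two-cycles, contributing \(\tid5\tf2\).

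Weighting by class sizes and dividing by \(24\) gives the stated formula.

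I expect the reflection case to be the only real obstacle. Checking that exactly two faces are fixed, and that the induced map on each really is the reflection rather than the identity, is best confirmed by tracking the vertex permutation of \(R_0\) on \(v\) and \(v'\) coordinates. As a sanity check, setting \(\tid{}=\tf{}=n\) should recover the OEIS sequence \oeis{A060530}.
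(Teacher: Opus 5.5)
Your proposal is correct and follows the paper's proof: orbit counting over the conjugacy classes of \(T_d\), in which the reflections \(C_4^T\) fix exactly two faces (inducing \(f\)) and pair up the other ten, while every other non-identity element acts freely, giving \(\tid5\tf2\) and \(t_{\id}^{12/|A|}\) respectively. The only difference is that you justify this cycle data conceptually, via the bijection between faces and edge--face flags of the inner tetrahedron, whereas the paper relies on its computer-generated \Cref{tabl:TriakisTetrahedronConjugacy}. Incidentally, your description of a fixed face (two swapped \(v\)'s with a fixed apex \(v'\)) is the one consistent with the stated coordinates, because that table has the roles of \(v\) and \(v'\) interchanged.
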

\begin{proof}
  We proceed by cases, following \Cref{tabl:TriakisTetrahedronConjugacy}.
  \begin{description}
    \item[Case 1.] If \(A \in C_{4}^T\), which is the conjugacy class consisting of reflections, then there are \(2\) faces that are fixed (reflected) under the action of \(A\), and the remaining \(10\) faces are partitioned into \(5\) parts of size \(|A| = 2\). Thus the number of face tilings fixed under \(A \in C_{4}^T\) is \(\tid{5}\tf{2}\).
    \item[Case 2.] For the remaining elements, \(A \in T_d \setminus C_{4}^T\), which are elements of conjugacy classes that do not appear in the table, the action of \(\langle A \rangle\) on the faces is free. Thus each remaining group element partitions the \(12\) faces into parts of size \(|A|\) and the number of face tilings fixed under \(A\) is \(t_{\id}^{12/|A|}\).
  \end{description}
\end{proof}

\begin{table}
  \[
  \begin{tabular}{P{1.7cm}P{1.6cm}lP{3.4cm}P{1.6cm}}
    conjugacy class  & cycle structure & non-maximal faces & vertex permutation generator & induced subgroup
    \\ \hline & & & & \\[-6pt]
    \multirow{2}{*}{\(\mathcal C^T_4\)} &
    \multirow{2}{*}{\((2^5,1^2)\)} &
    \(F_8 = (v'_2,v_4,v'_3)\) &
    \((v'_2\ v'_3)(v_4)\) &
    \(\langle f \rangle\)
    \\
    & &
    \(F_9 = (v'_2,v'_3,v_1)\) &
    \((v'_2\ v'_3)(v_1)\) &
    \(\langle f \rangle\)
  \end{tabular}
  \]
  \caption{The cycle structure of the permutations of the vertices of a triakis tetrahedron under each conjugacy class of \(T_d\).}
  \label{tabl:TriakisTetrahedronConjugacy}
\end{table}

\begin{corollary}
  \label{thm:TriakisTetrahedronRotations}
  The number of ways of tiling the triakis tetrahedron up to rotational tetrahedral symmetry \(T\) with \(\tid{}\) tiles is given by the expression
  \begin{equation*}
    \frac{1}{12}
    \left(
      \tid{12} +
      8\tid4 +
      3\tid6
    \right).
  \end{equation*}
\end{corollary}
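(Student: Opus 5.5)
This corollary follows by restricting the orbit-counting argument of \Cref{thm:TriakisTetrahedron} to the rotational subgroup \(T \subset T_d\). I apply \Cref{thm:orbitCountingTheorem} with \(\mathcal A = T\), which has order \(12\), acting on the set \(X\) of tilings. By \Cref{tabl:TetrahedralGroupConjugacyClasses}, the elements of \(T\) fall into three conjugacy classes:
\begin{itemize}
  \item \(C_1^T\), the identity, of size \(1\);
  \item \(C_2^T\), the \(120^\circ\) rotations, of size \(8\) and order \(3\);
  \item \(C_3^T\), the \(180^\circ\) edge rotations, of size \(3\) and order \(2\).
\end{itemize}
It therefore suffices to compute \(|X^A|\) for one representative of each class and take the weighted sum divided by \(12\).

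The key input is the case analysis already carried out in the proof of \Cref{thm:TriakisTetrahedron}. \Cref{tabl:TriakisTetrahedronConjugacy} lists only \(C_4^T\) as containing elements \(A\) for which some power \(A^n\) fixes a face and induces a non-identity symmetry of it. Every other class, in particular \(C_1^T\), \(C_2^T\) and \(C_3^T\), acts freely on the \(12\) faces via \(\langle A \rangle\). So for \(A \in T\), the faces split into \(12/|A|\) orbits of size \(|A|\). Each orbit's tile designs are determined by an arbitrary choice on one face, with no symmetry constraint, which gives \(|X^A| = t_{\id}^{12/|A|}\). This yields \(\tid{12}\) for the identity, \(\tid4\) for each of the \(8\) elements of order \(3\), and \(\tid6\) for each of the \(3\) elements of order \(2\).

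Summing with these weights and dividing by \(|T| = 12\) gives the stated formula. The only point needing care is the freeness claim for \(C_2^T\) and \(C_3^T\). A rotation fixing a face would have to fix that face's centroid, and hence the face's axis through the origin, setwise. The faces are scalene-free isosceles triangles whose only nontrivial symmetry is a reflection, so no rotation can induce a nontrivial symmetry of a face. It is also easy to check that no \(3\)-fold or \(2\)-fold axis of \(T_d\) passes through a face centroid of the triakis tetrahedron: the \(3\)-fold axes pass through the vertices \(v\) and \(v'\), and the \(2\)-fold axes pass through edge midpoints between pairs of \(v'\) vertices. I would verify this directly from the vertex coordinates, though it is already recorded in the table.
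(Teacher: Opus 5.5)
Your proposal is correct and follows the paper's route: the corollary is the orbit-counting computation of \Cref{thm:TriakisTetrahedron} restricted to the rotational classes $C_1^T$, $C_2^T$, $C_3^T$, each of which acts freely on the $12$ faces and contributes $t_{\id}^{12/|A|}$, averaged over $|T|=12$. (A cosmetic point: with the stated coordinates $v=(1,1,1)$, $v'=-\tfrac35(1,1,1)$, the degree-$6$ vertices are the $v$'s, so the $2$-fold axes meet midpoints of $v$--$v$ edges rather than $v'$--$v'$ edges. Your labeling follows the table, and in either case no rotation axis meets a face interior, which is all the argument needs.)
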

\begin{oeis*}
  The number of \(n\)-colorings of the faces of the triakis tetrahedron up to the \(12\) symmetries of the rotational tetrahedral group \(T\) has been added to the OEIS as sequence \oeis{A396987} (\(0\)-indexed): \[
    0, 1, 368, 44523, 1399296, 20349375, 181411056, 1153471613, 5726691328, \ldots,
  \]
  and the number of colorings up to the \(24\) symmetries of the tetrahedral group \(T_d\) appears in the OEIS as sequence \oeis{A060530} (\(0\)-indexed): \[
    0, 1, 218, 22815, 703760, 10194250, 90775566, 576941778, 2863870080, \ldots.
  \]
\end{oeis*}

\subsection{Pyritohedron}
While the rest of the paper considers only Platonic solids, Catalan solids, bipyramids, and trapezohedra, here we provide one example outside of these families in order to motivate counting tilings of polyhedra up to \textit{subgroups} of their symmetry groups.

The pyritohedron, illustrated in \Cref{fig:pyritohedron}, is (a family of polyhedra) topologically equivalent to a regular dodecahedron, but unlike the regular dodecahedron whose symmetry group is given by the full icosahedral group \(I_h\) of order \(120\), the symmetry group of the pyritohedron is the pyritohedral symmetry group \(T_h \le I_h\) of order \(24\).

\par\medskip\noindent
\begin{minipage}{\textwidth}\captionsetup{type=figure}
  \centering
    \begin{subfigure}[b]{0.35\textwidth}
      \centering
      \includegraphics[scale=0.5]{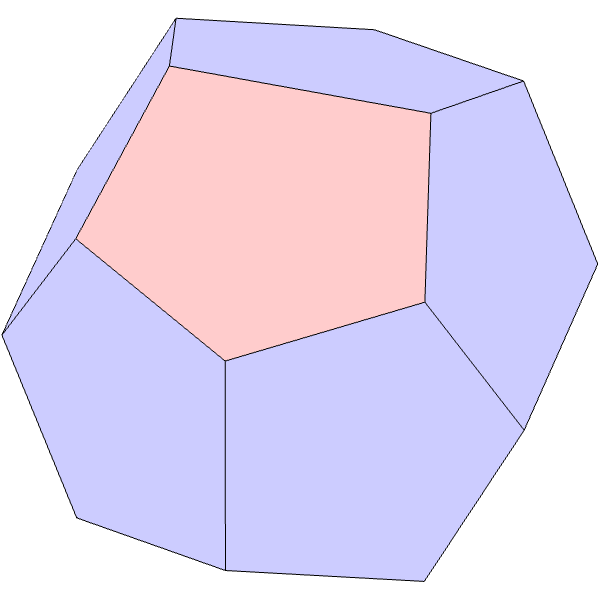}
      \caption{}\label{subfig:pyritohedron}
    \end{subfigure}
    \begin{subfigure}[b]{0.35\textwidth}
      \centering
      \includegraphics[scale=0.5]{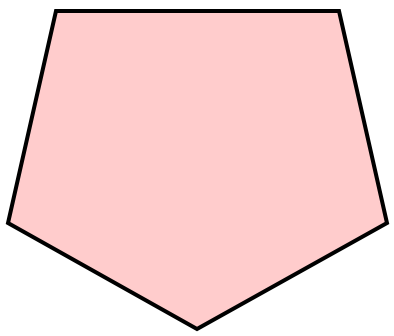}
      \caption{}\label{subfig:pyritohedronFace}
    \end{subfigure}
    \caption[Illustrations of the pyritohedron and its faces.]{
      Illustrations of (a) the pyritohedron with one face highlighted and (b) a face of the pyritohedron.
    }
    \label{fig:pyritohedron}
\end{minipage}
\par\medskip

Pyritohedral symmetry \(T_h\) is the subgroup of integer-valued matrices of the representation of the icosahedral group \(I_h\) described in \Cref{sec:icosahedral}, and can be generated by the matrices \[
  \begin{pmatrix}
    0 & 0 & 1 \\
    1 & 0 & 0 \\
    0 & 1 & 0
  \end{pmatrix} \quad\text{and}\quad
  \begin{pmatrix}
    1 & 0 & 0 \\
    0 & -1 & 0 \\
    0 & 0 & 1
  \end{pmatrix}.
\]

While pyritohedra are a family of polyhedra with one degree of freedom, we describe one example with integer coordinates, which is the pyritohedron that appears in the Weaire--Phelan structure \cite{Lautensack2008}. This pyritohedron has vertices which are generated by the orbits of the points \(v = (4,4,4)\) and \(v' = (0, 3, 6)\) under \(T_h\).

The pyritohedral symmetry group induces a reflectional symmetry \(\langle f \mid f^2 = e \rangle\) on the faces---moreover, this is the same symmetry group induced by \(T_h\) even in the case that the pyritohedron is the regular dodecahedron with regular pentagonal faces.

We present the following proposition without proof.
\begin{proposition}
  \label{thm:Pyritohedron}
  The number of ways of tiling the faces of the pyritohedron up to pyritohedral symmetry \((T_h)\) with \(\tid{}\) tiles, \(\tf{}\) of which are fixed under reflection, is given by the expression
  \begin{equation*}
    \frac{1}{24}
    \left(
      \tid{12} +
      3\tid6 +
      8\tid4 +
      3\tid4\tf4 +
      \tid6 +
      8\tid2
    \right).
  \end{equation*}
\end{proposition}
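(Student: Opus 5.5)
The plan is to apply the orbit-counting theorem (\Cref{thm:orbitCountingTheorem}) to the action of \(T_h\) on tilings, as in the proofs for the tetrahedron and \Cref{thm:TriakisTetrahedron}. First I will use the structure \(T_h = T \times \{\pm \mathrm{I}\}\): the given generators produce \(T\) (the cyclic coordinate permutation together with the sign changes of even parity), and also \(-\mathrm{I}\).

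The conjugacy data I expect to use is as follows:
\begin{itemize}
\item the identity (\(1\) element);
\item the \(180^\circ\) rotations about the coordinate axes (\(3\) elements);
\item the \(120^\circ\) rotations about the body diagonals (\(8\) elements, forming two classes of size \(4\) that can be treated together);
\item the central inversion \(-\mathrm{I}\) (\(1\) element);
\item the reflections in the coordinate planes, such as \(\mathrm{diag}(1,-1,1)\) (\(3\) elements);
\item the order-\(6\) rotoreflections \(-R\) with \(R\) a \(120^\circ\) rotation (\(8\) elements).
\end{itemize}
The class sizes \(1,3,8,1,3,8\) match the coefficients of the claimed formula. I will verify them by writing down the \(24\) matrices explicitly.

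Next I will describe the \(12\) faces through their centers. Using the vertices generated by \(v=(4,4,4)\) and \(v'=(0,3,6)\), each face consists of a ``roof'' edge \(\{(0,\pm3,6)\}\) (up to the \(T_h\)-action) together with three cube-type vertices. Hence the face centers are the \(T_h\)-orbit of a point of the form \((0,a,b)\) with \(0<a<b\): the \(12\) points \((0,\pm a,\pm b)\), \((\pm b,0,\pm a)\), \((\pm a,\pm b,0)\).

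I will then compute the cycle structure of each class on these centers:
\begin{itemize}
\item A \(180^\circ\) axis rotation, such as \((x,y,z)\mapsto(x,-y,-z)\), fixes no center and has six \(2\)-cycles, giving \(\tid6\).
\item A \(120^\circ\) rotation fixes no center, since none lies on a body diagonal, and has four \(3\)-cycles, giving \(\tid4\).
\item The inversion \(-\mathrm{I}\) pairs the centers into six \(2\)-cycles, giving \(\tid6\).
\item A rotoreflection \(-R\) has order \(6\). Its powers \((-R)^2=R^2\) and \((-R)^3=-\mathrm{I}\) fix no face, so \(\langle -R\rangle\) acts freely and yields two \(6\)-cycles, giving \(\tid2\).
\end{itemize}

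The only non-free case is the coordinate-plane reflections, and this is the step requiring care. Take the reflection \(x\mapsto -x\). It fixes the four faces with centers \((0,\pm a,\pm b)\) and swaps the remaining eight faces in four \(2\)-cycles. I must check that on each fixed face the induced symmetry is the non-trivial reflection \(f\). For the face containing the roof edge \(\{(-3,0,6)\text{-type points}\}\), more precisely the face centered at \((0,a,b)\), the reflection swaps the two roof vertices \((\pm3,\ast,\ast)\) and fixes the apex vertex on the plane \(x=0\). This is exactly the mirror symmetry of the pentagon, so each fixed face must carry an \(f\)-invariant tile, contributing \(\tid4\tf4\).

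The remaining concern is that no face is ever mapped to itself by a rotation. Such a rotation would need an axis through the face center, and the face's induced symmetry group is only \(\langle f\rangle\). I will confirm this with the explicit vertex lists, in the style of the tables of \Cref{tabl:TriakisTetrahedronConjugacy}. Finally, I will weight each fixed-point count by its class size and divide by \(24\), which yields the stated expression.
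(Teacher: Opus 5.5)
Your orbit-counting bookkeeping is correct and follows the method the paper uses for the other solids; the paper states this proposition without proof. The class totals $1,3,8,1,3,8$ and the fixed-tiling counts $\tid{12},\tid6,\tid4,\tid6,\tid4\tf4,\tid2$ are all correct. As a check, $\tid{}=\tf{}=2$ gives $220$, matching A396858. However, your description of the faces is wrong, and the step ``hence the face centers are the $T_h$-orbit of $(0,a,b)$'' is false as written.

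Each pentagon is a roof edge plus \emph{two} cube-type vertices plus a roof-type apex, not three cube-type vertices. For example, $(0,3,6),(0,-3,6),(4,-4,4),(6,0,3),(4,4,4)$ is the face in the plane $x+2z=12$. The two faces sharing the roof edge $\{(0,\pm3,6)\}$ lie on opposite sides of the plane $x=0$ and have centers $(\pm a,0,b)$. So the face centers are the orbit of $(a,0,b)$, namely $(\pm a,0,\pm b)$, $(\pm b,\pm a,0)$, $(0,\pm b,\pm a)$. The set you wrote down is exactly the $v'$-orbit of \emph{vertices} (take $a=3$, $b=6$).

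This also means the reflection $x\mapsto -x$ does not fix the faces containing a roof edge $\{(0,\pm3,\pm6)\}$; it swaps them in pairs. The four faces it fixes are those centered at $(0,\pm b,\pm a)$, whose roof edges $\{(\pm3,\pm6,0)\}$ are perpendicular to the mirror. An example is the face $(3,6,0),(-3,6,0),(-4,4,4),(0,3,6),(4,4,4)$ in the plane $2y+z=12$. On it the reflection swaps the roof pair and the cube pair and fixes the apex $(0,3,6)$, which is what your last sentence about the induced symmetry actually describes.

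None of this changes a single number. Both $12$-point sets are single $T_h$-orbits whose stabilizers are generated by a coordinate-plane reflection: $\mathrm{diag}(1,-1,1)$ for $(a,0,b)$ and $\mathrm{diag}(-1,1,1)$ for $(0,a,b)$. These two reflections are conjugate under the cyclic permutation matrix, so the two actions are isomorphic $T_h$-sets, and every cycle type you computed is the true one on faces. To complete the proof, either replace the face description and center set with the correct ones or state this isomorphism explicitly. Either way, the weighted sum divided by $24$ gives the stated expression.
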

\begin{corollary}
  \label{thm:PyritohedronRotational}
  The number of ways of tiling the faces of the pyritohedron up to rotational tetrahedral symmetry \((T)\) with \(\tid{}\) tiles is given by the expression
  \begin{equation*}
    \frac{1}{12}
    \left(
      \tid{12} +
      3\tid6 +
      8\tid4
    \right).
  \end{equation*}
\end{corollary}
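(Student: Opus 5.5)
We restrict the orbit-counting theorem (\Cref{thm:orbitCountingTheorem}) to the rotational subgroup \(T \le T_h\). The group \(T\) consists of the identity, the \(3\) half-turns about the coordinate axes, and the \(8\) rotations of order \(3\) about the body diagonals. These are exactly the determinant-one elements generated by the cyclic permutation matrix and the products of two sign changes.

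For each rotation \(A \in T\), we count the tilings fixed by \(A\). The key point is that \(\langle A\rangle\) acts freely on the \(12\) faces. Then every orbit has size \(|A|\), each orbit is determined by a free choice of tile design on one face, and the count is \(t_{\id}^{12/|A|}\).

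We verify freeness using the explicit vertices generated by \(v=(4,4,4)\) and \(v'=(0,3,6)\).
\begin{description}
  \item[Half-turns.] Take the half-turn about the \(x\)-axis, \((x,y,z)\mapsto(x,-y,-z)\). Each pair of faces straddling the \(x\)-axis (containing the edge through \(v'\)-type points on the plane \(y=0\) or \(z=0\)) is swapped rather than fixed, because the axis passes through the midpoint of an edge, not the center of a face. The faces centered near \(\pm\) the \(y\)- and \(z\)-directions are sent to their opposites. Hence there are no fixed faces and the cycle type is \((2^6)\), which gives \(t_{\id}^6\).
  \item[Three-fold rotations.] The axis \((1,1,1)\) passes through the vertices \(\pm v\), not through any face center. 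Each face is therefore moved, and since \(3\) is prime every orbit has size exactly \(3\). The cycle type is \((3^4)\), which gives \(t_{\id}^4\).
\end{description}

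Weighting by class sizes gives
\[
  \tfrac{1}{12}\left(t_{\id}^{12}+3t_{\id}^{6}+8t_{\id}^{4}\right).
\]
An alternative route is to take the four rotational terms \(t_{\id}^{12}\), \(3t_{\id}^{6}\), \(8t_{\id}^{4}\) (the identity, half-turn and three-fold terms) of \Cref{thm:Pyritohedron} and renormalize by \(|T|=12\). This relies on \Cref{thm:Pyritohedron} being stated without proof, so the direct computation above is the safer check.

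The main obstacle is confirming that no rotation fixes a face. Concretely, we must check that no face center lies on a \(2\)-fold or \(3\)-fold axis, since a face fixed by a half-turn would contribute an induced rotation factor \(t_r\) that is absent from the formula. The \(12\) face centers lie on the \(\pm\) coordinate-plane directions of the form \((0,\pm a,\pm b)\) and its cyclic shifts, with \(a,b\neq 0\). These are visibly off all the rotation axes, and we would confirm this directly from the coordinates.
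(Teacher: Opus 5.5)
Your proposal is correct and is essentially the paper's argument: the corollary comes from keeping the identity, half-turn and three-fold terms of the $T_h$ count in \Cref{thm:Pyritohedron}. Those terms rest on exactly the freeness of the $T$-action on faces that you verify, since face centres of the form $(0,\pm a,\pm b)$ up to cyclic shift lie on no rotation axis. One wording slip is harmless: the half-turn about the $x$-axis sends the face with normal $(1,0,2)$ to the face with normal $(1,0,-2)$, not to its antipode, and only the edges through $(\pm 6,0,\pm 3)$ are bisected by that axis, so the ``$z=0$'' clause should be dropped; neither point changes your conclusion that no face is fixed.
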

\begin{oeis*}
  The number of \(n\)-colorings of the faces of the pyritohedron up to the \(12\) symmetries of the rotational tetrahedral group \(T\) has been added to the OEIS as sequence \oeis{A396987} (\(0\)-indexed): \[
    0, 1, 368, 44523, 1399296, 20349375, 181411056, 1153471613, 5726691328, \ldots.
  \]
  and the number of colorings up to the \(24\) symmetries of the pyritohedral group \(T_h\) has been added to the OEIS as sequence \oeis{A396858} (\(0\)-indexed): \[
    0, 1, 220, 23115, 708016, 10224175, 90917436, 577461325, 2865453760, \ldots.
  \]
\end{oeis*}

\section{Polyhedra with octahedral symmetry}
\label{sec:octahedral}
There are two Platonic solids and six Catalan solids with octahedral symmetry. The Platonic solids are the octahedron, cube, and the Catalan solids are the rhombic dodecahedron, triakis octahedron, deltoidal icositetrahedron, tetrakis hexahedron, disdyakis dodecahedron, and pentagonal icositetrahedron.

The full octahedral group \(O_h\) is the order-\(48\) Coxeter group with generators \(R_0\), \(R_1\), and \(R_2\) and group presentation \[
  O_h = \langle
    R_0, R_1, R_2 \mid R_0^2 = R_1^2 = R_2^2 = (R_0R_1)^4 = (R_1R_2)^3 = (R_0R_2)^2 = 1
  \rangle.
\]
We use the following matrix representation:
\[
  R_0 = \begin{bmatrix}1 & 0 & 0 \\ 0 & 1 & 0 \\ 0 & 0 & -1 \end{bmatrix}\!\!,
  \ R_1 = \begin{bmatrix} 1&0&0 \\ 0&0&1 \\ 0&1&0 \end{bmatrix}\!\!,
  \text{ and}
  \ R_2 = \begin{bmatrix} 0&1&0 \\ 1&0&0 \\ 0&0&1 \end{bmatrix}\!\!.
\]

The rotational octahedral group \(O\) is the order-\(24\) subgroup of elements with positive determinant, \(
  O = \{A \in O_h \mid \det(A) = 1\}.
\)

The conjugacy classes of both \(O\) and \(O_h\) are described in Table \ref{tabl:OctahedralGroupConjugacyClasses}.

\begin{table}[ht]
\[
\begin{tabular}{lllll}
  name & description & representative & size & order \\ \hline
  \(C_1^O\) & Identity & \(\mathrm{I}\) & 1 & 1 \\
  \(C_2^O\) & Rotation of a vertex by \(90^\circ\) & \(R_0R_1\) & \(6\) & \(4\) \\
  \(C_3^O\) & Rotation of a vertex by \(180^\circ\) & \(R_0R_1R_0R_1\) & \(3\) & \(2\) \\
  \(C_4^O\) & Rotation of a face by \(120^\circ\) & \(R_1R_2\) & \(8\) & \(3\) \\
  \(C_5^O\) & Rotation of an edge by \(180^\circ\) & \(R_0R_2\) & \(6\) & \(2\) \\
  \hline
  \(C_6^O\) & Central inversion & \(R_0R_1R_0R_1R_2R_1R_0R_1R_2\) & 1 & 2 \\
  \(C_7^O\) & Rotation of a vertex by \(90^\circ\) and reflection & \(R_0R_1R_0R_1R_2\) & \(6\) & \(4\) \\
  \(C_8^O\) & Reflection over the equator & \(R_0\) & \(3\) & \(2\) \\
  \(C_9^O\) & Rotation of a face by \(60^\circ\) and reflection & \(R_0R_1R_2\) & \(8\) & \(6\) \\
  \(C_{10}^O\) & Reflection transposing opposite edges & \(R_1\) & \(6\) & \(2\) \\
\end{tabular}
\]
\caption{The conjugacy classes of the full octahedral group \(O_h\). The first five are also the conjugacy classes of the rotational octahedral group \(O\). The descriptions are given with respect to the group action on an octahedron.}
\label{tabl:OctahedralGroupConjugacyClasses}
\end{table}
\subsection{Octahedron}
The octahedron, illustrated in \Cref{fig:octahedron}, is a Platonic solid with \(8\) equilateral triangular faces. Its \(6\) vertices are generated by the orbit of \((1,0,0)\) under its isometry group, the full octahedral group \(O_h\) of order \(48\).

\par\medskip\noindent
\begin{minipage}{\textwidth}\captionsetup{type=figure}
  \centering
    \begin{subfigure}[b]{0.35\textwidth}
      \centering
      \includegraphics[scale=0.5]{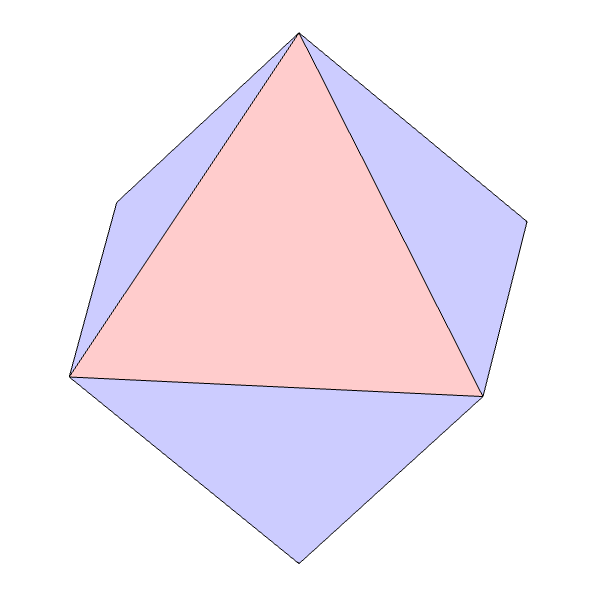}
      \caption{}
    \end{subfigure}
    \begin{subfigure}[b]{0.35\textwidth}
      \centering
      \includegraphics[scale=0.5]{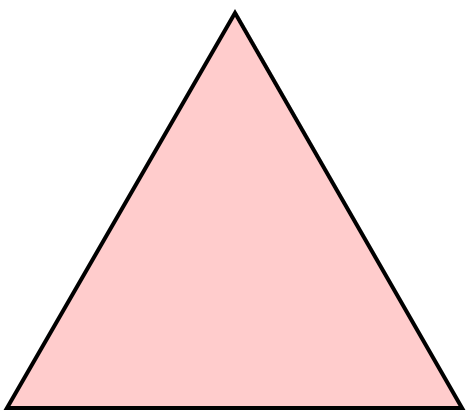}
      \caption{}
    \end{subfigure}
    \caption[Illustrations of the octahedron and its faces.]{
      Illustrations of (a) the octahedron with one face highlighted and (b) a face of the octahedron.
    }
    \label{fig:octahedron}
\end{minipage}
\par\medskip
The induced symmetry group on the faces of the octahedron is the dihedral group of the triangle \(D_3 = \langle r, f \mid r^3 = f^2 = (rf)^2 = e \rangle\).
\begin{theorem}
  The number of ways of tiling the octahedron up to full octahedral symmetry with \(\tid{}\) tiles, \(\tf{}\) of which are fixed under horizontal reflection and \(\tr{}\) of which are fixed under \(120^\circ\) rotation, is
  \begin{align*}
    &\frac{1}{48}
    \left(
      \tid8 +
      6\tid2 +
      3\tid4 +
      8\tid2t_r^2 +
      6\tid4 +
      \tid4 +
      6\tid2 +
      3\tid4 +
      8\tid{} t_r +
      6\tid2 t_f^4
    \right) \\
    &\qquad=\frac{1}{48} \left(
      \tid8 +
      13\tid4 +
      12\tid2 +
      8\tid2t_r^2 +
      8\tid{} t_r +
      6\tid2 t_f^4
    \right).
  \end{align*}
\end{theorem}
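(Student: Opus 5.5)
The plan is to apply the orbit-counting theorem (\Cref{thm:orbitCountingTheorem}) to the action of \(O_h\) on tilings of the eight faces, exactly as for the tetrahedron. For each of the ten conjugacy classes \(C_1^O,\dots,C_{10}^O\) of \Cref{tabl:OctahedralGroupConjugacyClasses} I will determine two things for a single representative \(A\):
\begin{itemize}
  \item the cycle structure of the permutation of the faces;
  \item for each face orbit of size \(n\) smaller than the order of \(A\), the symmetry of the face induced by \(A^n\).
\end{itemize}
A fixed tiling is then determined by one tile design per orbit. That design must be fixed by the induced symmetry, so an orbit contributes a factor \(\tid{}\), \(\tr{}\) or \(\tf{}\). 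Finally I weight by the class sizes \(1,6,3,8,6,1,6,3,8,6\) and divide by \(48\).

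To make this concrete, I identify the faces of the octahedron with the eight sign octants \((\pm,\pm,\pm)\), since each face has centroid \(\tfrac13(\pm1,\pm1,\pm1)\). Each matrix in \(O_h\) is a signed permutation matrix, so it permutes the octants in an easily computed way. The classes then go as follows.
\begin{itemize}
  \item \textbf{Free classes.} The identity gives \(\tid8\). The \(90^\circ\) vertex rotations \(C_2^O\) and the rotoreflections \(C_7^O\) give two \(4\)-cycles, hence \(\tid2\) each. The \(180^\circ\) vertex rotations \(C_3^O\), edge rotations \(C_5^O\) (e.g.\ \((x,y,z)\mapsto(y,x,-z)\), which always flips the sign of \(z\)), central inversion \(C_6^O\), and equatorial reflections \(C_8^O\) all act as four \(2\)-cycles with no fixed face, hence \(\tid4\) each.
  \item \textbf{\(C_4^O\).} The \(120^\circ\) rotation about \((1,1,1)\) fixes the two faces \((+,+,+)\) and \((-,-,-)\) and rotates each of them. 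The remaining six faces form two \(3\)-cycles, giving \(\tid2\tr2\).
  \item \textbf{\(C_9^O\).} The element is the negative of a \(120^\circ\) rotation. It swaps \((+,+,+)\) and \((-,-,-)\), and its square is a \(120^\circ\) rotation of each of them. The other six faces form one \(6\)-cycle, giving \(\tid{}\tr{}\).
  \item \textbf{\(C_{10}^O\).} The reflection swapping \(y\) and \(z\) fixes the four octants with equal \(y\)- and \(z\)-signs and reflects each of them. The other four faces form two \(2\)-cycles, giving \(\tid2\tf4\).
\end{itemize}
Summing with the class weights gives the first displayed line of the statement. Collecting like terms then gives the second: \(6+3+6+1\) and \(+3\) give \(13\tid4\), and \(6+6\) gives \(12\tid2\).

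The main obstacle is confirming the induced symmetries, namely that fixed faces under \(C_{10}^O\) are genuinely reflected and those under \(C_4^O\) and \(C_9^O\) are rotated. The induced symmetry must be checked on vertices, not just on octants, to get the correct factor \(\tr{}\) versus \(\tf{}\). I would do this as in the icosahedron example: track the action of \(A^n\) on the three vertices \(\pm e_i\) of each non-maximal face. A face whose vertices are permuted by a \(3\)-cycle gets \(r\), and a face with one vertex fixed and two swapped gets \(f\). I would record the results in a table analogous to \Cref{tabl:TetrahedronConjugacy}.
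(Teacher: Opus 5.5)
Your proposal is correct and takes essentially the same route as the paper. Both run the orbit-counting theorem over the ten classes of \(O_h\): every class acts freely on the faces except \(C_4^O\), \(C_9^O\), and \(C_{10}^O\), which contribute \(\tid2\tr2\), \(\tid{}\tr{}\), and \(\tid2\tf4\). Your octant labeling just replaces the paper's computer-generated table with a hand check. Your \(\tid{}\tr{}\) for \(C_9^O\) is the value the stated formula uses, whereas the paper's proof text misprints it as \(\tid{}\tr2\). One bookkeeping slip: the \(\tid4\) coefficients are \(3+6+1+3\), from \(C_3^O, C_5^O, C_6^O, C_8^O\), not ``\(6+3+6+1\) and \(+3\)''; your total of \(13\) is nonetheless correct.
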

\begin{proof}
  We proceed by cases, following \Cref{tabl:OctahedronConjugacy}.
  \begin{description}
    \item[Case 1.] If \(A \in C_{4}^O\), which is the conjugacy class consisting of rotations by \(120^\circ\), then there are \(2\) faces that are fixed (rotated) under the action of \(A\), and the remaining \(6\) faces are partitioned into \(2\) parts of size \(|A| = 3\). Thus, there are \(\tid{2}\tr{2}\) face tilings fixed under \(A \in C_{4}^O\).
    \item[Case 2.] If \(A \in C_{9}^O\), which is the conjugacy class consisting of rotations by \(60^\circ\) followed by reflection, then there are \(2\) faces that are fixed (rotated) under the action of \(A^2\), and thus these two faces appear in the same orbit of size \(2\) under \(\langle A \rangle\). The remaining \(6\) faces are also all in the same orbit of size \(|A| = 6\). Thus, there are \(t_{\id}\tr{2}\) face tilings fixed under \(A \in C_{9}^O\).
    \item[Case 3.] If \(A \in C_{10}^O\), which is a conjugacy class consisting of reflections, then there are \(4\) faces that are fixed (reflected) under the action of \(A\). The remaining \(4\) faces are partitioned into \(2\) parts of size \(|A| = 2\). Thus, there are \(t_{\id}^{2}\tf{4}\) face tilings fixed under \(A \in C_{10}^O\).
    \item[Case 4.] For the remaining elements, \(A \in O_h \setminus \left(C_{4}^O \cup C_{9}^O \cup C_{10}^O\right)\), which are elements of conjugacy classes that do not appear in the table, the action of \(\langle A \rangle\) on the faces is free. Thus, each remaining group element partitions the \(8\) faces into parts of size \(|A|\) and there are \(t_{\id}^{8/|A|}\) face tilings fixed under \(A\).
  \end{description}
\end{proof}
\begin{table}[ht]
  \[
  \begin{tabular}{P{1.7cm}P{1.6cm}lP{3.4cm}P{1.6cm}}
    conjugacy class  & cycle structure & non-maximal faces & vertex permutation generator & induced subgroup
    \\ \hline & & & & \\[-6pt]
    \multirow{2}{*}{\(\mathcal C^O_4\)} & \multirow{2}{*}{\((3^2,1^2)\)} &
      \(F_2 = (v_1, v_3, v_2)\) &
      \((v_1\ v_2\ v_3)\) &
      \(\langle r \rangle\)
      \\
      & &
      \(F_8 = (v_4, v_6, v_5)\) &
      \((v_4\ v_6\ v_5)\) &
      \(\langle r \rangle\)
    \\[3pt] \hline & & & & \\[-9pt]
    \multirow{2}{*}{\(\mathcal C^O_9\)} &
    \multirow{2}{*}{\((6,2)\)} &
      \(F_4 = (v_1, v_5, v_3)\) &
      \((v_1\ v_3\ v_5)\) &
      \(\langle r \rangle\)
      \\
      & &
      \(F_6 = (v_2, v_6, v_4)\) &
      \((v_2\ v_6\ v_4)\) &
      \(\langle r \rangle\)
    \\[3pt] \hline & & & & \\[-9pt]
    \multirow{4}{*}{\(\mathcal C^O_{10}\)} &
    \multirow{4}{*}{\((2^2,1^4)\)} &
      \(F_2 = (v_1, v_3, v_2)\) &
      \((v_1)(v_2\ v_3)\) &
      \(\langle f \rangle\)
      \\
      & &
      \(F_3 = (v_1, v_4, v_5)\) &
      \((v_1)(v_4\ v_5)\) &
      \(\langle f \rangle\)
      \\
      & &
      \(F_5 = (v_2, v_3, v_6)\) &
      \((v_2\ v_3)(v_6)\) &
      \(\langle f \rangle\)
      \\
      & &
      \(F_8 = (v_4, v_6, v_5)\) &
      \((v_4\ v_5)(v_6)\) &
      \(\langle f \rangle\)
  \end{tabular}
  \]
  \caption{The cycle structure of the permutations of the vertices of an octahedron under each conjugacy class of \(O_h\).}
  \label{tabl:OctahedronConjugacy}
\end{table}
\begin{corollary}
  The number of ways of tiling the octahedron up to rotational octahedral symmetry with \(\tid{}\) tiles, \(\tr{}\) of which are fixed under \(120^\circ\) rotation, is
  \[
    \frac{1}{24}
    \left(
      \tid8 +
      6\tid2 +
      3\tid4 +
      8\tid2t_r^2 +
      6\tid4
    \right)
    =\frac{1}{24} \left(
      \tid8 +
      9\tid4 +
      6\tid2 +
      8\tid2t_r^2
    \right).
  \]
\end{corollary}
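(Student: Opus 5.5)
Apply the orbit-counting theorem (\Cref{thm:orbitCountingTheorem}) with \(\mathcal A = O\), the rotational octahedral group of order \(24\), acting on tilings of the \(8\) faces. Since \(O \le O_h\) and the conjugacy classes of \(O\) are exactly \(C_1^O, \dots, C_5^O\) (\Cref{tabl:OctahedralGroupConjugacyClasses}), we reuse the fixed-tiling counts already found in the proof of the preceding theorem for exactly these classes. Then we average over \(|O| = 24\) instead of \(48\).

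The key steps, class by class, are as follows.

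\begin{description}
  \item[Identity \(C_1^O\).] It fixes all \(\tid8\) tilings.
  \item[Classes \(C_2^O\), \(C_3^O\), \(C_5^O\).] These do not appear in \Cref{tabl:OctahedronConjugacy}. So \(\langle A\rangle\) acts freely on the faces, and the count is \(t_{\id}^{8/|A|}\). With orders \(4, 2, 2\) and class sizes \(6, 3, 6\), this gives contributions \(6\tid2\), \(3\tid4\), and \(6\tid4\).
  \item[Class \(C_4^O\).] These are the \(120^\circ\) face rotations. By Case 1 of the preceding proof, two faces are fixed and rotated, and the other six fall into two orbits of size \(3\). This gives \(\tid2\tr2\), with weight \(8\).
\end{description}

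The only point needing care is confirming that restricting to \(O\) changes nothing beyond discarding the five improper classes. The face permutation and the induced face symmetry of each element depend only on the element, not on the ambient group. Moreover, conjugacy classes in \(O\) coincide with the first five \(O_h\)-classes, as recorded in the table. Hence the class sizes \(1, 6, 3, 8, 6\) are valid weights in \(O\) and sum to \(24\).

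Since \(O\) contains no reflections, \(\tf{}\) never appears, which is consistent with the statement. Summing the weighted contributions and dividing by \(24\) gives the first expression. Collecting the \(\tid4\) terms (\(3+6=9\)) gives the simplified form. There is no real obstacle here; the main task is the bookkeeping of matching class sizes to fixed counts.
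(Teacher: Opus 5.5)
Your proposal is correct and matches how the paper obtains this corollary. It keeps only the five rotational classes $C_1^O,\dots,C_5^O$ from the $O_h$ case analysis: the action is free for $C_2^O$, $C_3^O$, $C_5^O$, the class $C_4^O$ contributes $\tid{2}\tr{2}$, and the weighted sum is averaged over $|O|=24$. Your remark that the conjugacy classes coincide is harmless but not needed, since the orbit-counting theorem sums over elements and the $O_h$-classes contained in $O$ already partition $O$.
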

\begin{oeis*}
The number of \(n\)-colorings of the faces of the octahedron up to rotational octahedral symmetry \(O\) appears in the OEIS as sequence \oeis{A000543}: \[
    1, 23, 333, 2916, 16725, 70911, 241913, 701968, 1798281, 4173775, 8942021, \ldots,
  \]
  and the number of colorings up to the full octahedral group \(O_h\) appears in the OEIS as sequence \oeis{A128766}: \[
    1, 22, 267, 1996, 10375, 41406, 135877, 384112, 966141, 2212750, 4693711,  \ldots.
  \]
\end{oeis*}
\subsection{Cube}
The cube, illustrated in \Cref{fig:cube}, is a Platonic solid with \(6\) square faces. Its \(8\) vertices are generated by the orbit of \(v = (1,1,1)\) under the action of its isometry group, the full octahedral group \(O_h\) of order \(48\).

\par\medskip\noindent
\begin{minipage}{\textwidth}\captionsetup{type=figure}
  \centering
    \begin{subfigure}[b]{0.35\textwidth}
      \centering
      \includegraphics[scale=0.5]{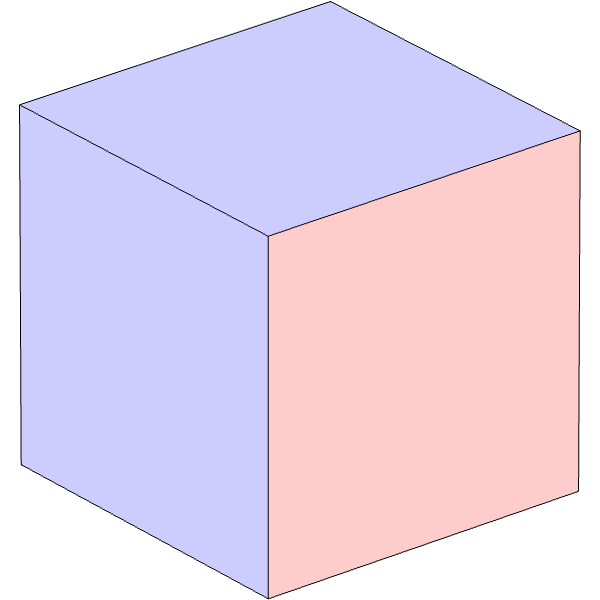}
      \caption{}
    \end{subfigure}
    \begin{subfigure}[b]{0.35\textwidth}
      \centering
      \includegraphics[scale=0.5]{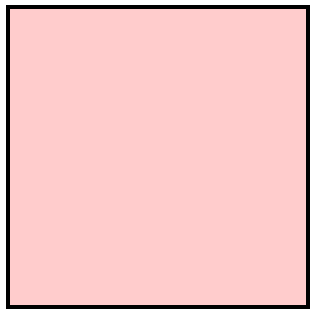}
      \caption{}
    \end{subfigure}
    \caption[Illustrations of the cube and its faces.]{
      Illustrations of (a) the cube with one face highlighted and (b) a face of the cube.
    }
    \label{fig:cube}
\end{minipage}
\par\medskip

The induced symmetry group on the faces of the cube is described by the dihedral group of the square \[
  D_4 = \langle r, f \mid r^4 = f^2 = (rf)^2 = e\rangle,
\] where \(r\) acts by \(90^\circ\) rotation, and \(f\) acts by a non-diagonal reflection across the square.

\begin{table}
  \[
  \begin{tabular}{P{1.7cm}P{1.6cm}lP{3.4cm}P{1.6cm}}
    conjugacy class  & cycle structure & non-maximal faces & vertex permutation generator & induced subgroup
    \\ \hline & & & & \\[-6pt]
    \multirow{2}{*}{\(\mathcal C^O_2\)} &
    \multirow{2}{*}{\((4,1^2)\)} &
      \(F_1 = (v_1, v_2, v_4, v_3)\) &
      \((v_1\ v_3\ v_4\ v_2)\) &
      \(\langle r \rangle\)
      \\
      & &
      \(F_6 = (v_5, v_7, v_8, v_6)\) &
      \((v_5\ v_7\ v_8\ v_6)\) &
      \(\langle r \rangle\)
    \\[3pt] \hline & & & & \\[-9pt]
    \multirow{2}{*}{\(\mathcal C^O_3\)} &
    \multirow{2}{*}{\((2^2,1^2)\)} &
      \(F_1 = (v_1, v_2, v_4, v_3)\) &
      \((v_1\ v_4)(v_2\ v_3)\) &
      \(\langle r^2 \rangle\)
      \\
      & &
      \(F_6 = (v_5, v_7, v_8, v_6)\) &
      \((v_5\ v_8)(v_6\ v_7)\) &
      \(\langle r^2 \rangle\)
    \\[3pt] \hline & & & & \\[-9pt]
    \multirow{2}{*}{\(\mathcal C^O_7\)} &
    \multirow{2}{*}{\((4,2)\)} &
      \(F_2 = (v_1, v_3, v_7, v_5)\) &
      \((v_1\ v_7)(v_3\ v_5)\) &
      \(\langle r^2 \rangle\)
      \\
      & &
      \(F_4 = (v_2, v_6, v_8, v_4)\) &
      \((v_2\ v_8)(v_4\ v_6)\) &
      \(\langle r^2 \rangle\)    \\[3pt] \hline & & & & \\[-9pt]
    \multirow{4}{*}{\(\mathcal C^O_8\)} &
    \multirow{4}{*}{\((2,1^4)\)} &
      \(F_1 = (v_1, v_2, v_4, v_3)\) &
      \((v_1\ v_2)(v_3\ v_4)\) &
      \(\langle f \rangle\)
      \\
      & &
      \(F_3 = (v_1, v_5, v_6, v_2)\) &
      \((v_1\ v_2)(v_5\ v_6)\) &
      \(\langle f \rangle\)
      \\
      & &
      \(F_5 = (v_3, v_4, v_8, v_7)\) &
      \((v_3\ v_4)(v_7\ v_8)\) &
      \(\langle f \rangle\)
      \\
      & &
      \(F_6 = (v_5, v_7, v_8, v_6)\) &
      \((v_5\ v_6)(v_7\ v_8)\) &
      \(\langle f \rangle\)
    \\[3pt] \hline & & & & \\[-9pt]
    \multirow{2}{*}{\(\mathcal C^O_{10}\)} &
    \multirow{2}{*}{\((2^2,1^2)\)} &
      \(F_1 = (v_1, v_2, v_4, v_3)\) &
      \((v_1)(v_2\ v_3)(v_4)\) &
      \(\langle rf \rangle\)
      \\
      & &
      \(F_6 = (v_5, v_7, v_8, v_6)\) &
      \((v_5)(v_6\ v_7)(v_8)\) &
      \(\langle rf \rangle\)
  \end{tabular}
  \]
  \caption{The cycle structure of the permutations of the vertices of a cube under each conjugacy class of \(O_h\).}
  \label{tabl:CubeConjugacy}
\end{table}
\begin{theorem}
  The number of ways of tiling the cube up to full octahedral symmetry \(O_h\) with \(\tid{}\) tiles of which
  \(\tr{}\) are fixed under \(90^\circ\) rotation,
  \(\trr{}\) are fixed under \(180^\circ\) rotation,
  \(\tf{}\) are fixed under horizontal reflection, and
  \(\trf{}\) are fixed under diagonal reflection
  is given by the expression
  \begin{align*}
    &\frac{1}{48}
    \left(
      \tid6 +
      6\tid{}\tr2 +
      3\tid2\trr2 +
      8\tid2 +
      6\tid3 +
      1\tid3 +
      6\tid{}\trr{} +
      3\tid{}\tf4 +
      8\tid{} +
      6\tid2\trf2
    \right) \\
    &\qquad=\frac{1}{48} \left(
      \tid6 +
      7\tid3 +
      8\tid2 +
      8\tid{} +
      6\tid{}\tr2 +
      6\tid{}\trr{} +
      3\tid2\trr2 +
      3\tid{}\tf4 +
      6\tid2\trf2
    \right).
  \end{align*}
  \label{thm:cube}
\end{theorem}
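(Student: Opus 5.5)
We apply the orbit-counting theorem (\Cref{thm:orbitCountingTheorem}) to the action of \(O_h\) on tilings, as in the tetrahedron and octahedron proofs. For each conjugacy class in \Cref{tabl:OctahedralGroupConjugacyClasses} we pick a representative \(A\) and decompose the six faces into orbits under \(\langle A\rangle\). For an orbit of size \(n\), the tile on one face determines the tiles on all faces of the orbit. That tile must also be fixed by the symmetry of the face induced by \(A^n\). So an orbit contributes a factor \(t_g\), where \(g\) is this induced symmetry, and free orbits contribute \(\tid{}\). Multiplying over orbits, weighting by the class sizes, and dividing by \(48\) gives the formula. The second line of the statement is then just a regrouping of like terms.

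The geometric input is a description of how each class acts on the faces \(\pm e_i\) of the cube \([-1,1]^3\).
\begin{itemize}
\item \(C_2^O\): a \(90^\circ\) rotation about a face axis gives cycle type \((4,1^2)\). It rotates the two axial faces by a quarter turn, contributing \(\tid{}\tr2\).
\item \(C_3^O\): the \(180^\circ\) rotation has cycle type \((2^2,1^2)\), with induced \(r^2\) on the two fixed faces, contributing \(\tid2\trr2\).
\item \(C_4^O\): a \(120^\circ\) rotation about a body diagonal acts freely on the faces with cycle type \((3^2)\), contributing \(\tid2\).
\item \(C_5^O\): an edge rotation is free, \((2^3)\), contributing \(\tid3\).
\item \(C_6^O\): the central inversion is free, \((2^3)\), contributing \(\tid3\).
\item \(C_7^O\): the rotoreflection of order 4 has cycle type \((4,2)\). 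Its square is the \(180^\circ\) rotation about the axis, inducing \(r^2\) on the 2-cycle, so it contributes \(\tid{}\trr{}\).
\item \(C_8^O\): a coordinate reflection such as \(R_0\) fixes four faces, each reflected across a line parallel to an edge (the \(f\) type), and swaps the other two. This contributes \(\tid{}\tf4\).
\item \(C_9^O\): the order-6 rotoreflection about a body diagonal is free. It has cycle type \((6)\), contributing \(\tid{}\).
\item \(C_{10}^O\): a diagonal reflection such as \(R_1\) (swapping \(y\) and \(z\)) fixes the faces \(\pm e_1\), reflecting each across a diagonal (the \(rf\) type), and swaps the remaining four in two pairs. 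This contributes \(\tid2\trf2\).
\end{itemize}

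To verify these claims I would use \Cref{tabl:CubeConjugacy}, which lists exactly the classes with non-free behaviour: \(C_2^O, C_3^O, C_7^O, C_8^O, C_{10}^O\). The remaining classes \(C_1^O,C_4^O,C_5^O,C_6^O,C_9^O\) act freely, so each contributes \(t_{\id}^{6/|A|}\). The orders in \Cref{tabl:OctahedralGroupConjugacyClasses} give \(\tid6,\tid2,\tid3,\tid3,\tid{}\) respectively.

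The main obstacle is correctly identifying the induced face symmetry in the non-free classes, particularly distinguishing \(f\) from \(rf\) and handling \(C_7^O\). For \(C_8^O\) versus \(C_{10}^O\), the vertex permutation generators in \Cref{tabl:CubeConjugacy} settle it. A permutation like \((v_1\ v_2)(v_3\ v_4)\) fixes no vertex of the square, so the reflection axis passes through edge midpoints and is of type \(f\). A permutation like \((v_1)(v_2\ v_3)(v_4)\) fixes two opposite corners, so it is a diagonal reflection \(rf\). For \(C_7^O\), I would explicitly compute \(A^2\) for the representative \(R_0R_1R_0R_1R_2\) and check that it acts as \((v_1\ v_7)(v_3\ v_5)\) on \(F_2\), which is a half-turn. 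Once these are checked, the rest is summation.
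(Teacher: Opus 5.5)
Your proposal is correct and is essentially the paper's own proof. Both apply orbit counting over the conjugacy classes of \(O_h\), use \Cref{tabl:CubeConjugacy} to find the five non-free classes and their induced face symmetries (\(r\), \(r^2\), \(r^2\) via \(A^2\) for \(C_7^O\), \(f\), \(rf\)), and give each free class the contribution \(t_{\id}^{6/|A|}\). Your cycle type \((4,1^2)\) for \(C_2^O\), with the four lateral faces forming one orbit of size \(|A|=4\), is more accurate than the paper's wording (``\(2\) parts of size \(2\)''), though both lead to the same contribution \(\tid{}\tr2\).
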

\begin{proof}
  We proceed by cases, following \Cref{tabl:CubeConjugacy}.
  \begin{description}
    \item[Case 1.] If \(A \in C_{2}^O\), which is the conjugacy class consisting of rotations by \(90^\circ\), then there are \(2\) faces that are fixed (rotated by \(90^\circ\)) under the action of \(A\), and the remaining \(4\) faces are partitioned into \(2\) parts of size \(|A| = 2\). Thus the number of face tilings fixed under \(A \in C_{2}^O\) is \(t_{\id}\tr{2}\).
    \item[Case 2.] If \(A \in C_{3}^O\), which is a conjugacy class consisting of rotations by \(180^\circ\), then there are \(2\) faces that are fixed (rotated by \(180^\circ\)) under the action of \(A\), and the remaining \(4\) faces are partitioned into \(2\) parts of size \(|A| = 2\). Thus the number of face tilings fixed under \(A \in C_{3}^O\) is \(t_{\id}^{2}\trr{2}\).
    \item[Case 3.] If \(A \in C_{7}^O\), which is a conjugacy class consisting of rotations by \(90^\circ\) followed by a reflection, then there are \(2\) faces that are fixed (rotated by \(180^\circ\)) under the action of \(A^2\), and thus these faces appear in the same orbit under \(\langle A \rangle\). The remaining \(4\) faces are all in the same orbit size \(|A| = 4\). Thus the number of face tilings fixed under \(A \in C_{7}^O\) is \(t_{\id}\trr{}\).
    \item[Case 4.] If \(A \in C_{8}^O\), which is a conjugacy class consisting of reflections, then there are \(4\) faces that are fixed (reflected horizontally/vertically) under the action of \(A\), and the remaining \(2\) faces appear in the same orbit of size \(|A| = 2\). Thus the number of face tilings fixed under \(A \in C_{8}^O\) is \(t_{\id}\tf{4}\).
    \item[Case 5.] If \(A \in C_{10}^O\), which is a conjugacy class consisting of reflections, then there are \(2\) faces that are fixed (reflected over the diagonal) under the action of \(A\), and the remaining \(4\) faces are partitioned into \(2\) parts of size \(|A| = 2\). Thus the number of face tilings fixed under \(A \in C_{10}^O\) is \(t_{\id}^{2}\trf{2}\).
    \item[Case 6.] For the remaining elements, \(A \in O_h \setminus \left(C_{2}^O \cup C_{3}^O \cup C_{7}^O \cup C_{8}^O \cup C_{10}^O\right)\), which are elements of conjugacy classes that do not appear in the table, the action of the cyclic subgroup \(\langle A \rangle\) on the faces is free. Thus each remaining group element partitions the \(24\) faces into parts of size \(|A|\) and the number of face tilings fixed under \(A\) is \(t_{\id}^{6/|A|}\).
  \end{description}
\end{proof}
\begin{corollary}
  The number of ways of tiling the cube up to rotation octahedral symmetry \(O\) with \(\tid{}\) tiles of which
  \(\tr{}\) are fixed under \(90^\circ\) rotation and
  \(\trr{}\) are fixed under \(180^\circ\) rotation
  is given by the expression
  \begin{align*}
    &\frac{1}{24}
    \left(
      \tid6 +
      6\tid{}\tr2 +
      3\tid2\trr2 +
      8\tid2 +
      6\tid3
    \right).
  \end{align*}
  \label{thm:cubeRotation}
\end{corollary}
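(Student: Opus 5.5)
This corollary is the restriction of \Cref{thm:cube} to the rotational subgroup \(O = \{A \in O_h \mid \det(A) = 1\}\). By \Cref{tabl:OctahedralGroupConjugacyClasses}, the conjugacy classes of \(O\) are exactly \(C_1^O, \dots, C_5^O\), with sizes \(1, 6, 3, 8, 6\) summing to \(24\). I would apply the orbit-counting theorem (\Cref{thm:orbitCountingTheorem}) with \(\mathcal A = O\) and \(X\) the set of tilings of the six faces. The count of tilings fixed by each \(A \in O\) depends only on \(A\) and the tile designs, not on the ambient group, so it can be read off from the case analysis in the proof of \Cref{thm:cube}.

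The key steps, in order, are as follows.

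For the identity \(C_1^O\), every tiling is fixed, giving \(\tid6\).

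For \(C_2^O\) (Case 1 of the theorem), the two face centers on the rotation axis are rotated by \(90^\circ\) and must carry \(r\)-fixed designs. The other four faces form a single \(4\)-cycle and contribute one free choice. This gives \(\tid{}\tr2\), with weight \(6\).

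For \(C_3^O\) (Case 2), the two axial faces are rotated by \(180^\circ\). The four equatorial faces form two \(2\)-cycles. This gives \(\tid2\trr2\), with weight \(3\).

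For \(C_4^O\) and \(C_5^O\), the action of \(\langle A\rangle\) on faces is free (Case 6). A \(120^\circ\) vertex rotation gives two \(3\)-cycles, hence \(\tid2\). A \(180^\circ\) edge rotation gives three \(2\)-cycles, hence \(\tid3\). The weights are \(8\) and \(6\).

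Summing with these weights and dividing by \(|O| = 24\) yields the stated expression. The excluded terms are exactly the five terms of \Cref{thm:cube} coming from \(C_6^O,\dots,C_{10}^O\), which are the determinant \(-1\) classes.

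The only step that needs care is checking the face cycle types of the two rotation classes that do not appear in \Cref{tabl:CubeConjugacy}. For these I would use the explicit representatives \(R_1R_2\) and \(R_0R_2\): confirm that neither fixes any face center \(\pm e_i\), and that their face permutations have cycle types \((3^2)\) and \((2^3)\) respectively. One also has to note that in Case 1 the fixed faces are rotated by \(r\) (or \(r^{-1}\), which fixes the same designs), so \(t_r\) is the right count there.
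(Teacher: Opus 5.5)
Your proposal is correct and follows the paper's own route: the corollary is obtained by restricting the orbit-counting sum of \Cref{thm:cube} to the determinant-one classes $C_1^O,\dots,C_5^O$, which keeps exactly its first five terms. Your explicit checks of the representatives $R_1R_2$ (face cycle type $(3^2)$) and $R_0R_2$ (face cycle type $(2^3)$) are sound, and so is your description of the four equatorial faces under $C_2^O$ as a single $4$-cycle; that description is more accurate than the paper's wording in Case~1 of \Cref{thm:cube}, though both give the same term.
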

\begin{oeis*}
  The number of \(n\)-colorings of the faces of the cube up to the \(24\) symmetries of the rotational octahedral group \(O\) appears in the OEIS as sequence \oeis{A047780} (\(0\)-indexed): \[
    0, 1, 10, 57, 240, 800, 2226, 5390, 11712, 23355, 43450, 76351, 127920, \ldots,
  \]
  and the number of colorings up to the \(48\) symmetries of the octahedral group \(O_h\) appears in the OEIS as sequence \oeis{A198833} (\(1\)-indexed): \[
    1, 10, 56, 220, 680, 1771, 4060, 8436, 16215, 29260, 50116, 82160, \ldots.
  \]
\end{oeis*}
\begin{example}
  The Escher-inspired tile design shown in Figure \ref{fig:escherTile_1} is not fixed by any symmetries of the square, so \(\tid{} = 8\) and \(\tr{}=\trr{}=\tf{}=\trf{}=0\), resulting in \(5548\) distinct tilings of the cube by these tiles (and rotations/reflections of these tiles) up to rotation and reflection of the cube.
\end{example}

\begin{example}
  David Reimann's May 2022 Museum of Mathematics talk ``Play Truchet: Fun with Tiling Patterns and Generalizations'' involved miscellaneous tilings on the faces of cubes. One ``variant pattern with 4 arcs per face'' \cite{Reimann} (shown in Figure \ref{fig:drArt}) can be used to create 159\,775 distinct cubes.
  \setlength{\fboxsep}{0pt}
  \begin{figure}[h!tbp]
    \centering
    \begin{minipage}[b]{0.5\textwidth}
      \begin{minipage}[b]{0.30\textwidth}
        \centering
        \fbox{\includegraphics[width=0.95\linewidth]{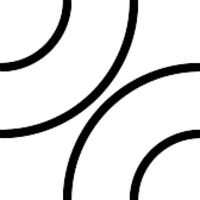}}
        \subcaption{}\label{fig:drArt_1}
      \end{minipage}
      \hfill
      \begin{minipage}[b]{0.30\textwidth}
        \centering
        \fbox{\includegraphics[width=0.95\linewidth]{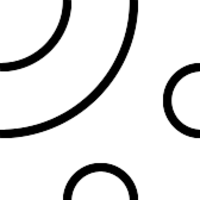}}
        \subcaption{}\label{fig:drArt_2}
      \end{minipage}
      \hfill
      \begin{minipage}[b]{0.30\textwidth}
        \centering
        \fbox{\includegraphics[width=0.95\linewidth]{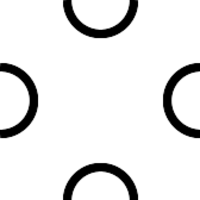}}
        \subcaption{}\label{fig:drArt_3}
      \end{minipage}
      \vspace{1em}
      \begin{minipage}[b]{0.30\textwidth}
        \centering
        \fbox{\includegraphics[width=0.95\linewidth]{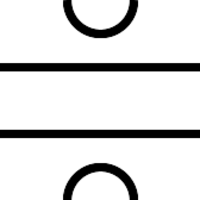}}
        \subcaption{}\label{fig:drArt_4}
      \end{minipage}
      \hfill
      \begin{minipage}[b]{0.30\textwidth}
        \centering
        \fbox{\includegraphics[width=0.95\linewidth]{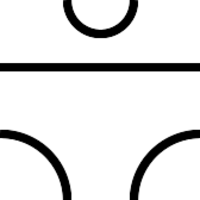}}
        \subcaption{}\label{fig:drArt_5}
      \end{minipage}
      \hfill
      \begin{minipage}[b]{0.30\textwidth}
        \centering
        \fbox{\includegraphics[width=0.95\linewidth]{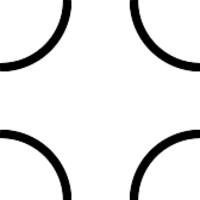}}
        \subcaption{}\label{fig:drArt_6}
      \end{minipage}
    \end{minipage}
    \qquad
    \begin{minipage}[b]{0.3\textwidth}
      \centering
      \includegraphics[width=\linewidth]{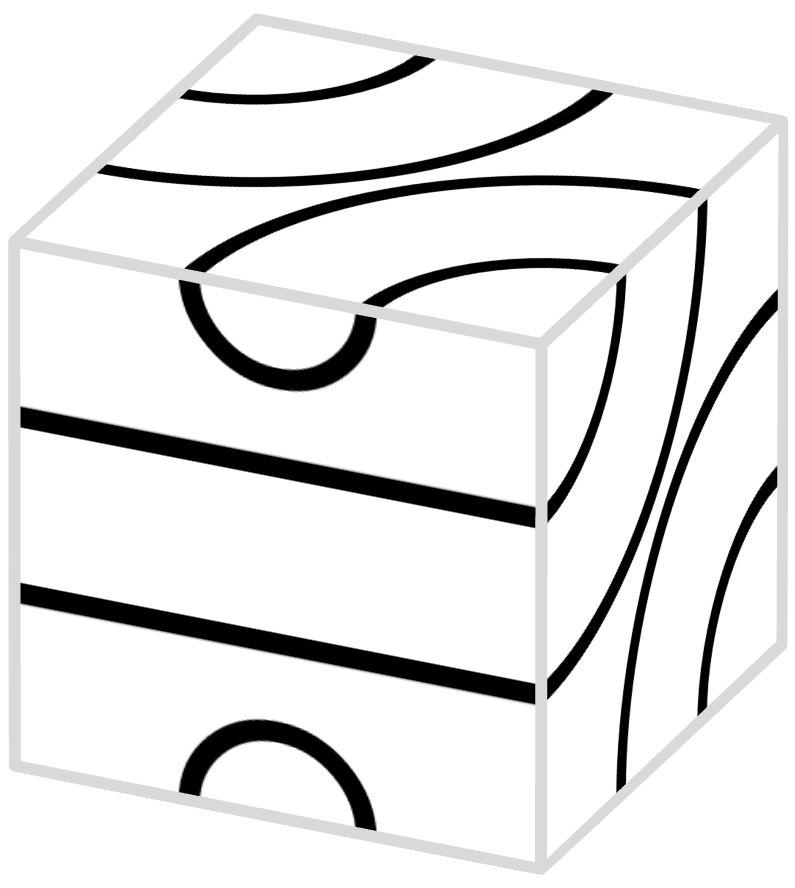}
      \subcaption{}\label{fig:drArt_7}
    \end{minipage}
    \caption{
      Six tile designs (a--f) from David Reimann \cite{Reimann}, and an illustration (g) of one of the 159775 different tilings of the cube up to rotation and reflection that are generated by this design, where \(\tid{}=14, \tr{}=2, \trr{}=6, \tf{}=6, \trf{}=6\).
    }
    \label{fig:drArt}
  \end{figure}
\end{example}

\begin{example}
  In our previous paper \cite[\S 6.6]{KageyKeehn}, we asked about tilings on the faces of a cube where each face has been partitioned into an \(n \times n\) grid.

  \begin{figure}[ht]
    \begin{subfigure}[b]{0.3\textwidth}
      \centering
      \includegraphics[width=\linewidth]{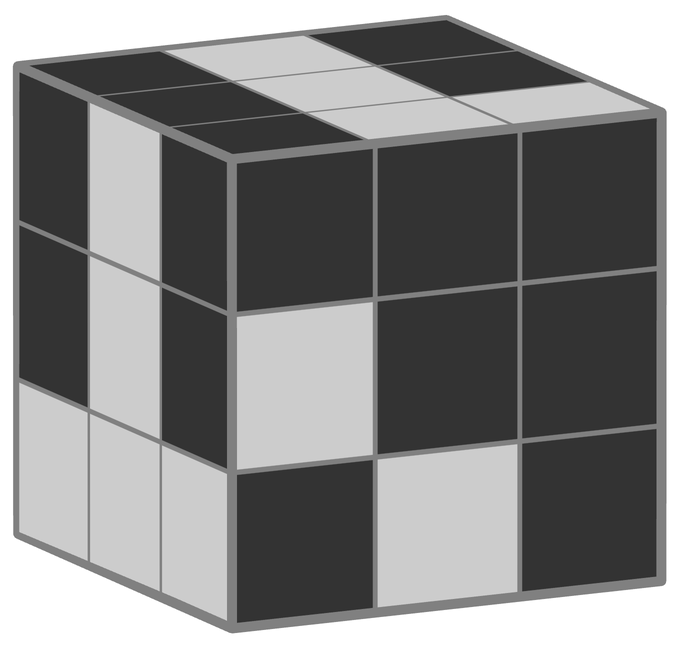}
      \subcaption{}\label{fig:3x3Cube_bw}
    \end{subfigure}
    \hfill
    \begin{subfigure}[b]{0.3\textwidth}
      \centering
      \includegraphics[width=\linewidth]{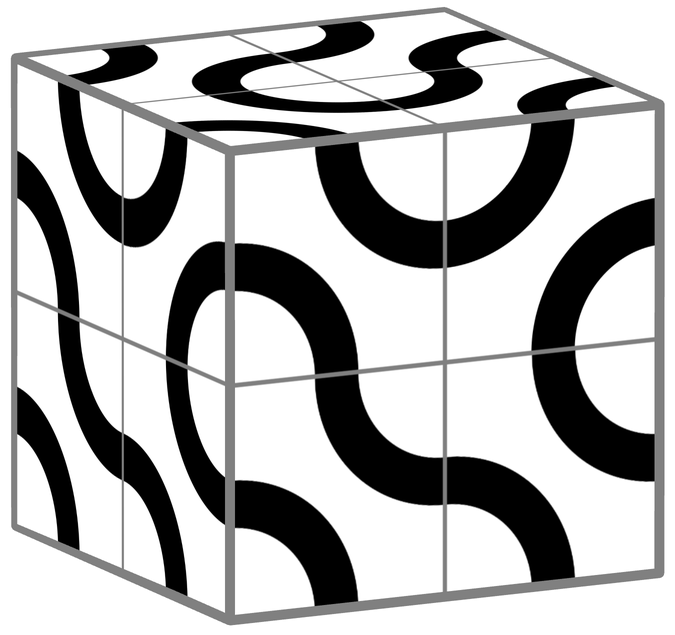}
      \subcaption{}\label{fig:2x2Cube_Truchet}
    \end{subfigure}
    \hfill
    \begin{subfigure}[b]{0.3\textwidth}
      \centering
      \includegraphics[width=\linewidth]{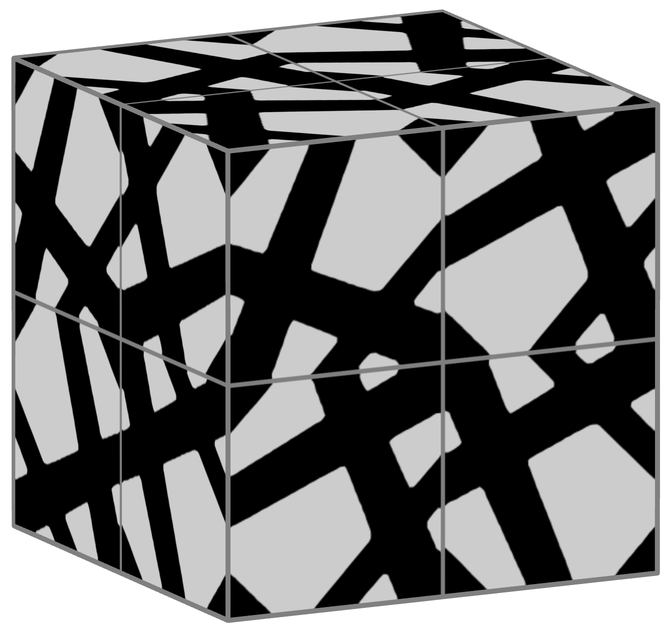}
      \subcaption{}\label{fig:2x2Cube_Escher}
    \end{subfigure}
    \\
    \begin{subfigure}[b]{0.3\textwidth}
      \centering
      \includegraphics[width=0.476\linewidth]{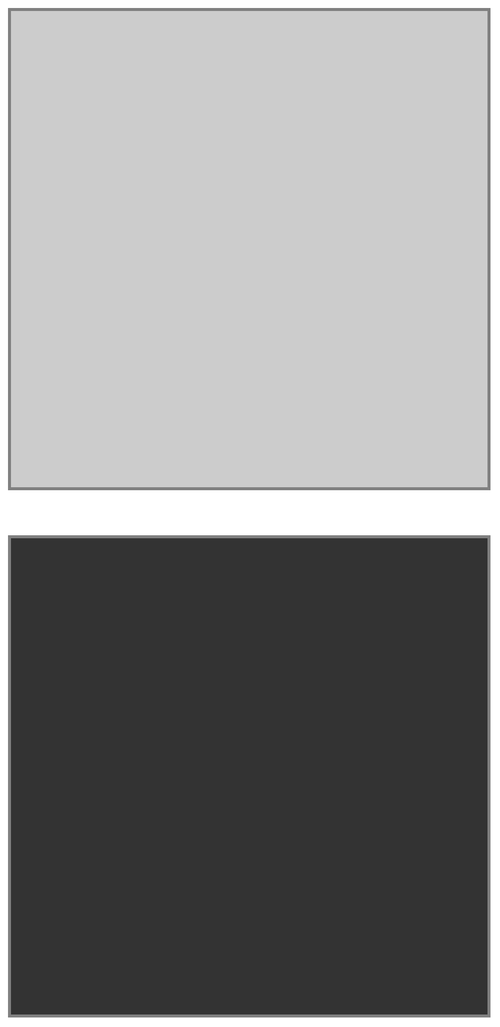}
      \subcaption{}\label{fig:3x3CubeFaces_bw}
    \end{subfigure}
    \hfill
    \begin{subfigure}[b]{0.3\textwidth}
      \centering
      \includegraphics[width=\linewidth]{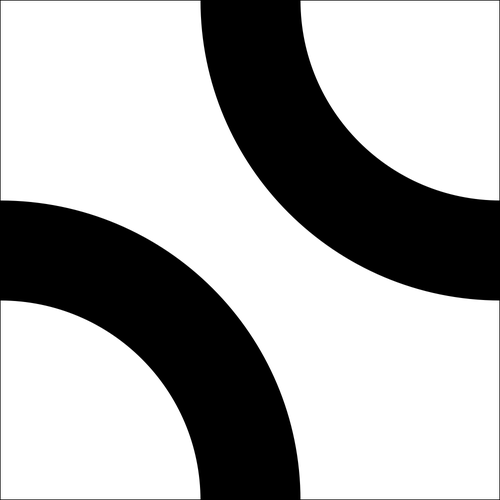}
      \subcaption{}\label{fig:3x3CubeFaces_Truchet}
    \end{subfigure}
    \hfill
    \begin{subfigure}[b]{0.3\textwidth}
      \centering
      \includegraphics[width=\linewidth]{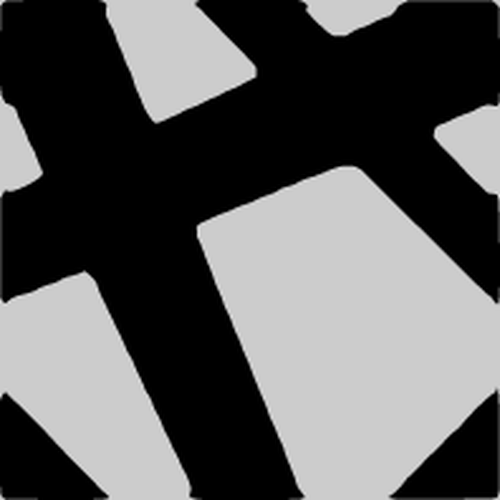}
      \subcaption{}\label{fig:2x2CubeFace_3}
    \end{subfigure}
    \caption{
      Three cubes with grids on each face and their corresponding tile designs:
      (a) a cube with \(3\times3\) grid faces and light-and-dark tiles,
      (b) a cube with \(2\times2\) grid faces and Truchet tiles, and
      (c) a cube with \(2\times2\) grid faces and Escher's tile design.
    }
    \label{fig:gridCubes}
  \end{figure}

  Suppose that we make \(n\times n\)  tiles out of \(\dot t_{\id}\) \(1\times1\) tiles, of which,
  \(\dot t_r\) are fixed under \(90^\circ\) rotation,
  \(\dot t_{r^2}\) are fixed under \(180^\circ\) rotation,
  \(\dot t_{f}\) are fixed under horizontal reflection, and
  \(\dot t_{rf}\) are fixed under diagonal reflection. Then by Theorem 24 in our previous paper \cite{KageyKeehn}, the number of \(n \times n\) tiles that can be formed is \begin{alignat*}{1}
    \hat t_{\id} &= \dot t_{\id}^{n^2}
    \\
    \hat t_{r} &= \begin{cases}
      \dot t_{\id}^{n^2/4} & n \text{ is even} \\
      \dot t_{\id}^{(n^2-1)/4}\dot t_{r} & n \text{ is odd}
    \end{cases}
    \\
    \hat t_{r^2} &= \begin{cases}
      \dot t_{\id}^{n^2/2} & n \text{ is even} \\
      \dot t_{\id}^{(n^2-1)/2}\dot t_{r^2} & n \text{ is odd}
    \end{cases}
    \\
    \hat t_{f} &= \begin{cases}
      \dot t_{\id}^{n^2/2} & n \text{ is even} \\
      \dot t_{\id}^{(n^2-n)/2}\dot t_{f}^n & n \text{ is odd}
    \end{cases}
    \\
    \hat t_{rf} &= \dot t_{\id}^{(n^2-n)/2}\dot t_{rf}^n,
  \end{alignat*}
  which recovers \(\hat t_g = \dot t_g\) when \(n=1\).

  Thus the number of ways to tile a cube with \(n \times n\) grids on each face up to the full octahedral group \(O_h\) is \[
    \begin{cases}
       \begin{alignedat}{4}
        \displaystyle\frac{1}{48}\big(\dot t_{\id}^{6n^2} &+
        6\dot t_{\id}^{3n^2/2} &&+
        3\dot t_{\id}^{3n^2} &&+
        8\dot t_{\id}^{2n^2} &&+
        6\dot t_{\id}^{3n^2} +
        \\
        \dot t_{\id}^{3n^2} &+
        6\dot t_{\id}^{3n^2/2} &&+
        3\dot t_{\id}^{3n^2} &&+
        8\dot t_{\id}^{n^2} &&+
        6\dot t_{\id}^{3n^2-n}\dot t_{rf}^{2n}\big)
      \end{alignedat} & n \text{ is even}
    \\[20pt]
    \begin{alignedat}{4}
      \displaystyle\frac{1}{48}\big(\dot t_{\id}^{6n^2} &+
      6\dot t_{\id}^{(3n^2-1)/2}\dot t_{r}^2 &&+
      3\dot \dot t_{\id}^{3n^2-1}\dot t_{r^2}^2 &&+
      8\dot t_{\id}^{2n^2} &&+
      6\dot t_{\id}^{3n^2} +
      \\
      \dot t_{\id}^{3n^2} &+
      6\dot \dot t_{\id}^{(3n^2-1)/2}\dot t_{r^2} &&+
      3\dot \dot t_{\id}^{3n^2-2n}\dot t_{f}^{4n} &&+
      8\dot t_{\id}^{n^2} &&+
      6\dot t_{\id}^{(3n^2-n)}\dot t_{rf}^{2n}\big)
    \end{alignedat} & n \text{ is odd}
    \end{cases}
  \]
  For the cubes shown in Figure \ref{fig:gridCubes}, we have the following values:
  \begin{itemize}
    \item For the cube with \(3 \times 3\) grids on each face in Figure \ref{fig:3x3Cube_bw},
    \(\dot t_{\id}=\dot t_{r^2}=\dot t_{f}=\dot t_{rf}=2\), resulting in \(375\,300\,676\,436\,736\approx 3.75 \times 10^{14}\) distinct tilings by two colors of tiles up to rotation and reflection of the cube.
    \item For the cube with \(2\times2\) grids on each face in Figure \ref{fig:2x2Cube_Truchet},
    \(\dot t_{\id} = \dot t_{rf} = 2\) and \(\dot t_{f} = \dot t_{r^2} = 0\), resulting in \(352\,744\) distinct tilings by Truchet tiles up to rotation and reflection of the cube.
    \item For the cube with \(2\times2\) grids on each face in Figure \ref{fig:2x2Cube_Escher},
    \(\dot t_{\id} = 8\) and \(\dot t_{f} = \dot t_{r^2} = \dot t_{rf} = 0\), resulting in \(98\,382\,635\,078\,398\,662\,656 \approx 9.8 \times 10^{19}\) distinct tilings by Escher's tile design up to rotation and reflection of the cube.
  \end{itemize}
\end{example}

\subsection{Rhombic dodecahedron}
The rhombic dodecahedron, illustrated in \Cref{fig:rhombicDodecahedron}, is a Catalan solid with \(12\) rhombic faces and is the polyhedral dual of the cuboctahedron. Its \(14\) vertices are generated by the orbits of \(v = (1,0,0)\) and \(v' = \frac{1}{2}(1,1,1)\) under the action of its isometry group, the full octahedral group \(O_h\) of order \(48\), and the orbits have size \(6\) and \(8\) respectively.

\par\medskip\noindent
\begin{minipage}{\textwidth}\captionsetup{type=figure}
  \centering
    \begin{subfigure}[b]{0.35\textwidth}
      \centering
      \includegraphics[scale=0.5]{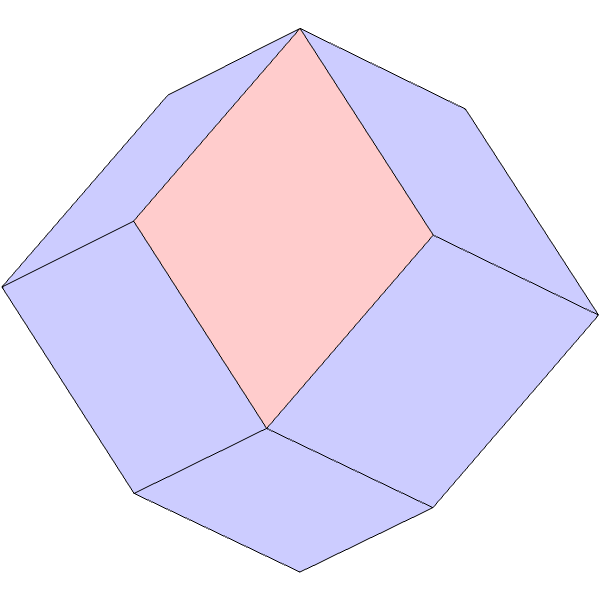}
      \caption{}
    \end{subfigure}
    \begin{subfigure}[b]{0.4\textwidth}
      \centering
      \includegraphics[scale=0.5]{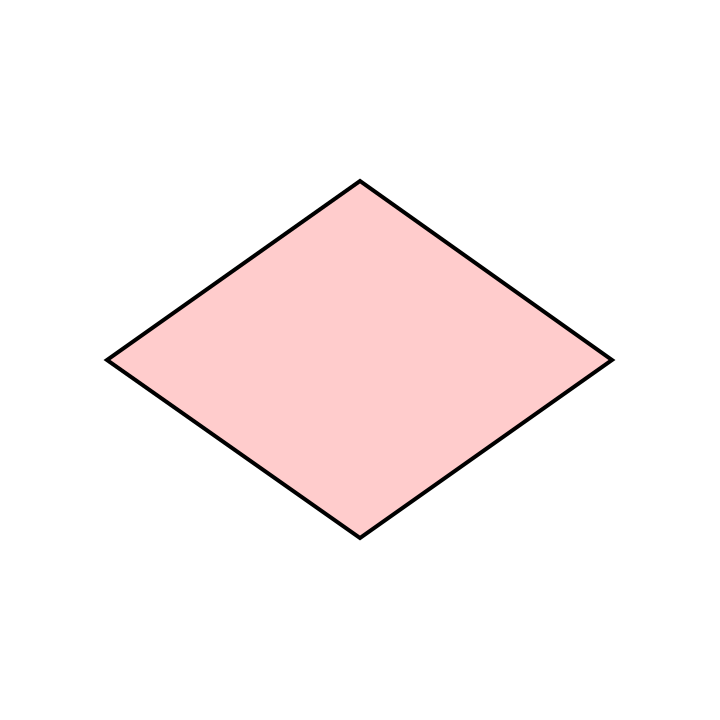}
      \caption{}
    \end{subfigure}
    \caption[Illustrations of the rhombic dodecahedron and its faces.]{
      Illustrations of (a) the rhombic dodecahedron with one face highlighted and (b) a face of the rhombic dodecahedron.
    }
    \label{fig:rhombicDodecahedron}
\end{minipage}
\par\medskip

The induced symmetry group on the faces of the rhombic dodecahedron is the dihedral group of the rhombus \[
  D_2 = \langle r, f \mid r^2 = f^2 = (rf)^2 = e \rangle,
\] where we follow the convention that \(rf\) swaps the \(v\) vertices (on the long diagonal) and \(f\) swaps the \(v'\) vertices (on the short diagonal).

\begin{theorem}
  The number of ways of tiling the faces of the rhombic dodecahedron up to full octahedral symmetry with \(\tid{}\) tiles of which
  \(\tr{}\) are fixed under \(180^\circ\) rotation,
  \(\tf{}\) are fixed under flipping across the short diagonal, and
  \(\trf{}\) are fixed under flipping across the long diagonal
  is given by the expression
  \begin{align*}
    &\frac{1}{48}
    \left(
      \tid{12} +
      6\tid3 +
      3\tid6 +
      8\tid4 +
      6\tid5\tr2 +
      \tid6 +
      6\tid3 +
      3\tid4\tf4 +
      8\tid2 +
      6\tid5\trf2
    \right) \\
    &\qquad=\frac{1}{48} \left(
      \tid{12} +
      4\tid6 +
      8\tid4 +
      12\tid3 +
      8\tid2 +
      6\tid5\tr2 +
      3\tid4\tf4 +
      6\tid5\trf2
    \right)
  \end{align*}
\end{theorem}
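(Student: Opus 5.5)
Apply the orbit-counting theorem (\Cref{thm:orbitCountingTheorem}) to the action of \(O_h\) on tilings, organized by the ten conjugacy classes of \Cref{tabl:OctahedralGroupConjugacyClasses}, exactly as in the proof for the cube. The faces of the rhombic dodecahedron correspond to the \(12\) edges of the cube (equivalently of the octahedron). Each rhombus has its long diagonal joining two \(v\)-vertices (octahedron vertices \(\pm e_i\)) and its short diagonal joining two \(v'\)-vertices (cube vertices \(\tfrac12(\pm1,\pm1,\pm1)\)). So for each representative \(A\) I compute the permutation of these \(12\) faces and, for each cycle of length \(n\), the symmetry of the face induced by \(A^n\).

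First I identify the conjugacy classes that act freely on faces; each such \(A\) contributes \(\tid{12/|A|}\).
\begin{itemize}
\item The identity gives \(\tid{12}\).
\item The \(90^\circ\) vertex rotations \(C_2^O\) have cycle type \((4^3)\), giving \(6\tid3\).
\item The \(180^\circ\) vertex rotations \(C_3^O\): an edge through the axis would have to be fixed, but no edge of the cube is perpendicular to and centered on a face axis. So the cycle type is \((2^6)\), giving \(3\tid6\).
\item The \(120^\circ\) rotations \(C_4^O\) give \((3^4)\), hence \(8\tid4\).
\item The central inversion \(C_6^O\) sends each edge to the opposite one, giving \((2^6)\) and \(\tid6\).
\item The rotoreflections \(C_7^O\) give \((4^3)\), hence \(6\tid3\).
\item The \(60^\circ\) rotoreflections \(C_9^O\) give \((6^2)\), hence \(8\tid2\).
\end{itemize}

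The remaining three classes have fixed faces.
\begin{itemize}
\item \(C_5^O\) (\(180^\circ\) edge rotations): the axis passes through the midpoints of two opposite cube edges, so exactly two rhombi are fixed and rotated by \(180^\circ\), inducing \(r\). The other \(10\) faces form five \(2\)-cycles, giving \(6\tid5\tr2\).
\item \(C_8^O\) (coordinate reflections, e.g.\ \(R_0\colon z\mapsto -z\)): the four rhombi whose long diagonal lies in the plane \(z=0\) are fixed. The reflection swaps their two \(v'\)-vertices, inducing \(f\). The other \(8\) faces form four \(2\)-cycles, giving \(3\tid4\tf4\).
\item \(C_{10}^O\) (diagonal reflections, e.g.\ \(R_1\colon y\leftrightarrow z\)): the fixed rhombi are those whose short diagonal lies in the mirror plane while their long diagonal is swapped, for example the face with \(v\)-vertices \((0,1,0),(0,0,1)\). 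Exactly two such faces exist, and the induced symmetry is \(rf\). The other ten form five \(2\)-cycles, giving \(6\tid5\trf2\).
\end{itemize}

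In each case I would confirm the fixed faces by explicit vertex computation with the given matrices, recording them in a table analogous to \Cref{tabl:CubeConjugacy}.

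Weighting by class sizes and dividing by \(48\) gives the first expression; collecting like terms (\(\tid6\) from \(C_3^O\) and \(C_6^O\), \(\tid3\) from \(C_2^O\) and \(C_7^O\)) gives the second. The main obstacle is the correct identification of the induced symmetries for the reflection classes \(C_8^O\) and \(C_{10}^O\): deciding which of \(f\) and \(rf\) each induces under the stated convention, and counting the fixed faces correctly (four versus two). I would settle this by checking one representative face explicitly for each class.
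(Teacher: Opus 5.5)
Your proposal is correct and follows the paper's proof. You apply orbit counting over the ten conjugacy classes of $O_h$, with $C_5^O$, $C_8^O$ and $C_{10}^O$ as the only classes fixing faces (two faces with induced $r$, four with induced $f$, and two with induced $rf$, respectively), while every other class acts freely and contributes $t_{\mathrm{id}}^{12/|A|}$; your explicit checks (e.g.\ under $z\mapsto -z$ and $y\leftrightarrow z$) agree with \Cref{tabl:RhombicDodecahedronConjugacy} and the paper's $f$/$rf$ convention.
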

\begin{proof}
  We proceed by cases, following \Cref{tabl:RhombicDodecahedronConjugacy}.
  \begin{description}
    \item[Case 1.] If \(A \in C_{5}^O\), which is the conjugacy class consisting of rotations by \(180^\circ\), then there are \(2\) faces that are fixed (rotated) under the action of \(A\), and the remaining \(10\) faces are partitioned into \(5\) parts of size \(|A| = 2\). Thus the number of face tilings fixed under \(A \in C_{5}^O\) is \(t_{\id}^{5}\tr{2}\).
    \item[Case 2.] If \(a \in C_{8}^O\), which is a conjugacy class consisting of reflections, then there are \(4\) faces that are fixed (reflected over the short diagonal) under the action of \(A\), and the remaining \(8\) faces are partitioned into \(4\) parts of size \(|A| = 2\). Thus the number of face tilings fixed under \(A \in C_{8}^O\) is \(t_{\id}^{4}\tf{4}\).
    \item[Case 3.] If \(A \in C_{10}^O\), which is a conjugacy class consisting of reflections, then there are \(2\) faces that are fixed (reflected over the long diagonal) under the action of \(A\), and the remaining \(10\) faces are partitioned into \(5\) parts of size \(|A| = 2\). Thus the number of face tilings fixed under \(A \in C_{10}^O\) is \(t_{\id}^{5}\trf{2}\).
    \item[Case 4.] For the remaining elements, \(A \in O_h \setminus \left(C_{5}^O \cup C_{8}^O \cup C_{10}^O\right)\), which are elements of conjugacy classes that do not appear in the table, the action of the cyclic subgroup \(\langle A \rangle\) on the faces is free. Thus each partitions the \(24\) faces into parts of size \(|A|\) and the number of face tilings fixed under \(A\) is \(t_{\id}^{24/|A|}\).
  \end{description}
\end{proof}
\begin{table}[ht]
  \[
  \begin{tabular}{P{1.7cm}P{1.6cm}lP{3.4cm}P{1.6cm}}
    conjugacy class  & cycle structure & non-maximal faces & vertex permutation generator & induced subgroup
    \\ \hline & & & & \\[-9pt]
    \multirow{2}{*}{\(\mathcal C^O_5\)} & \multirow{2}{*}{\((2^5,1^2)\)} &
      \(F_1 = (v_1,v'_1,v_2,v'_2)\) &
      \((v_1\ v_2)(v'_1\ v'_2)\) &
      \(\langle r \rangle\)
      \\
      & &
      \(F_{12} = (v_5,v'_8,v_6,v'_7)\) &
      \((v_5\ v_6)(v'_7\ v'_8)\) &
      \(\langle r \rangle\)
    \\[3pt] \hline & & & & \\[-9pt]
    \multirow{4}{*}{\(\mathcal C^O_8\)} &
    \multirow{4}{*}{\((2^4,1^4)\)} &
      \(F_1 = (v_1,v'_1,v_2,v'_2)\) &
      \((v_1)(v'_1\ v'_2)(v_2)\) &
      \(\langle f \rangle\)
      \\
      & &
      \(F_4 = (v_1,v'_4,v_5,v'_3)\) &
      \((v_1)(v'_3\ v'_4)(v_5)\) &
      \(\langle f \rangle\)
      \\
      & &
      \(F_9 = (v_2,v'_5,v_6,v'_6)\) &
      \((v_2)(v'_5\ v'_6)(v_6)\) &
      \(\langle f \rangle\)
      \\
      & &
      \(F_{12} = (v_5,v'_8,v_6,v'_7)\) &
      \((v_5)(v'_7\ v'_8)(v_6)\) &
      \(\langle f \rangle\)    \\[3pt] \hline & & & & \\[-9pt]
    \multirow{2}{*}{\(\mathcal C^O_{10}\)} &
    \multirow{2}{*}{\((2^5,1^2)\)} &
      \(F_5 = (v'_1,v_3,v'_5,v_2)\) &
      \((v'_1)(v_2\ v_3)(v'_5)\) &
      \(\langle rf \rangle\)
      \\
      & &
      \(F_8 = (v'_4,v_4,v'_8,v_5)\) &
      \((v'_4)(v_4\ v_5)(v'_8)\) &
      \(\langle rf \rangle\)
  \end{tabular}
  \]
  \caption{The cycle structure of the permutations of the vertices of a rhombic dodecahedron under each conjugacy class of \(O_h\), where \(r\) is a \(180^\circ\) rotation, \(f\) is a reflection that swaps the \(v'\) vertices, and \(rf\) is a reflection that swaps the \(v\) vertices.}
  \label{tabl:RhombicDodecahedronConjugacy}
\end{table}
\begin{corollary}
  The number of ways of tiling the rhombic dodecahedron up to rotational octahedral symmetry \((O)\) with \(\tid{}\) tiles, \(\tr{}\) of which are fixed under \(180^\circ\) rotation, is
  \begin{align*}
    &\frac{1}{24}
    \left(
      \tid{12} +
      6\tid3 +
      3\tid6 +
      8\tid4 +
      6\tid5\tr2
    \right).
  \end{align*}
\end{corollary}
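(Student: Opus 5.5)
The corollary follows by applying the orbit-counting theorem (\Cref{thm:orbitCountingTheorem}) to the rotational subgroup \(O \le O_h\) instead of to all of \(O_h\). The fixed-tiling counts from the preceding theorem are reused unchanged. The conjugacy classes of \(O\) are exactly \(C_1^O,\dots,C_5^O\) (\Cref{tabl:OctahedralGroupConjugacyClasses}), with sizes \(1,6,3,8,6\) summing to \(|O| = 24\). For any \(A \in O\), the number \(|X^A|\) of tilings fixed by \(A\) depends only on the permutation of faces by \(A\) and the induced face symmetries. Neither depends on whether we regard \(A\) as an element of \(O\) or of \(O_h\).

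The key steps, in order, are these. First, I take the five terms of the \(O_h\) sum coming from \(C_1^O,\dots,C_5^O\), as computed in the proof of the theorem: \(\tid{12}\) for the identity, \(\tid{3}\) for each of the \(6\) elements of order \(4\) in \(C_2^O\), \(\tid{6}\) for each of the \(3\) elements of \(C_3^O\), \(\tid{4}\) for each of the \(8\) elements of \(C_4^O\), and \(\tid{5}\tr{2}\) for each of the \(6\) elements of \(C_5^O\) (Case 1 of that proof). Second, I note that \(C_2^O, C_3^O, C_4^O\) do not appear in \Cref{tabl:RhombicDodecahedronConjugacy}. Hence \(\langle A\rangle\) acts freely on the \(12\) faces for these classes, giving \(\tid{12/|A|}\) fixed tilings. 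Third, I sum these weighted terms and divide by \(24\) rather than \(48\).

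The only point requiring care is the freeness claim for \(C_2^O\), \(C_3^O\), and \(C_4^O\). For example, one must check that no \(90^\circ\) rotation about a coordinate axis fixes a rhombic face, and that no power of such a rotation does so either. This holds because the axes of these rotations pass through the degree-\(4\) and degree-\(3\) vertices \(v\) and \(v'\), not through face centers. Since the \(180^\circ\) rotation in \(C_3^O\) is the square of an element of \(C_2^O\), it too fixes no face. This was already established in the theorem's table, so nothing new needs to be verified.

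Note that the tiles' values \(\tf{}\) and \(\trf{}\) drop out entirely, since they arose only from reflections, which lie outside \(O\).
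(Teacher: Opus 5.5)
Your proposal is correct and matches the paper's approach. The paper obtains the corollary by keeping only the terms of the \(O_h\) orbit-counting sum from the rotational classes \(C_1^O,\dots,C_5^O\), which are exactly the conjugacy classes of \(O\), and dividing by \(|O| = 24\) instead of \(48\); your check that the \(C_2^O, C_3^O, C_4^O\) rotation axes pass through vertices rather than face centers is a sound supplement to the paper's table-based freeness claim.
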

Note that after substituting \(\tr{}=\tf{}\) this is identical to \Cref{thm:TriakisTetrahedron}.

\begin{example}
    Starting in 1974, the artist Sol Lewitt created a series of artworks called ``Incomplete Open Cubes,'' which consist of the \(122\) proper subsets of edges of the cube that are connected and non-planar up to rotations (but not reflections) of the cube \cite{Baume2001}, an example of which is illustrated in \Cref{fig:SolLewittCube}.

    When tiling the rhombic dodecahedron by the tiles in \Cref{fig:SolLewittRD_face}, \(t_{\id} = t_r = 2\), because there are two tile designs which are both fixed under \(180^\circ\) rotations. Thus, there are \[
        \frac{1}{24} \left(2^{12} + 6 \cdot 2^3 + 3 \cdot 2^6 + 8 \cdot 2^3 + 6 \cdot 2^5 \cdot 2^2\right) = 218
    \] non-necessarily-connected subsets of edge of the cube. Thus, there are \(218-122=96\) rotationally distinct subsets of edges of the cube which either include all edges, no edges, are disconnected, or are planar.

    The combinatorics and history of LeWitt's ``Incomplete Open Cubes'' is animated in Paul Dancstep's video ``Exploration \& Epiphany.'' \cite{Dancstep2025}

    By instead using a blank tile design and a tile design with a line connecting the long diagonal of the rhombic faces, the tilings correspond to edge-subsets of the octahedron, some of which correspond to the \(185\) ``Variations of Incomplete Open Octahedra,'' by Mikael Vejdemo-Johansson \cite{VejdemoJohansson2025art}. Because there are also \(218\) edge-subsets, there are \(218 - 185 = 33\) edge subsets of the octahedron that are complete, empty, planar, or disconnected.
\end{example}

\begin{figure}
    \centering
    \begin{minipage}[b]{0.3\textwidth}
      \centering
      \includegraphics[width=\linewidth]{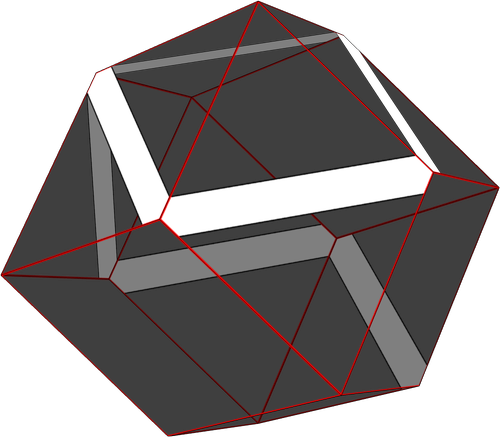}
      \subcaption{}\label{fig:SolLewittRD}
    \end{minipage}
    \hfill
    \begin{minipage}[b]{0.36\textwidth}
      \centering
      \includegraphics[width=0.48\linewidth]{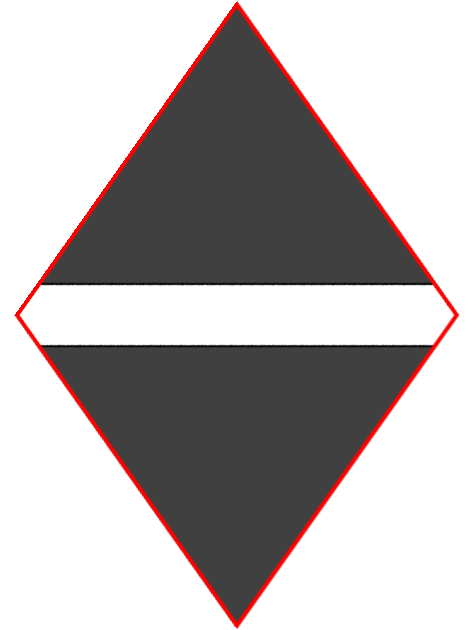}
      \hfill
      \includegraphics[width=0.48\linewidth]{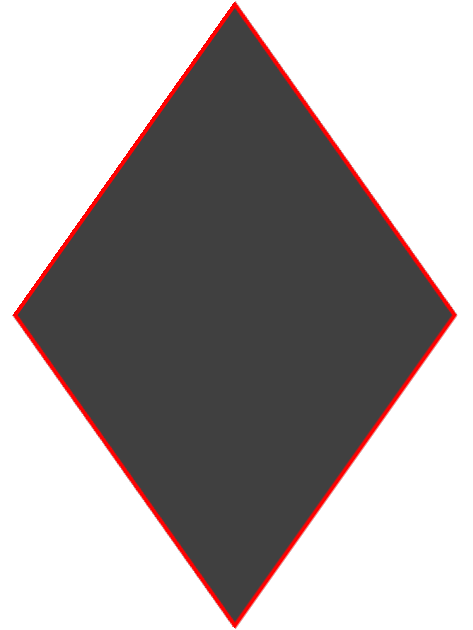}
      \subcaption{}\label{fig:SolLewittRD_face}
    \end{minipage}
    \hfill
    \begin{minipage}[b]{0.25\textwidth}
      \centering
      \includegraphics[width=\linewidth]{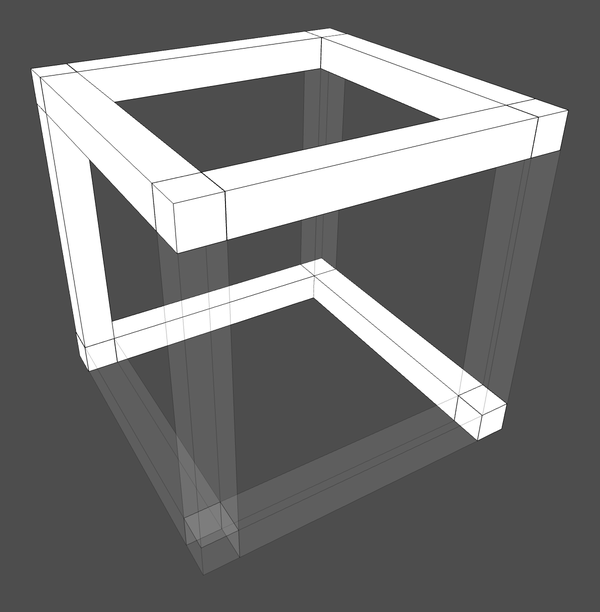}
      \subcaption{}\label{fig:SolLewittCube}
    \end{minipage}
    \caption{
      An example of a tiling of rhombic dodecahedron (a) by semi-transparent tiles in (b), and (c) a corresponding model which is reminiscent of a \(7\)-edge subset of the \(1\)-skeleton of a cube from Sol LeWitt's 1974 ``Incomplete Open Cubes''.
    }
    \label{fig:SolLewitt}
\end{figure}

\begin{oeis*}
  The number of \(n\)-colorings of the faces of the rhombic dodecahedron up to the \(24\) symmetries of the rotational octahedral group \(O_h\) appears in the OEIS as sequence \oeis{A060530} (\(0\)-indexed):
  \[
    0, 1, 218, 22815, 703760, 10194250, 90775566, 576941778, 2863870080, \ldots,
  \]
  and the number of colorings up to the \(48\) symmetries of the octahedral group \(O_h\) appears in the OEIS as sequence \oeis{A199406}: \[
   1, 144, 12111, 358120, 5131650, 45528756, 288936634, 1433251296, 5887880415, \ldots.
  \]
\end{oeis*}

\subsection{Triakis octahedron and deltoidal icositetrahedron}
The triakis octahedron, illustrated in \Cref{subfig:TriakisOctahedronPolyhedron}, is a Catalan solid with \(24\) isosceles triangular faces and is the polyhedral dual of the truncated cube. Its \(14\) vertices are generated by the orbits of \(v=(1,0,0)\) and \(v'=\alpha(1,1,1)\), where \(\alpha = \sqrt{2}-1\) under the action of its isometry group, the full octahedral group \(O_h\) of order \(48\), and the orbits have size \(6\) and \(8\) respectively.

The deltoidal icositetrahedron, illustrated in \Cref{subfig:DeltoidalIcositetrahedronPolyhedron}, is a Catalan solid with \(24\) deltoidal faces and is the polyhedral dual of the rhombicuboctahedron. Its \(26\) vertices are generated by the orbits of \(v=(1,0,0)\), \(v'=\alpha(1,1,1)\), and \(v''=\beta(1,1,0)\) where \(\alpha = \sqrt{2}/2\) and \(\beta = (2 \sqrt{2}+1)/7\), under the action of its isometry group, the full octahedral group \(O_h\) of order \(48\), and the orbits have size \(6\), \(8\), and \(12\) respectively.

\par\medskip\noindent
\begin{minipage}{\textwidth}\captionsetup{type=figure}
  \centering
  \begin{subfigure}[b]{0.35\textwidth}
    \centering
    \includegraphics[scale=0.4]{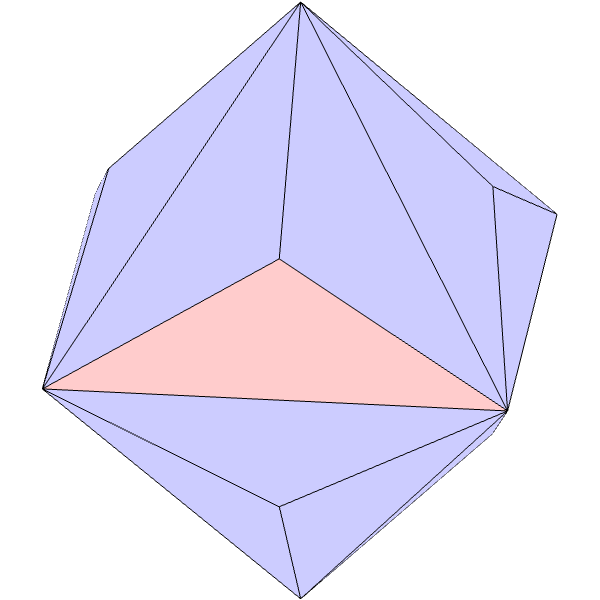}
    \caption{}\label{subfig:TriakisOctahedronPolyhedron}
  \end{subfigure}
  \begin{subfigure}[b]{0.3\textwidth}
    \centering
    \includegraphics[scale=0.4]{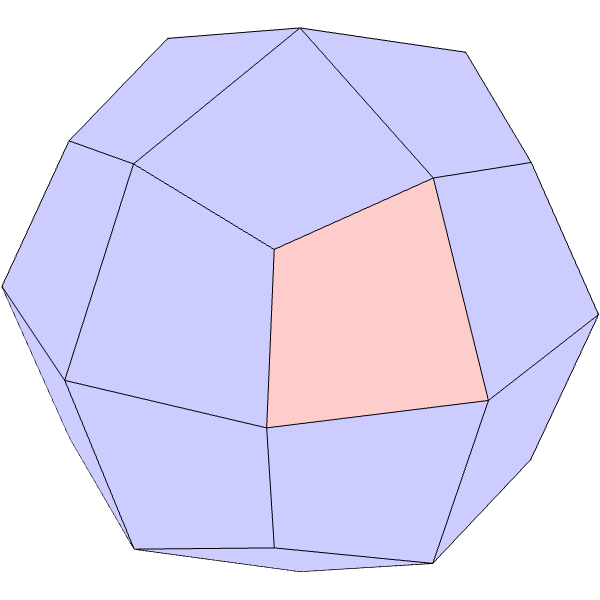}
    \caption{}\label{subfig:DeltoidalIcositetrahedronPolyhedron}
  \end{subfigure}
  \\
  \begin{subfigure}[b]{0.35\textwidth}
    \centering
    \includegraphics[scale=0.4]{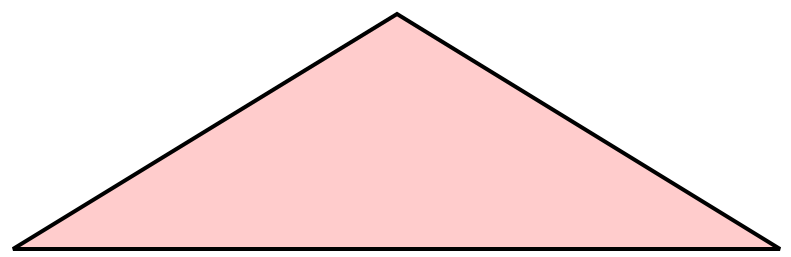}
    \caption{}\label{subfig:TriakisOctahedronFace}
  \end{subfigure}
  \begin{subfigure}[b]{0.3\textwidth}
    \centering
    \includegraphics[scale=0.4]{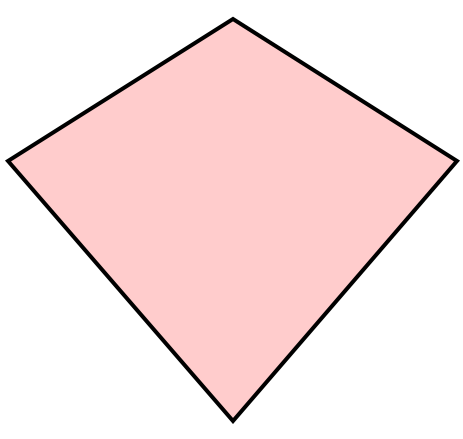}
    \caption{}
  \end{subfigure}
  \caption[Illustrations of the pentagonal hexecontahedron and its faces.]{
    Illustrations of
    (a) the triakis octahedron with a face highlighted,
    (b) the deltoidal icositetrahedron with a face highlighted,
    (c) a face of the triakis octahedron, and
    (d) a face of the deltoidal icositetrahedron.
  }
  \label{fig:TriakisOctahedron}
\end{minipage}
\par\medskip

The induced symmetry group on the faces of both the triakis octahedron and of the deltoidal icositetrahedron is the reflection group \(\langle f \mid f^2 = e \rangle\).

\begin{theorem}
  The number of ways of tiling the faces of the triakis octahedron (equivalently deltoidal icositetrahedron) up to full octahedral symmetry \(O_h\) with \(\tid{}\) tiles of which \(\tf{}\) are fixed under horizontal reflection, is given by the expression
  \begin{align*}
    &\frac{1}{48}
    \left(
      \tid{24} +
      6\tid6 +
      3\tid{12} +
      8\tid8 +
      6\tid{12} +
      \tid{12} +
      6\tid6 +
      3\tid{12} +
      8\tid4 +
      6\tid{10}\tf{4}
    \right) \\
    &\qquad=\frac{1}{48} \left(
      \tid{24} +
      13\tid{12} +
      8\tid8 +
      12\tid6 +
      8\tid4 +
      6\tid{10}\tf{4}
    \right).
  \end{align*}
\end{theorem}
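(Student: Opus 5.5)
I would apply the orbit-counting theorem (\Cref{thm:orbitCountingTheorem}) to the action of \(O_h\) on tilings, summing \(|X^A|\) over the ten conjugacy classes of \Cref{tabl:OctahedralGroupConjugacyClasses} weighted by class size. The key observation is that the faces of the triakis octahedron are in natural bijection with the \(24\) faces of the disdyakis dodecahedron paired up: equivalently, each triakis face is determined by a pair (edge of the octahedron, adjacent octahedron face), so the \(O_h\)-action on faces is isomorphic to the action on flags-up-to-one-reflection. The same description holds for the deltoidal icositetrahedron, whose faces correspond to (vertex of octahedron, adjacent face) pairs, i.e. to edges of the cube paired with endpoints. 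In both cases the face stabilizer has order \(48/24 = 2\) and is generated by a single reflection, which is why the induced group is \(\langle f\rangle\). This gives the \emph{equivalently} in the statement: both face sets are the coset space \(O_h/\langle s\rangle\) for a reflection \(s\) in the class \(C_{10}^O\), so the permutation actions are isomorphic, compatibly with the induced \(f\).

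For the fixed-tiling count, a tiling fixed by \(A\) is determined by one tile per \(\langle A\rangle\)-orbit. On an orbit of size \(m\), \(A^m\) stabilizes a face, so \(A^m\) is either the identity (giving a factor \(t_{\id}\)) or the stabilizing reflection (giving a factor \(t_f\)). Since every face stabilizer is a single reflection, the only way \(A^m\ne e\) stabilizes a face is if \(A^m\) is a reflection. A reflection \(A^m\) arises as a nontrivial power only when \(A\) itself is a reflection (classes \(C_8^O\), \(C_{10}^O\)); note that \(C_7^O\) and \(C_9^O\) have squares that are rotations. So for all non-reflection classes the action of \(\langle A\rangle\) is free and \(|X^A| = t_{\id}^{24/|A|}\). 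Plugging in the orders gives \(t_{\id}^{24}, t_{\id}^6, t_{\id}^{12}, t_{\id}^{8}, t_{\id}^{12}, t_{\id}^{12}, t_{\id}^{6}, t_{\id}^{4}\) for \(C_1,C_2,C_3,C_4,C_5,C_6,C_7,C_9\).

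For the reflections, the number of faces fixed by a reflection \(s\) equals the number of cosets \(g\langle s_0\rangle\) with \(s\in g\langle s_0\rangle g^{-1}\). This is \(|C_{O_h}(s)|\cdot[\text{$s$ conjugate to } s_0]/2\), which is \(0\) if \(s\) is not conjugate to \(s_0\), and otherwise \(48/(6\cdot 2) = 4\). So for \(C_8^O\) no face is fixed, the action is free, and \(|X^A| = t_{\id}^{12}\). For \(C_{10}^O\), \(4\) faces are fixed (reflected) and the other \(20\) pair up, giving \(t_{\id}^{10}t_f^4\).

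The main obstacle is confirming that the stabilizing reflection really lies in \(C_{10}^O\) (a diagonal mirror such as \(R_1\)) rather than \(C_8^O\) (a coordinate mirror). For the triakis octahedron, a face \((v_1, v'_i, v'_j)\) with \(v_1=(1,0,0)\) and two cube-type vertices \(\alpha(1,\pm1,\pm1)\) sharing a coordinate is symmetric under a coordinate mirror. That would force the roles of \(C_8\) and \(C_{10}\) to swap, but the class sizes are \(3\) vs.\ \(6\), which changes the formula. So I would check explicitly against the vertex coordinates, as in the paper's tables, and if needed recount the fixed faces as \(48/(3\cdot2)=8\), with the formula adjusted accordingly. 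As a final check, I would sum everything, collect like terms, and test with \(t_{\id}=t_f=1\), where the total must be \(1\).
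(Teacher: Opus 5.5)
\textbf{Verdict: genuine gap.} Your argument is conditional on the face stabilizer lying in \(C_{10}^O\), which you never establish, and the check you propose for it points the wrong way.

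Your coset-space route is more conceptual than the paper's. The paper reads the four \(C_{10}^O\)-fixed faces off a computed table (\Cref{tabl:TriakisOctahedronConjugacy,tabl:DeltoidalIcositetrahedronConjugacy}) and asserts freeness for every other class. Your fixed-point count \(|C_{O_h}(s)|\cdot|s^{O_h}\cap\langle s_0\rangle|/2\), together with the observation that no non-reflection has a reflection among its powers, would reproduce the formula. (For \(C_9^O\) you should also note that \(A^3=-I\) is not a reflection.)

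All of this depends on the face stabilizer \(\langle s_0\rangle\) being generated by a diagonal mirror in \(C_{10}^O\). That is exactly the fact separating the statement from the tetrakis hexahedron's formula, where \(3\tid{12}+6\tid{10}\tf{4}\) is replaced by \(3\tid{8}\tf{8}+6\tid{12}\). Your proposal asserts it without reason in the first paragraph, doubts it in the last, and leaves the answer to be ``adjusted accordingly.'' As written, the argument proves ``this formula or the tetrakis one,'' and your tentative evidence favours the wrong one.

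The triangle \((v_1,v'_i,v'_j)\) you describe, with one octahedron-type vertex and two cube-type vertices sharing a coordinate, is a face of the tetrakis hexahedron (compare \Cref{tabl:TetrakisHexahedronConjugacy}). It is not a face of the triakis octahedron. A triakis octahedron face has one apex and two adjacent octahedron vertices, e.g.\ \(\alpha(1,1,1)\), \((1,0,0)\), \((0,1,0)\); this has the same shape as \(F_9=(v'_1,v_3,v_2)\) in the paper's table.

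This face is fixed by the mirror \(x\leftrightarrow y\), which is conjugate to \(R_1\) and hence lies in \(C_{10}^O\). The coordinate mirror \(z\mapsto -z\) fixes the edge from \((1,0,0)\) to \((0,1,0)\) pointwise but swaps the two octahedron faces containing it, so it moves the triangle.

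Your own description of faces as (edge, adjacent face) pairs already settles this. The only nontrivial symmetry fixing such a pair is the reflection in the plane through the origin, the face center, and the edge midpoint. That plane is the perpendicular bisector of the edge, a diagonal mirror. Likewise, the (vertex, adjacent face) description of the deltoidal icositetrahedron gives the mirror through the origin, \((1,0,0)\), and the face center \((1,1,1)/3\), namely \(y\leftrightarrow z\), again in \(C_{10}^O\).

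With that step supplied, your count gives \(8/2=4\) fixed faces for \(C_{10}^O\) and none for \(C_8^O\), and the proof is complete.
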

\begin{proof}
  We proceed by cases, following \Cref{tabl:TriakisOctahedronConjugacy,tabl:DeltoidalIcositetrahedronConjugacy}.
  \begin{description}
    \item[Case 1.] If \(A \in C_{10}^O\), which is a conjugacy class consisting of reflections, then there are \(4\) faces that are fixed (reflected) under the action of \(A\), and the remaining \(20\) faces are partitioned into \(10\) parts of size \(|A| = 2\). Thus the number of face tilings fixed under \(A \in C_{10}^O\) is \(t_{\id}^{10}\tr{4}\).
    \item[Case 2.] For the remaining elements, \(A \in O_h \setminus C_{10}^O\), which are elements of conjugacy classes that do not appear in the table, the action of the cyclic subgroup \(\langle A \rangle\) on the faces is free. Thus each partitions the \(24\) faces into parts of size \(|A|\) and the number of face tilings fixed under \(A\) is \(t_{\id}^{24/|A|}\).
  \end{description}
\end{proof}
\begin{table}
  \[
  \begin{tabular}{P{1.7cm}P{1.6cm}lP{3.4cm}P{1.6cm}}
    conjugacy class  & cycle structure & non-maximal faces & vertex permutation generator & induced subgroup
    \\ \hline & & & & \\[-9pt]
    \multirow{4}{*}{\(\mathcal C^O_{10}\)} &
    \multirow{4}{*}{\((2^{10},1^4)\)} &
      \(F_9 = (v'_1,v_3,v_2)\) &
      \((v'_1)(v_2\ v_3)\) &
      \(\langle f \rangle\)
      \\
      & &
      \(F_{12} = (v'_4,v_4,v_5)\) &
      \((v'_4)(v_4\ v_5)\) &
      \(\langle f \rangle\)
      \\
      & &
      \(F_{13} = (v_2,v_3,v'_5)\) &
      \((v_2\ v_3)(v'_5)\) &
      \(\langle f \rangle\)
      \\
      & &
      \(F_{21} = (v_4,v'_8,v_5)\) &
      \((v_4\ v_5)(v'_8)\) &
      \(\langle f \rangle\)
    \end{tabular}
  \]
  \caption{The cycle structure of the permutations of the vertices of a triakis octahedron under each conjugacy class of \(O_h\).}
  \label{tabl:TriakisOctahedronConjugacy}
\end{table}
\begin{table}
  \[
  \begin{tabular}{P{1.7cm}P{1.6cm}lP{3.4cm}P{1.6cm}}
    conjugacy class  & cycle structure & non-maximal faces & vertex permutation generator & induced subgroup
    \\ \hline & & & & \\[-9pt]
    \multirow{4}{*}{\(\mathcal C^O_{10}\)} &
    \multirow{4}{*}{\((2^{10},1^4)\)} &
      \(F_2 = (v_1,v''_2,v'_1,v''_1)\) &
      \((v_1)(v''_1\ v''_2)(v'_1)\) &
      \(\langle f \rangle\)
      \\
      & &
      \(F_3 = (v_1,v''_3,v'_4,v''_4)\) &
      \((v_1)(v''_3\ v''_4)(v'_4)\) &
      \(\langle f \rangle\)
      \\
      & &
      \(F_{21} = (v'_5,v''_{10},v_6,v''_9)\) &
      \((v'_5)(v''_9\ v''_{10})(v_6)\) &
      \(\langle f \rangle\)
      \\
      & &
      \(F_{24} = (v'_8,v''_{11},v_6,v''_{12})\) &
      \((v'_8)(v''_{11}\ v''_{12})(v_6)\) &
      \(\langle f \rangle\)
    \end{tabular}
  \]
  \caption{The cycle structure of the permutations of the vertices of a deltoidal icositetrahedron under each conjugacy class of \(O_h\).
  }
  \label{tabl:DeltoidalIcositetrahedronConjugacy}
\end{table}
\begin{corollary}
  The number of ways of tiling the faces of the triakis octahedron (equivalently deltoidal icositetrahedron) up to rotational octahedral symmetry \(O\) with \(\tid{}\) is given by the expression
  \begin{align*}
    &\frac{1}{24}
    \left(
      \tid{24} +
      6\tid6 +
      3\tid{12} +
      8\tid8 +
      6\tid{12}
    \right).
  \end{align*}
\end{corollary}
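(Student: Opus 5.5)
This corollary follows by applying the orbit-counting theorem (\Cref{thm:orbitCountingTheorem}) to the subgroup \(O \le O_h\) of order \(24\) in place of \(O_h\). The fixed-point counts are already available from the proof of the preceding theorem: \(|X^A|\) depends only on \(A\) itself, through the cycle structure of \(\langle A\rangle\) on the faces and the induced symmetries of \(A^n\). So I would keep only the terms coming from conjugacy classes contained in \(O\). By \Cref{tabl:OctahedralGroupConjugacyClasses}, these are \(C_1^O,\dots,C_5^O\), of sizes \(1,6,3,8,6\) and orders \(1,4,2,3,2\).

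\textbf{Step 1: free action.} I would check that none of \(C_1^O,\dots,C_5^O\) appears in \Cref{tabl:TriakisOctahedronConjugacy} or \Cref{tabl:DeltoidalIcositetrahedronConjugacy}; the only non-free class listed there is the reflection class \(C_{10}^O\). So for each rotation \(A\in O\), the cyclic group \(\langle A\rangle\) acts freely on the \(24\) faces. The faces then split into \(24/|A|\) orbits of size \(|A|\), and a tiling fixed by \(A\) is exactly a choice of an arbitrary tile on one face per orbit, since no nontrivial induced symmetry constrains the design. This gives \(|X^A| = \tid{24/|A|}\).

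\textbf{Step 2: sum the classes.} Weighting by class size gives the terms \(\tid{24}\) (identity), \(6\tid6\) (order \(4\)), \(3\tid{12}\) (order \(2\), vertex rotations), \(8\tid8\) (order \(3\)) and \(6\tid{12}\) (order \(2\), edge rotations). Dividing by \(|O|=24\) yields the stated expression. Note that \(\tf{}\) does not appear, because no element of \(O\) induces the reflection on a face.

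\textbf{Main obstacle.} The only real content is the freeness claim for rotations. Geometrically, I expect it holds because a rotation fixing a face must fix that face's center and hence its axis. The induced isometry must then be a rotation of the face, but the induced group on an isosceles triangle or a deltoid is only \(\langle f\rangle\), which contains no nontrivial rotation. So any rotation stabilizing a face acts trivially on it, and since \(A\) is a nonidentity rotation it cannot fix a face at all. The same argument applies to every power \(A^n\), so no orbit of \(\langle A\rangle\) is shorter than \(|A|\).
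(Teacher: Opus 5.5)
Your proof is correct and follows the paper's route. The corollary is the $O_h$ orbit-counting computation restricted to the five rotation classes $C_1^O,\dots,C_5^O$, each of which acts freely on the $24$ faces and so contributes $\tid{24/|A|}$. Where you differ is in justifying freeness geometrically: a nonidentity rotation stabilizing a face would induce a nontrivial rotation of an isosceles triangle or kite. The paper instead relies on rotation classes being absent from its computed tables, so your argument adds a self-contained check.
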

\begin{oeis*}
  The number of \(n\)-colorings of the faces of the triakis octahedron (alternatively of the deltoidal icositetrahedron) up to the \(24\) symmetries of the rotational octahedral group \(O\) has been added to the OEIS as sequence \oeis{A378475} (\(0\)-indexed): \[
    0, 1, 700688, 11768099013, 11728130343936, 2483526957328125, 197432556580265616, \ldots,
  \]
  and number of colorings up to the \(48\) symmetries of the full octahedral group \(O_h\) appears in the OEIS as sequence \oeis{A274900} (\(1\)-indexed): \[
    1, 352744, 5884691769, 5864100125056, 1241764261950625, 98716288267057896, \ldots.
  \]
\end{oeis*}

\subsection{Tetrakis hexahedron}
The tetrakis hexahedron, illustrated in \Cref{fig:polyhedron_TetrakisHexahedron}, is a Catalan solid with \(24\) isosceles triangular faces and is the polyhedral dual of the truncated octahedron.
Its \(14\) vertices can be generated by the orbits of \(v=(1,0,0)\) and \(v'=\frac23(1,1,1)\) under the action of its isometry group, the full octahedral group \(O_h\) of order \(48\). The orbits have size \(6\) and \(8\) respectively.

\par\medskip\noindent
\begin{minipage}{\textwidth}\captionsetup{type=figure}
  \centering
    \begin{subfigure}[b]{0.35\textwidth}
      \centering
      \includegraphics[scale=0.5]{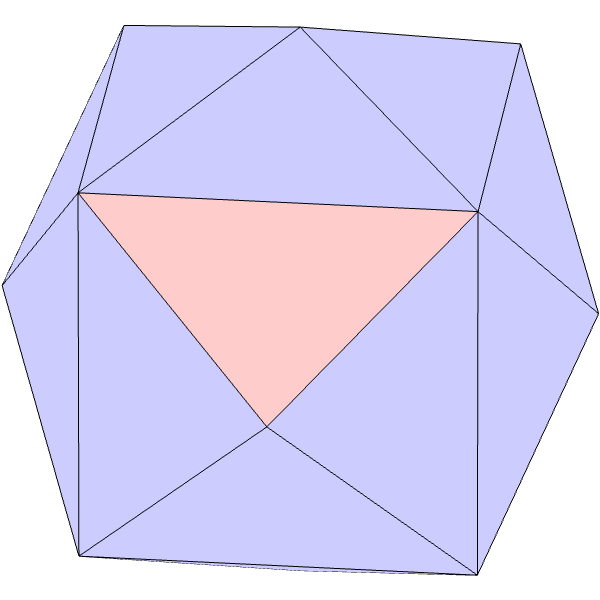}
      \caption{}
    \end{subfigure}
    \begin{subfigure}[b]{0.35\textwidth}
      \centering
      \includegraphics[scale=0.5]{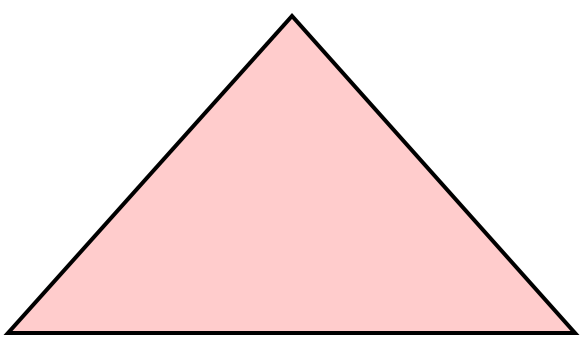}
      \caption{}
    \end{subfigure}
    \caption[Illustrations of the tetrakis hexahedron and its faces.]{
      Illustrations of (a) the tetrakis hexahedron with one face highlighted and (b) a face of the tetrakis hexahedron.
    }
    \label{fig:polyhedron_TetrakisHexahedron}
\end{minipage}
\par\medskip

The induced symmetry group of the faces of the tetrakis hexahedron is the reflection group \(\langle f \mid f^2 = e \rangle\).

\begin{theorem}
  The number of ways of tiling the faces of the tetrakis hexahedron up to full octahedral symmetry with \(\tid{}\) tiles of which
  \(\tf{}\) are fixed under flipping
  is given by the expression
  \begin{align*}
    &\frac{1}{48}
    \left(
      \tid{24} +
      6\tid6 +
      3\tid{12} +
      8\tid8 +
      6\tid{12} +
      \tid{12} +
      6\tid6 +
      3\tid8\tf8 +
      8\tid4 +
      6\tid{12}
    \right) \\
    &\qquad=\frac{1}{48} \left(
      \tid{24} +
      16\tid{12} +
      8\tid8 +
      12\tid6 +
      8\tid4 +
      3\tid8\tf8
    \right).
  \end{align*}
\end{theorem}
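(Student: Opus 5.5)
The proof follows the same template as the preceding theorems. By the orbit-counting theorem (\Cref{thm:orbitCountingTheorem}), we sum over the ten conjugacy classes of \(O_h\) in \Cref{tabl:OctahedralGroupConjugacyClasses} the number of tilings fixed by a representative \(A\), weighted by the class size. For each \(A\) we need the cycle structure of \(\langle A\rangle\) on the \(24\) faces and, for any cycle of length \(n<|A|\), the symmetry that \(A^n\) induces on a face of that cycle. A tiling fixed by \(A\) is then determined by one tile per cycle. That tile is unconstrained (\(\tid{}\) choices) on a free cycle, and must be fixed by the induced symmetry on a short cycle.

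First, I will find which classes act freely on the faces. The faces of the tetrakis hexahedron are the \(24\) triangles obtained by splitting each cube face (centered at a \(v\)-vertex, the apex of a pyramid) into four triangles with base a cube edge joining two \(v'\)-vertices. Equivalently, a face corresponds to a pair (cube face, incident cube edge), i.e.\ a flag of face and edge. Each face has a single nontrivial induced symmetry \(f\): the reflection fixing the apex \(v\) and swapping the two \(v'\) vertices of the base. An element fixes a face exactly when it fixes that face's apex and base edge. A rotation fixing a cube-face center and an edge of that cube face must be the identity. A reflection fixing both must be the reflection through the plane containing the cube axis and the midpoint of that edge, i.e.\ a reflection parallel to a coordinate plane, which is class \(C_8^O\) (representative \(R_0\)). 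Reflections in \(C_{10}^O\), such as \(R_1\), pass through cube-face diagonals, so they fix cube vertices rather than edge midpoints and fix no triangle.

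I will then check the other classes by powers. For the classes of orders \(4\) and \(6\) (\(C_2^O, C_7^O, C_9^O\)), the relevant powers are \(A^2\) or \(A^3\), which are rotations, or the central inversion for \(C_9^O\) cubed. None of these fixes a triangle, so every class except \(C_8^O\) acts freely and contributes \(\tid{24/|A|}\). With the orders from \Cref{tabl:OctahedralGroupConjugacyClasses}, this gives the terms \(\tid{24},6\tid6,3\tid{12},8\tid8,6\tid{12},\tid{12},6\tid6,8\tid4,6\tid{12}\).

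For \(A=R_0\), the reflection \(z\mapsto -z\), I will count fixed faces directly. The apex must be one of the four cube-face centers \((\pm1,0,0),(0,\pm1,0)\) on the mirror. On each such cube face, the edges bisected by the plane \(z=0\) are the two vertical edges, giving \(2\) triangles per apex and \(8\) fixed faces in total. On each of these, \(R_0\) swaps the two \(v'\) endpoints \((\pm\tfrac23,\pm\tfrac23,\pm\tfrac23)\) differing in the \(z\)-sign, so the induced symmetry is \(f\). The remaining \(16\) faces form \(8\) transpositions. This gives \(\tid8\tf8\) with weight \(3\).

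The main obstacle is making sure no other class, especially \(C_{10}^O\) or the rotoreflections, fixes a face or has a short orbit. I would confirm this with the flag description above, or by computing the face permutations from the explicit coordinates as the paper's tables do. Adding the ten terms and collecting like powers gives the stated simplified form.
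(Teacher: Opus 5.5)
Your proof is correct and follows the paper's argument. You apply orbit counting over the ten classes of \(O_h\); every class except \(C_8^O\) acts freely, and each coordinate reflection fixes \(8\) faces (inducing \(f\)) and swaps the other \(16\) in pairs, giving \(3\tid{8}\tf{8}\). The only difference is that you justify the fixed-face data geometrically via the (cube face, edge) description rather than reading it off the computed table, and your \(\tf{8}\) corrects the paper's typo \(\tr{8}\) in Case 1.
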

\begin{proof}
  We proceed by cases, following \Cref{tabl:TetrakisHexahedronConjugacy}.
  \begin{description}
    \item[Case 1.] If \(A \in C_8^O\), which is the conjugacy class consisting of reflections, then there are \(8\) faces that are fixed (reflected) under the action of \(A\), and the remaining \(16\) faces are partitioned into \(8\) parts of size \(|A| = 2\). Thus the number of face tilings fixed under \(A \in C_8^O\) is \(t_{\id}^{8}\tr{8}\).
    \item[Case 2.] For the remaining elements, \(A \in O_h \setminus C_8^O\), which are elements of conjugacy classes that do not appear in the table, the action of the cyclic subgroup \(\langle A \rangle\) on the faces is free. Thus each partitions the \(24\) faces into parts of size \(|A|\) and the number of face tilings fixed under \(A\) is \(t_{\id}^{24/|A|}\).
  \end{description}
\end{proof}

\begin{table}[ht]
  \[
  \begin{tabular}{P{1.7cm}P{1.6cm}lP{3.4cm}P{1.6cm}}
    conjugacy class  & cycle structure & non-maximal faces & vertex permutation generator & induced subgroup
    \\ \hline & & & & \\[-9pt]
    \multirow{8}{*}{\(\mathcal C^O_8\)} &
    \multirow{8}{*}{\((2^8,1^8)\)} &
      \(F_1 = (v_1, v'_1, v'_2)\) &
      \((v_1)(v'_1\ v'_2)\) &
      \(\langle f \rangle\)
      \\
      & &
      \(F_4 = (v_1, v'_4, v'_3)\) &
      \((v_1)(v'_3\ v'_4)\) &
      \(\langle f \rangle\)
      \\
      & &
      \(F_6 = (v'_1, v_2, v'_2)\) &
      \((v'_1\ v'_2)(v_2)\) &
      \(\langle f \rangle\)
      \\
      & &
      \(F_{12} = (v'_3, v'_4, v_5)\) &
      \((v'_3\ v'_4)(v_5)\) &
      \(\langle f \rangle\)
      \\
      & &
      \(F_{17} = (v_2, v'_5, v'_6)\) &
      \((v_2)(v'_5\ v'_6)\) &
      \(\langle f \rangle\)
      \\
      & &
      \(F_{20} = (v_5, v'_8, v'_7)\) &
      \((v_5)(v'_7\ v'_8)\) &
      \(\langle f \rangle\)
      \\
      & &
      \(F_{22} = (v'_5, v_6, v'_6)\) &
      \((v'_5\ v'_6)(v_6)\) &
      \(\langle f \rangle\)
      \\
      & &
      \(F_{24} = (v'_7, v'_8, v_6)\) &
      \((v'_7\ v'_8)(v_6)\) &
      \(\langle f \rangle\)
  \end{tabular}
  \]
  \caption{The cycle structure of the permutations of the vertices of a tetrakis hexahedron under each conjugacy class of \(O_h\).}
  \label{tabl:TetrakisHexahedronConjugacy}
\end{table}

\begin{corollary}
  The number of ways of tiling the faces of the tetrakis hexahedron
  up to rotational octahedral symmetry with \(\tid{}\) tiles
  is given by the expression
  \begin{equation*}
    \frac{1}{24}
    \left(
      \tid{24} +
      6\tid6 +
      3\tid{12} +
      8\tid8 +
      6\tid{12}
    \right) =
    \frac{1}{24} \left(
      \tid{24} +
      9\tid{12} +
      8\tid8 +
      6\tid6
    \right).
  \end{equation*}
\end{corollary}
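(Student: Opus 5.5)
The corollary follows by restricting the proof of the preceding theorem to the rotational subgroup \(O \le O_h\), the order-\(24\) subgroup of determinant-one matrices. By \Cref{tabl:OctahedralGroupConjugacyClasses}, its conjugacy classes are exactly \(C_1^O, \dots, C_5^O\), of sizes \(1, 6, 3, 8, 6\) and element orders \(1, 4, 2, 3, 2\). The orbit-counting theorem (\Cref{thm:orbitCountingTheorem}) applied to \(O\) acting on tilings gives the count as \(\frac{1}{24}\) times the sum, over these five classes weighted by class size, of the number of tilings fixed by a representative.

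The key observation is that the only class in \Cref{tabl:TetrakisHexahedronConjugacy} is \(C_8^O\), which consists of reflections and so does not lie in \(O\). Hence for every \(A \in O\), Case 2 of the proof of the preceding theorem applies: \(\langle A \rangle\) acts freely on the \(24\) faces. Every face orbit therefore has size \(|A|\), and within each orbit any tile design may be chosen freely and then transported. So \(|X^A| = \tid{24/|A|}\), with no factor of \(\tf{}\) appearing; this is why the statement involves only \(\tid{}\).

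Substituting class sizes and orders gives
\[
  \tid{24} + 6\tid{6} + 3\tid{12} + 8\tid{8} + 6\tid{12},
\]
which are the first five terms of the theorem's unsimplified sum. Collecting the two \(\tid{12}\) terms into \(9\tid{12}\) yields the stated simplified form.

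The only step requiring real justification is freeness: no nonidentity rotation fixes a face of the tetrakis hexahedron. Geometrically this is clear. The induced group on a face is just \(\langle f \rangle\), generated by reflections, so a rotation stabilizing a face would have to act trivially on that face's three noncollinear vertices. A rotation fixing three noncollinear points in \(\mathbb{R}^3\) is the identity. Hence the stabilizer of each face in \(O\) is trivial. This is the same argument that makes Case 2 of the theorem go through, and it is consistent with the table's listing only \(C_8^O\).
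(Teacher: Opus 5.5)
Your proposal is correct and follows the paper's route. The corollary is the restriction of the preceding theorem's Burnside computation to the five rotational classes $C_1^O,\dots,C_5^O$; since only the reflection class $C_8^O$ has non-free behaviour on the faces, every $A \in O$ acts freely and contributes $\tid{24/|A|}$. Your geometric freeness check is a sound supplement to the paper's computed table, though it implicitly relies on one fact: a face stabilizer fixes the face's outward normal, so a rotation can only induce a planar rotation of the face, never the reflection $f$.
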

Because the faces of the tetrakis hexahedron are the fundamental domain of full tetrahedral symmetry, we include the following theorem, which agrees with the previous.
\begin{theorem}
  The number of ways of tiling the faces of the tetrakis hexahedron up to full tetrahedral symmetry \(T_d\) with \(\tid{}\) tiles is given by the expression
  \begin{equation*}
    \frac{1}{24}
    \left(
      \tid{24} +
      3\tid{12} +
      8\tid8 +
      6\tid{12} +
      6\tid6
    \right) =
    \frac{1}{24} \left(
      \tid{24} +
      9\tid{12} +
      8\tid8 +
      6\tid6
    \right).
  \end{equation*}
\end{theorem}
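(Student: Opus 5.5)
We apply the orbit-counting theorem (\Cref{thm:orbitCountingTheorem}) to the action of the full tetrahedral group \(T_d\) on tilings of the \(24\) faces of the tetrahedrakis hexahedron. We embed \(T_d\) in \(O_h\) as the index-\(2\) subgroup preserving each of the two inscribed tetrahedra; in the coordinates of \Cref{sec:octahedral} these are the signed permutation matrices with an even number of \(-1\) entries. The key structural fact is that the \(24\) faces of the tetrakis hexahedron are exactly the fundamental domains of the \(T_d\) reflection arrangement. The diagonal mirror planes \(x=\pm y\), \(y=\pm z\), \(x=\pm z\) cut each cube face into four triangles, giving \(6 \cdot 4 = 24\) faces. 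Since \(T_d\) is generated by reflections in these six planes and has order \(24\), it acts \emph{simply transitively} on the faces.

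Consequently no nonidentity element of \(T_d\) stabilizes a face. In particular, no element induces a nontrivial symmetry on any face, so only \(\tid{}\) can appear, and every \(\langle A \rangle\) acts freely. This mirrors Case~2 in the earlier proofs: each \(A\) partitions the \(24\) faces into orbits of size \(|A|\) and fixes exactly \(t_{\id}^{24/|A|}\) tilings. Freeness also follows directly from the tables already computed. The only \(O_h\)-class with face-fixing elements is \(C_8^O\) (\Cref{tabl:TetrakisHexahedronConjugacy}), the coordinate reflections such as \(R_0\), which have determinant \(-1\) and a single \(-1\) entry and therefore lie outside \(T_d\).

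Next we identify the classes of \(T_d\) inside \(O_h\) and read off their orders from \Cref{tabl:TetrahedralGroupConjugacyClasses}:
\begin{itemize}
  \item the identity, with \(\tid{24}\) fixed tilings;
  \item the \(3\) edge rotations by \(180^\circ\), which are the coordinate-axis rotations \(C_3^O\), with \(\tid{12}\);
  \item the \(8\) rotations of order \(3\) (\(C_4^O\)), with \(\tid8\);
  \item the \(6\) reflections, which are the diagonal reflections \(C_{10}^O\), of order \(2\), with \(\tid{12}\);
  \item the \(6\) rotoreflections of order \(4\) (\(C_7^O\)), with \(\tid6\).
\end{itemize}
Weighting by class sizes and dividing by \(|T_d|=24\) gives the stated formula. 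Collecting terms, \(3\tid{12}+6\tid{12}=9\tid{12}\), which matches the rotational-octahedral corollary.

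The main obstacle is justifying simple transitivity, i.e.\ that the correspondence between \(T_d\)-classes and \(O_h\)-classes is correct and that the \(T_d\) diagonal reflections (class \(C_{10}^O\)) fix no face. A reflection in the plane \(x=y\) fixes a face only if that face is symmetric about the plane. However, the plane \(x=y\) contains edges of the tetrakis hexahedron rather than bisecting any face. The faces bisected by mirrors are bisected only by the coordinate planes, whose reflections lie in \(C_8^O\) and not in \(T_d\). We would verify this with one explicit face, such as \(F_1=(v_1,v'_1,v'_2)\), by checking its images under \(R_1\) and \(R_2\). Transitivity then extends the check to all faces.
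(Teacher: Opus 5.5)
Your proposal is correct and is essentially the paper's argument. The \(24\) faces are the chambers (fundamental domains) of \(T_d\), so every \(\langle A\rangle\) acts freely and fixes \(\tid{24/|A|}\) tilings, and weighting by the class sizes \(1,3,8,6,6\) gives the formula. Your placement of the \(T_d\)-classes inside \(O_h\) (as \(C_1^O, C_3^O, C_4^O, C_{10}^O, C_7^O\)) makes explicit what the paper's one-line proof leaves implicit, as does your cross-check that the only face-stabilizing class \(C_8^O\) of \Cref{tabl:TetrakisHexahedronConjugacy} is disjoint from \(T_d\).
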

\begin{proof}
  Because each of the \(24\) faces of the tetrakis hexahedron corresponds to the fundamental domain of the full tetrahedral group \(T_d\), the group action is free. Thus, for each \(A \in T_d\), the orbits of \(\langle A \rangle\) partition the faces into parts of size \(|A|\), and so there are \(\tid{24/|A|}\) tilings fixed by \(A\).
\end{proof}
\begin{oeis*}
  The number of \(n\)-colorings of the faces of the tetrakis hexahedron up to the \(24\) symmetries of the rotational octahedral group \(O\) (equivalently the \(24\) symmetries of the full tetrahedral group \(T_d\)) has been added to the OEIS as sequence \oeis{A378475} (\(0\)-indexed): \[
    0, 1, 700688, 11768099013, 11728130343936, 2483526957328125, \ldots,
  \]
  and the number of colorings up to the \(48\) symmetries of the full octahedral group \(O_h\) has been added to the OEIS as sequence \oeis{A378473} (\(0\)-indexed): \[
    0, 1, 355048, 5886817533, 5864336054656, 1241773051013125, \ldots.
  \]
\end{oeis*}

\subsection{Disdyakis dodecahedron}
The disdyakis dodecahedron, illustrated in \Cref{fig:DisdyakisDodecahedron}, is a Catalan solid with \(48\) scalene triangular faces and is the polyhedral dual of the truncated cuboctahedron. Its \(26\) vertices are generated by the orbits of \(v=(1,0,0)\) and \(v'=\alpha(1,1,1)\), and \(v''=\beta(1,1,0)\) under \(O\), where
\(\alpha = \left(3-\sqrt{2}\right)/3 \approx 0.528595\) and
\(\beta = \left(10-\sqrt{2}\right)/14 \approx 0.61327\), under the action of its isometry group, the full octahedral group \(O_h\) of order \(48\). The orbits have size \(6\), \(8\), and \(12\) respectively.

\par\medskip\noindent
\begin{minipage}{\textwidth}\captionsetup{type=figure}
  \centering
    \begin{subfigure}[b]{0.35\textwidth}
      \centering
      \includegraphics[scale=0.5]{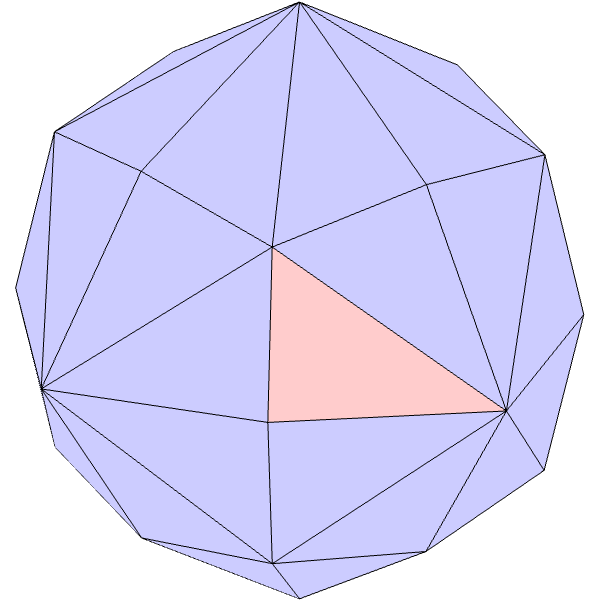}
      \caption{}
    \end{subfigure}
    \begin{subfigure}[b]{0.35\textwidth}
      \centering
      \includegraphics[scale=0.5]{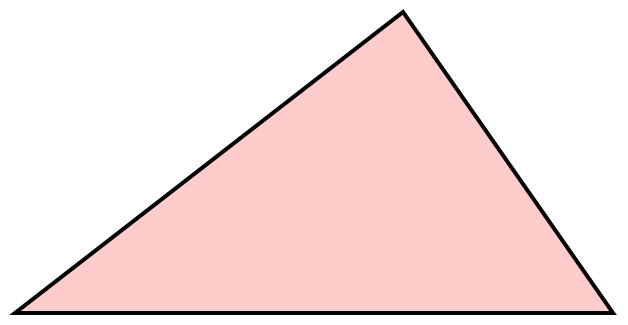}
      \caption{}
    \end{subfigure}
    \caption[Illustrations of the disdyakis dodecahedron and its faces.]{
      Illustrations of (a) the disdyakis dodecahedron with one face highlighted and (b) a face of the disdyakis dodecahedron.
    }
    \label{fig:DisdyakisDodecahedron}
\end{minipage}
\par\medskip

The induced symmetry group of the faces of the disdyakis dodecahedron is the trivial group.

\begin{theorem}
  The number of ways of tiling the faces of the disdyakis dodecahedron up to full octahedral symmetry \(O_h\) with \(\tid{}\) tiles
  is given by the expression
  \begin{align*}
    &\frac{1}{48}
    \left(
      \tid{48} +
      6\tid{12} +
      3\tid{24} +
      8\tid{16}+
      6\tid{24} +
      \tid{24} +
      6\tid{12} +
      3\tid{24} +
      8\tid8 +
      6\tid{24}
    \right) \\
    &\qquad=\frac{1}{48} \left(
      \tid{48} +
      19\tid{24} +
      8\tid{16} +
      12\tid{12} +
      8\tid8
    \right).
  \end{align*}
\end{theorem}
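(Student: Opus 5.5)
The plan is to apply the orbit-counting theorem (\Cref{thm:orbitCountingTheorem}) to the action of \(O_h\) on the set of tilings \(\mathcal F \to \mathcal T\). The key structural fact is that the \(48\) faces of the disdyakis dodecahedron are exactly the fundamental domains of the reflection arrangement of \(O_h\): each face is the spherical triangle cut out by the mirrors of \(O_h\), with one vertex in each of the orbits of \(v\), \(v'\), and \(v''\). Hence \(O_h\) acts simply transitively on the faces. Since there are \(48 = |O_h|\) faces and the action is transitive, the stabilizer of every face is trivial. This is consistent with the induced symmetry group being trivial, as stated before the theorem.

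To verify this, I would exhibit one face, for example \((v_1, v''_1, v'_1)\) with the points \((1,0,0)\), \(\beta(1,1,0)\), \(\alpha(1,1,1)\). I would then note that the only element of \(O_h\) fixing the set of these three vertices must fix each one individually, because they lie in distinct vertex orbits. An element fixing the three linearly independent vectors \((1,0,0)\), \((1,1,0)\), \((1,1,1)\) is the identity matrix, so the stabilizer is trivial.

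Freeness then gives the fixed-point counts immediately. For any \(A \in O_h\), no nontrivial power \(A^k\) fixes a face, so every orbit of \(\langle A\rangle\) on faces has size exactly \(|A|\). A tiling fixed by \(A\) is constant on each orbit, with no extra symmetry constraint on the tile, so \(|X^A| = \tid{48/|A|}\). No face-symmetry parameters \(t_g\) other than \(\tid{}\) appear, which is why the formula involves only \(\tid{}\), and there is no table of non-maximal faces for this solid.

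Finally I would sum over the conjugacy classes of \Cref{tabl:OctahedralGroupConjugacyClasses} in order, reading off each size and order:
\begin{itemize}
\item identity (size \(1\), order \(1\)): \(\tid{48}\);
\item \(C_2^O\) (size \(6\), order \(4\)): \(6\tid{12}\);
\item \(C_3^O\) (size \(3\), order \(2\)): \(3\tid{24}\);
\item \(C_4^O\) (size \(8\), order \(3\)): \(8\tid{16}\);
\item \(C_5^O\) (size \(6\), order \(2\)): \(6\tid{24}\);
\item \(C_6^O\) (size \(1\), order \(2\)): \(\tid{24}\);
\item \(C_7^O\) (size \(6\), order \(4\)): \(6\tid{12}\);
\item \(C_8^O\) (size \(3\), order \(2\)): \(3\tid{24}\);
\item \(C_9^O\) (size \(8\), order \(6\)): \(8\tid{8}\);
\item \(C_{10}^O\) (size \(6\), order \(2\)): \(6\tid{24}\).
\end{itemize}
Dividing by \(48\) gives the first expression. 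Collecting terms gives \(1+3+6+1+3+6 = 19\) copies of \(\tid{24}\), \(12\) copies of \(\tid{12}\), \(8\) of \(\tid{16}\), and \(8\) of \(\tid8\), which is the simplified form.

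The only real obstacle is justifying freeness rigorously, which comes down to the trivial-stabilizer check above. Everything after that is bookkeeping.
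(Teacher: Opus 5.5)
Your proposal is correct and takes the same route as the paper. The faces are the fundamental chambers of $O_h$, so the action on the $48$ faces is free, which gives $\tid{48/|A|}$ fixed tilings for each $A$; summing over the conjugacy classes then yields the formula. You justify freeness more explicitly than the paper does, via the trivial stabilizer of the chamber face $x\ge y\ge z\ge 0$. One cosmetic slip: the coefficient of $\tid{24}$ is $3+6+1+3+6=19$, from the classes $C_3^O, C_5^O, C_6^O, C_8^O, C_{10}^O$, but the sum you wrote, $1+3+6+1+3+6$, equals $20$.
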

\begin{proof}
  Because each of the \(48\) faces of the disdyakis dodecahedron corresponds to the fundamental domain of the full octahedral group \(O_h\), the group action is free. Thus, for each \(A \in O_h\), the orbits of \(\langle A \rangle\) partition the faces into parts of size \(|A|\), and so there are \(\tid{48/|A|}\) tilings fixed by \(A\).
\end{proof}
\begin{corollary}
  The number of ways of tiling the faces of the disdyakis dodecahedron up to rotational octahedral symmetry \(O\) with \(\tid{}\) tiles
  is given by the expression
  \begin{align*}
    &\frac{1}{24}
    \left(
      \tid{48} +
      6\tid{12} +
      3\tid{24} +
      8\tid{16} +
      6\tid{24}
    \right).
  \end{align*}
\end{corollary}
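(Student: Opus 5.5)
The count will come from \Cref{thm:orbitCountingTheorem} applied to the rotational octahedral group \(O\) acting on the set of tilings. By the preceding theorem, the action of \(O_h\) on the \(48\) faces of the disdyakis dodecahedron is free, since each face is a fundamental domain of \(O_h\). Freeness passes to every subgroup, so the action of \(O\) on the faces is free as well. Moreover, because the induced symmetry group on a face is trivial, every face tile design is fixed by every induced symmetry, and no constraint beyond \(t_{\id}\) enters.

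The key step is to compute \(|X^A|\) for each \(A \in O\). Since \(\langle A\rangle\) acts freely, each orbit of \(\langle A\rangle\) on the faces has size exactly \(|A|\). A tiling fixed by \(A\) must be constant along these orbits, up to the induced face symmetries, which here are trivial. Such a tiling is therefore determined by one free choice of tile per orbit, giving \(|X^A| = \tid{48/|A|}\).

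I then sum over the five conjugacy classes of \(O\) listed in \Cref{tabl:OctahedralGroupConjugacyClasses}:
\begin{itemize}
\item the identity (size \(1\), order \(1\)) contributes \(\tid{48}\);
\item \(C_2^O\) (size \(6\), order \(4\)) contributes \(6\tid{12}\);
\item \(C_3^O\) (size \(3\), order \(2\)) contributes \(3\tid{24}\);
\item \(C_4^O\) (size \(8\), order \(3\)) contributes \(8\tid{16}\);
\item \(C_5^O\) (size \(6\), order \(2\)) contributes \(6\tid{24}\).
\end{itemize}
Dividing the total by \(|O| = 24\) gives the stated expression.

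Nothing here is a genuine obstacle. The only point that needs care is the justification of freeness, but it is inherited directly from the \(O_h\) argument in the preceding theorem, so the corollary amounts to restricting that theorem's sum to the rotational conjugacy classes and changing the normalization from \(1/48\) to \(1/24\).
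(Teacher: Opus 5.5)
Your proposal is correct and takes the same approach as the paper. The action of \(O_h\), and hence of \(O\), on the \(48\) faces is free, so each \(A\) fixes \(\tid{48/|A|}\) tilings, and summing over the five rotational conjugacy classes \(C_1^O,\dots,C_5^O\) with normalization \(1/24\) is exactly the restriction of the \(O_h\) theorem's sum.
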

\begin{oeis*}
  The number of \(n\)-colorings of the faces of the disdyakis dodecahedron up to the \(24\) symmetries of the rotational octahedral group \(O\) has been added to the OEIS as sequence \oeis{A396986} (\(0\)-indexed): \[
    0, 1, 11728130343936, 3323601794975613468921, 3301173438094452954283114496, \ldots,
  \]
  and the number of colorings up to the \(48\) symmetries of the full octahedral group \(O_h\) has been added to the OEIS as sequence \oeis{A378474} (\(0\)-indexed): \[
    0, 1, 5864068667776, 1661800897546646288751, 1650586719047285117763813376, \ldots.
  \]
\end{oeis*}

\subsection{Pentagonal icositetrahedron}
The pentagonal icositetrahedron, illustrated in \Cref{fig:PentagonalIcositetrahedron}, is a Catalan solid with \(24\) (non-regular) pentagonal faces and is the polyhedral dual of the snub cube. Its \(38\) vertices are generated by the orbits of \(v=(\alpha,0,0)\) and \(v'=(1,1,1)\), and \(v''=(1,\beta,\gamma)\), where
\(\alpha \approx 1.83929\) is the tribonacci constant, which is the real root of \(x^3-x^2-x-1\),
\(\beta = \alpha^2 - 2 \approx 1.38298\) is the real root of \(x^3+3 x^2-x-7\), and
\(\gamma = \alpha^{-2} \approx 0.295598\) is the real root of \(x^3+x^2+3 x-1\), under the action of its isometry group, the rotational octahedral group \(O_h\) of order \(24\). The orbits have size \(6\), \(8\), and \(24\) respectively.

\par\medskip\noindent
\begin{minipage}{\textwidth}\captionsetup{type=figure}
  \centering
    \begin{subfigure}[b]{0.35\textwidth}
      \centering
      \includegraphics[scale=0.5]{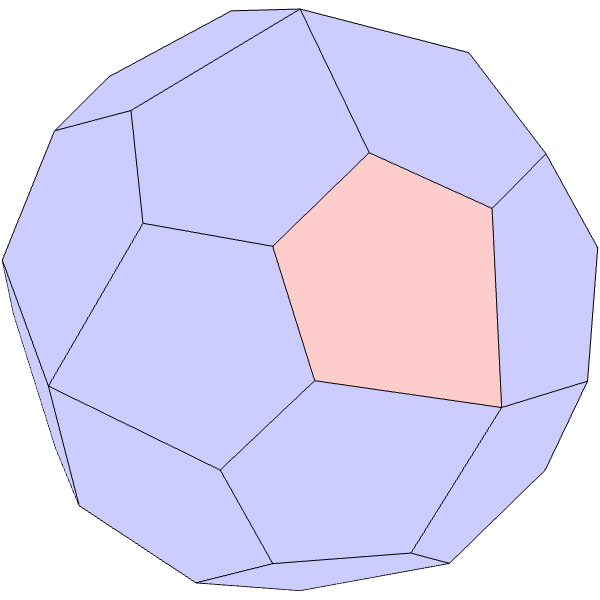}
      \caption{}
    \end{subfigure}
    \begin{subfigure}[b]{0.35\textwidth}
      \centering
      \includegraphics[scale=0.5]{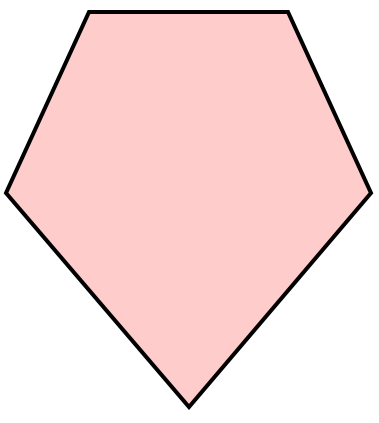}
      \caption{}
    \end{subfigure}
    \caption[Illustrations of the pentagonal icositetrahedron and its faces.]{
      Illustrations of (a) the pentagonal icositetrahedron with one face highlighted and (b) a face of the pentagonal icositetrahedron.
    }
    \label{fig:PentagonalIcositetrahedron}
\end{minipage}
\par\medskip

The induced symmetry group of the faces of the pentagonal icositetrahedron is the trivial group.

\begin{theorem}
  The number of ways of tiling the faces of the pentagonal icositetrahedron up to rotational octahedral symmetry with \(\tid{}\) tiles
  is given by the expression
  \begin{align*}
    &\frac{1}{24}
    \left(
      \tid{24} +
      6\tid{6} +
      3\tid{12} +
      8\tid{8}+
      6\tid{12}
    \right)
    =\frac{1}{24} \left(
      \tid{24} +
      9\tid{12} +
      8\tid{8} +
      6\tid{6}
    \right).
  \end{align*}
\end{theorem}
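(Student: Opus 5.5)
We apply the orbit-counting theorem (\Cref{thm:orbitCountingTheorem}) to the rotational octahedral group \(O\) of order \(24\), acting on the \(24\) pentagonal faces. As with the disdyakis dodecahedron, the key structural claim is that this action is free. The pentagonal icositetrahedron is the dual of the snub cube, which is vertex-transitive under \(O\) with exactly \(24\) vertices. Hence the face action of \(O\) is transitive on a set of size \(|O| = 24\). Every face stabilizer therefore has order \(24/24 = 1\), so the action is regular, and in particular free.

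To make this concrete, we check that the generating point \(v''=(1,\beta,\gamma)\) has a trivial stabilizer in \(O\). Its coordinates have distinct absolute values and are all nonzero, since \(1 \ne \beta \approx 1.383\) and \(\gamma \approx 0.296\). A signed permutation matrix fixes such a point only if it is the identity. Each face contains exactly one \(v''\)-vertex, which we would read off from the explicit face list computed from the embedding, as in the earlier tables. So any rotation fixing a face must fix that vertex, and is therefore trivial. The trivial induced symmetry group stated above is consistent with this.

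Freeness means that for every \(A \in O\), each orbit of \(\langle A\rangle\) on the faces has size exactly \(|A|\). The faces thus split into \(24/|A|\) orbits, and a tiling is fixed by \(A\) exactly when it is constant on each orbit. No induced-symmetry constraint arises, because no non-identity power of \(A\) fixes a face. This gives \(|X^A| = \tid{24/|A|}\).

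Summing over the conjugacy classes \(C_1^O,\dots,C_5^O\) from \Cref{tabl:OctahedralGroupConjugacyClasses}, with sizes \(1,6,3,8,6\) and orders \(1,4,2,3,2\), gives the terms \(\tid{24}\), \(6\tid{6}\), \(3\tid{12}\), \(8\tid{8}\) and \(6\tid{12}\). Dividing by \(24\) and collecting \(3+6=9\) copies of \(\tid{12}\) yields the stated expression.

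The main obstacle is justifying freeness rigorously, and specifically confirming that each face contains exactly one vertex from the \(v''\)-orbit. I would settle this with the transitivity count: \(24\) faces form a single \(O\)-orbit under a group of order \(24\), so the action is regular. That argument avoids any case-by-case inspection of the faces.
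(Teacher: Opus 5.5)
Your argument is correct and is essentially the paper's. Freeness of the $O$-action on the $24$ faces gives $\tid{24/|A|}$ fixed tilings for each $A \in O$, and summing over $C_1^O,\dots,C_5^O$ gives the formula; your orbit--stabilizer count (a transitive action of a group of order $24$ on $24$ faces is regular) makes precise the paper's appeal to fundamental domains.

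One side claim is false, though your proof does not depend on it. Each face contains three vertices from the $v''$-orbit, not one: counting incidences gives $6\cdot 4 = 24$ for the $v$-orbit and $8\cdot 3 = 24$ for the $v'$-orbit (one of each per face), but $24\cdot 3 = 72$ for the $v''$-orbit. A correct concrete check is that a rotation fixing a face must fix that face's unique $v$-vertex and unique $v'$-vertex. These two vertices are linearly independent, so the rotation is the identity.
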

\begin{proof}
  Because each of the \(24\) faces of the pentagonal icositetrahedron corresponds to the fundamental domain of the rotational octahedral group \(O\), the group action is free. Thus, for each \(A \in O\), the orbits of \(\langle A \rangle\) partition the faces into parts of size \(|A|\), and so there are \(\tid{24/|A|}\) tilings fixed by \(A\).
\end{proof}
\begin{oeis*}
  The number of \(n\)-colorings of the faces of the pentagonal icositetrahedron up to the \(24\) rotational symmetries of the rotational octahedral group \(O\) has been added to the OEIS as sequence \oeis{A378475} (\(0\)-indexed): \[
    0, 1, 700688, 11768099013, 11728130343936, 2483526957328125, \ldots.
  \]
\end{oeis*}

\section{Polyhedra with icosahedral symmetry}
\label{sec:icosahedral}
The face-transitive polyhedra with icosahedral symmetry are the dodecahedron, icosahedron, rhombic triacontahedron, triakis icosahedron, deltoidal hexecontahedron, pentakis dodecahedron, disdyakis triacontahedron, and pentagonal hexecontahedron (rotational symmetry only).

The icosahedral group is the Coxeter group with generators \(R_0\), \(R_1\), and \(R_2\) and group presentation \[
  I_h = \langle
    R_0, R_1, R_2 \mid R_0^2 = R_1^2 = R_2^2 = (R_0R_1)^5 = (R_1R_2)^3 = (R_0R_2)^2 = 1
  \rangle,
\]
where \(\displaystyle \phi = \frac{1 + \sqrt{5}}{2}\) and
\[
  R_0 = \begin{bmatrix}-1 & 0 & 0 \\ 0 & 1 & 0 \\ 0 & 0 & 1 \end{bmatrix}\!\!,
  \ R_1 = \begin{bmatrix}(1-\phi)/2 & -\phi/2 & -1/2 \\ -\phi/2 & 1/2 & (1-\phi)/2 \\ -1/2 & (1-\phi)/2 & \phi/2 \end{bmatrix}\!\!,
  \text{ and}
  \ R_2 = \begin{bmatrix}1 & 0 & 0 \\ 0 & -1 & 0 \\ 0 & 0 & 1 \end{bmatrix}\!\!.
\]

The rotational icosahedral group \(I\) is the order \(60\) subgroup of elements with positive determinant, \[
  I = \{A \in I \mid \det(A) = 1\}.
\]

The conjugacy classes of both \(I\) and \(I_h\) are described in Table \ref{tabl:IcosahedralGroupConjugacyClasses}.

\begin{table}[ht]
\[
\begin{tabular}{lllll}
  name & description & representative & size & order \\ \hline
  \(C_1^I\) & Identity & \(\mathrm{I}\) & 1 & 1 \\
  \(C_2^I\) & Rotation by \(72^\circ\) & \(R_0R_1\) & \(12\) & \(5\) \\
  \(C_{2'}^I\) & Rotation by \(144^\circ\) & \((R_0R_1)^2\) & \(12\) & \(5\) \\
  \(C_3^I\) & Rotation by \(120^\circ\) & \(R_1R_2\) & \(20\) & \(3\) \\
  \(C_4^I\) & Rotation by \(180^\circ\) & \(R_0R_2\) & \(15\) & \(2\) \\
  \hline
  \(C_5^I\) & Central inversion & \((R_0 R_1)^2R_0 R_2 (R_1 R_0)^2 R_2 R_1 R_0 R_1 R_2\) & 1 & 1 \\
  \(C_6^I\) & Rotation by \(36^\circ\) and reflection & \(R_0R_1R_2\) & \(12\) & \(10\) \\
  \(C_{6'}^I\) & Rotation by \(108^\circ\) and reflection & \(R_0R_1R_0R_1R_2R_1R_0R_1R_2\) & \(12\) & \(10\) \\
  \(C_7^I\) & Rotation by \(60^\circ\) and reflection & \((R_0R_1)^2R_2\) & \(20\) & \(6\) \\
  \(C_8^I\) & Reflection & \(R_0\) & \(15\) & \(2\) \\
\end{tabular}
\]
\caption{The conjugacy classes of the full icosahedral group \(I_h\). The first five are also the conjugacy classes of the rotational icosahedral group \(I\).}
\label{tabl:IcosahedralGroupConjugacyClasses}
\end{table}

\subsection{Dodecahedron}
The dodecahedron, illustrated in \Cref{fig:dodecahedron}, is a Platonic solid with \(12\) regular pentagonal faces. Its \(20\) vertices are generated by the orbit of \(v = (1,1,1)\) under the action of its isometry group, the full icosahedral group \(I_h\) of order \(120\).

\par\medskip\noindent
\begin{minipage}{\textwidth}\captionsetup{type=figure}
  \centering
  \begin{subfigure}[b]{0.35\textwidth}
    \centering
    \includegraphics[scale=0.5]{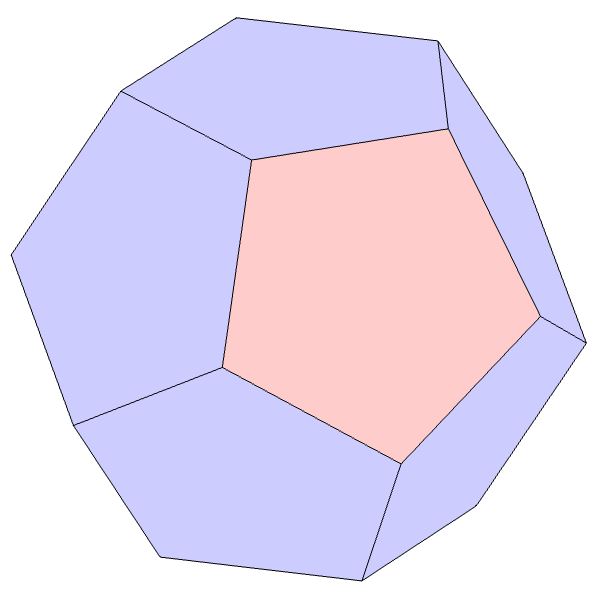}
    \caption{}
  \end{subfigure}
  \begin{subfigure}[b]{0.35\textwidth}
    \centering
    \includegraphics[scale=0.5]{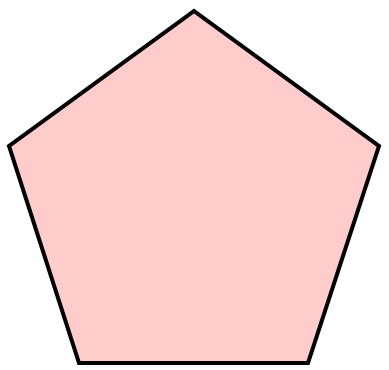}
    \caption{}
  \end{subfigure}
  \caption[Illustrations of the dodecahedron and its faces.]{
    Illustrations of (a) the dodecahedron with one face highlighted and (b) a face of the dodecahedron.
  }
  \label{fig:dodecahedron}
\end{minipage}
\par\medskip

The induced symmetry group on the faces of the dodecahedron is the dihedral group of the pentagon \[
  D_5 = \langle r, f \mid r^5 = f^2 = (rf)^2 = e \rangle,
\] where \(r\) corresponds to \(72^\circ\) rotations, and \(f\) corresponds to reflections.

\begin{theorem}
  \label{thm:dodecahedron}
  The number of ways of tiling the faces of the dodecahedron up to full icosahedral symmetry \(I_h\) with \(\tid{}\) tiles of which
  \(\tr{}\) are fixed under \(72^\circ\) rotation and
  \(\tf{}\) are fixed under flipping horizontally
  is given by the expression
  \begin{align*}
    &\frac{1}{120}
    \left(
      \tid{12} +
      24\tid2\tr2 +
      20\tid4 +
      15\tid6 +
      \tid6 +
      24\tr{}\tid{} +
      20\tid2 +
      15\tid4\tf4
    \right).
  \end{align*}
\end{theorem}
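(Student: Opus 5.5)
We apply the orbit-counting theorem (\Cref{thm:orbitCountingTheorem}) to the action of \(I_h\) on tilings of the \(12\) pentagonal faces, going through the ten conjugacy classes of \Cref{tabl:IcosahedralGroupConjugacyClasses} as in the earlier proofs. The general principle is the one used throughout: for \(A \in I_h\), split the faces into orbits of \(\langle A \rangle\). An orbit of size \(|A|\) contributes a factor \(\tid{}\). An orbit of size \(n < |A|\) contributes a factor \(t_g\), where \(g \in D_5\) is the symmetry induced by \(A^n\) on any face of the orbit. So for each class we only need the cycle structure of \(A\) on the faces and, for the short cycles, the induced symmetry. We will compute these from the representative matrices and the vertex orbit of \(v=(1,1,1)\), and record them in a table like the earlier ones.

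Next we determine the cycle structures geometrically, using the fact that the dodecahedron's face centers are the vertices of the dual icosahedron.
\begin{description}
\item[Identity.] It contributes \(\tid{12}\).
\item[\(C_2^I, C_{2'}^I\) (order-\(5\) rotations).] The axis passes through two opposite face centers. The cycle type is \((5^2,1^2)\), and the two fixed faces are rotated by \(r\) or \(r^2\). Both generate \(\langle r\rangle\), so each of these \(24\) elements gives \(\tid2\tr2\).
\item[\(C_3^I\) (\(120^\circ\) rotations).] The axis passes through two opposite vertices, and the action is free with cycle type \((3^4)\), giving \(\tid4\) for each of the \(20\) elements.
\item[\(C_4^I\) (\(180^\circ\) rotations).] The axis passes through midpoints of opposite edges, so no face is fixed. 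The cycle type is \((2^6)\), giving \(\tid6\) for each of the \(15\) elements.
\item[\(C_5^I\) (central inversion).] It swaps opposite faces freely, giving \(\tid6\).
\item[\(C_6^I, C_{6'}^I\) (order-\(10\) rotoreflections).] The axis passes through a pair of opposite face centers. The two polar faces form an orbit of size \(2\), and \(A^2\) is an order-\(5\) rotation fixing them. The other \(10\) faces form a single orbit of size \(10\). The induced symmetry \(A^2\) generates \(\langle r\rangle\), so each of the \(24\) elements gives \(\tid{}\tr{}\).
\item[\(C_7^I\) (order-\(6\) rotoreflections).] The axis passes through vertices. Its square is a \(120^\circ\) rotation, which acts freely, so \(A\) acts freely with cycle type \((6^2)\), giving \(\tid2\) for each of the \(20\) elements.
\item[\(C_8^I\) (reflections).] Each mirror plane contains two opposite edges and bisects four faces, each through a vertex and the midpoint of the opposite edge. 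So the cycle type is \((2^4,1^4)\), with the four fixed faces reflected by \(f\). Each of the \(15\) elements gives \(\tid4\tf4\).
\end{description}
We then weight these contributions by the class sizes and divide by \(120\), which yields the stated expression.

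The main obstacle is rigorously confirming the induced symmetries, rather than relying on the geometric picture.
\begin{itemize}
\item For \(C_6^I\) and \(C_{6'}^I\) we must confirm that \(A^2\) really induces a generator of \(\langle r\rangle\) on the polar faces. Every nontrivial rotation of a pentagon generates \(\langle r\rangle\), so this reduces to \(A^2\) being a nontrivial rotation, which holds since \(A^2\) has order \(5\).
\item For \(C_8^I\) we must confirm that the induced map is a reflection. Because the tile counts for the reflections of \(D_5\) are all conjugate, it suffices to check that the induced permutation of the face's vertices has orientation-reversing type \((2^2,1)\).
\end{itemize}
Both checks can be made mechanically: apply the representative matrix, for example \(R_0\), to the \(20\) vertices and read off the permutation of the faces, exactly as the paper's tables do. As a sanity check, the final formula should give an integer at \(\tid{}=\tr{}=\tf{}=n\) and reproduce \oeis{A252705}.
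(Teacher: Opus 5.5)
Your proposal is correct and follows the paper's proof: orbit counting over the conjugacy classes of \(I_h\), where the fixed faces under \(C_2^I\) and \(C_{2'}^I\), the polar \(2\)-orbit under \(C_6^I\) and \(C_{6'}^I\), and the four bisected faces under \(C_8^I\) contribute \(\tr{}\) or \(\tf{}\), and every other orbit contributes \(\tid{}\). You derive the cycle types geometrically where the paper reads them off its computed table, and you treat the primed classes explicitly, which the paper only accounts for implicitly through its coefficients \(24\).

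One small step needs tightening. For \(C_7^I\), freeness of \(A^2\) alone only shows that the orbits have size \(3\) or \(6\). You also need that \(A^3\), the central inversion \(-\mathrm{I}\), fixes no face in order to rule out \(3\)-cycles.
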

\begin{proof}
  We proceed by cases, following \Cref{tabl:DodecahedronConjugacy}.
  \begin{description}
    \item[Case 1.] If \(A \in C_2^I\), which is the conjugacy class consisting of rotations by \(72^\circ\), then there are \(2\) faces that are fixed (rotated) under the action of \(A\), and the remaining \(10\) faces are partitioned into \(2\) parts of size \(|A| = 5\). Thus the number of face tilings fixed under \(A \in C_4^I\) is \(t_{\id}^{2}\tr{2}\).
    \item[Case 2.] If \(A \in C_6^I\), which is the conjugacy class consisting of rotations by \(36^\circ\) followed by a reflection, then there are \(2\) faces that are fixed (rotated) under the action of \(A^2\), and thus appear of an orbit of size \(2\) under \(\langle A \rangle\). The orbits of the remaining \(10\) faces consist of a single orbit of size \(|A| = 10\). Thus the number of face tilings fixed under \(A \in C_7^I\) is \(t_{\id}\tr{}\).
    \item[Case 3.] If \(A \in C_8^I\), which is the conjugacy class consisting of reflections, then there are \(4\) faces that are fixed (reflected) under the action of \(A\), and the remaining \(8\) faces are partitioned into \(4\) parts of size \(|A| = 2\). Thus the number of face tilings fixed under \(A \in C_8^I\) is \(t_{\id}^{4}\tf{4}\).
    \item[Case 4.] For the remaining elements, \(A \in I \setminus (C_2^I \cup C_6^I \cup C_8^I)\), which are elements of conjugacy classes that do not appear in the table, the action of the cyclic subgroup \(\langle A \rangle\) on the faces is free. Thus each partitions the \(30\) faces into parts of size \(|A|\), and thus the number of face tilings fixed under \(A\) is \(t_{\id}^{30/|A|}\).
  \end{description}
\end{proof}
\begin{table}[ht]
  \[
  \begin{tabular}{P{1.7cm}P{1.6cm}lP{3.4cm}P{1.6cm}}
    conjugacy class  & cycle structure & non-maximal faces & vertex permutation generator & induced subgroup
    \\ \hline & & & & \\[-6pt]
    \multirow{2}{*}{\(\mathcal C^I_2\)} &
    \multirow{2}{*}{\((5^2,1^2)\)} &
      \(F_6 = (v_4,v_{10},v_{16},v_{14},v_8)\) &
      \((v_4\ v_{10}\ v_{16}\ v_{14}\ v_8)\) &
      \(\langle r \rangle\)
      \\
      & &
      \(F_7 = (v_5,v_{11},v_{17},v_{13},v_7)\) &
      \((v_5\ v_7\ v_{13}\ v_{17}\ v_{11})\) &
      \(\langle r \rangle\)
    \\[3pt] \hline & & & & \\[-9pt]
    \multirow{2}{*}{\(\mathcal C^I_6\)} &
    \multirow{2}{*}{\((10, 2)\)}
      & \(F_2 = (v_1,v_3,v_9,v_{10},v_4)\) &
      \((v_1\ v_3\ v_9\ v_{10}\ v_4)\) &
      \(\langle r \rangle\)
      \\
      & &
      \(F_{10} = (v_{11},v_{12},v_{18},v_{20},v_{17})\) &
      \((v_{11}\ v_{17}\ v_{20}\ v_{18}\ v_{12})\) &
      \(\langle r \rangle\)
    \\[3pt] \hline & & & & \\[-9pt]
    \multirow{4}{*}{\(\mathcal C^I_8\)} &
    \multirow{4}{*}{\((2^4,1^4)\)} &
      \(F_5 = (v_3, v_7, v_{13}, v_{15}, v_9)\) &
      \((v_3\ v_{15})(v_7\ v_{13})(v_9)\) &
      \(\langle f \rangle\)
      \\
      & &
      \(F_6 = (v_4, v_{10}, v_{16}, v_{14}, v_8)\) &
      \((v_4\ v_{16})(v_8\ v_{14})(v_{10})\) &
      \(\langle f \rangle\)
      \\
      & &
      \(F_7 = (v_5, v_{11}, v_{17}, v_{13}, v_7)\) &
      \((v_5\ v_{17})(v_7\ v_{13})(v_{11})\) &
      \(\langle f \rangle\)
      \\
      & &
      \(F_8 = (v_6, v_8, v_{14}, v_{18}, v_{12})\) &
      \((v_6\ v_{18})(v_8\ v_{14})(v_{12})\) &
      \(\langle f \rangle\)
  \end{tabular}
  \]
  \caption{The cycle structure of the permutations of the vertices of a dodecahedron under each conjugacy class of \(I_h\).}
  \label{tabl:DodecahedronConjugacy}
\end{table}

\begin{corollary}
  \label{thm:dodecahedronRotation}
  The number of ways of tiling the faces of the dodecahedron up to rotational icosahedral symmetry \(I\) with \(\tid{}\) tiles of which
  \(\tr{}\) are fixed under \(72^\circ\) rotation
  is given by the expression
  \begin{align*}
    &\frac{1}{60}
    \left(
      \tid{12} +
      24\tid2\tr2 +
      20\tid4 +
      15\tid6
    \right).
  \end{align*}
\end{corollary}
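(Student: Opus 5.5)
The corollary is the restriction of the orbit-counting computation in \Cref{thm:dodecahedron} to the rotational subgroup \(I \le I_h\) of order \(60\). I will apply \Cref{thm:orbitCountingTheorem} with \(\mathcal A = I\), where \(X\) is the set of tilings of the \(12\) faces. The group \(I\) is closed under conjugation within \(I_h\), and its conjugacy classes are \(C_1^I, C_2^I, C_{2'}^I, C_3^I, C_4^I\) of sizes \(1, 12, 12, 20, 15\) (\Cref{tabl:IcosahedralGroupConjugacyClasses}). So it suffices to reuse the fixed-point counts already computed for these classes in the proof of \Cref{thm:dodecahedron} and divide by \(60\) instead of \(120\).

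Class by class:
\begin{itemize}
\item The identity fixes all \(\tid{12}\) tilings.
\item For \(A \in C_2^I\), a \(72^\circ\) rotation about an axis through two opposite face centers, \Cref{tabl:DodecahedronConjugacy} gives cycle structure \((5^2,1^2)\). The two axis faces (\(F_6, F_7\)) are each rotated by \(r\), and the other ten faces form two free \(5\)-cycles. Hence \(|X^A| = \tid2\tr2\).
\item For \(A \in C_{2'}^I\), the \(144^\circ\) rotation \(A = (R_0R_1)^2\), the same two faces are fixed but now rotated by \(r^2\), while the remaining faces again form two \(5\)-cycles. Since \(r^2\) generates \(\langle r\rangle \cong \mathbb Z_5\), a tile fixed by \(r^2\) is fixed by \(r\), so \(t_{r^2} = \tr{}\). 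This gives \(\tid2\tr2\) again, and together the two classes contribute \(24\tid2\tr2\).
\item Rotations in \(C_3^I\) (about vertex axes) and \(C_4^I\) (about edge midpoints) fix no face and have no power that fixes a face without being the identity. They therefore act freely via \(\langle A\rangle\), giving \(\tid{12/3} = \tid4\) and \(\tid{12/2} = \tid6\) respectively.
\end{itemize}
Weighting these counts by class sizes gives \(\tid{12} + 24\tid2\tr2 + 20\tid4 + 15\tid6\), and dividing by \(|I| = 60\) yields the stated expression.

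The only point needing care is the \(C_{2'}^I\) class, which \Cref{tabl:DodecahedronConjugacy} does not list separately. I would justify it directly: \(A\) is the square of an element of \(C_2^I\), so it has the same fixed faces and its induced symmetry on them is the square of the one induced by the \(72^\circ\) rotation. Then I would use the fact that both rotations generate the same cyclic subgroup of \(D_5\). Everything else is bookkeeping inherited from the proof of \Cref{thm:dodecahedron}.
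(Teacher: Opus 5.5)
Your proposal is correct and follows essentially the same route as the paper, which obtains the corollary by restricting the orbit-counting sum of \Cref{thm:dodecahedron} to the five rotational classes and dividing by $|I| = 60$; your explicit treatment of $C_{2'}^I$ via $\langle r^2\rangle = \langle r\rangle$ fills in a step the paper leaves tacit. (A minor nitpick: the induced rotations on the two axis faces are $r$ and $r^{-1}$ respectively, as the vertex permutations for $F_6$ and $F_7$ in \Cref{tabl:DodecahedronConjugacy} show, but the same cyclic-subgroup observation gives $t_{r^{-1}} = t_r$, so the count is unchanged.)
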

\begin{example}
  We will count the number of configurations of Matt Zucker's 3D printed dodecahedral ``Squiggle Orbs'' \cite{Zucker2025}, illustrated in Figure \ref{fig:squig}. There are two essentially different pentagonal tiles that one can choose as a face, one with rotational and reflectional symmetry, and the other with one reflectional symmetry but no rotational symmetry.
  Because the second tile has five distinct rotations, we have
  \(t_\mathrm{id} = 6\),
  \(t_r = 1\), and
  \(t_f = 2\). Therefore, there are \[
    \frac{6^{12} + 24\cdot6^2\cdot1^2+20\cdot6^4+15\cdot6^6 + 6^6 + 24\cdot1\cdot6 + 20\cdot6^2 + 15\cdot6^4\cdot2^4}{120} = 18\,148\,896
  \] ways of tiling the dodecahedron with these tiles up to rotation and reflection.
\end{example}
\begin{figure}
  \centering
  \begin{subfigure}[b]{0.2\textwidth}
    \begin{minipage}{\linewidth}
      \centering
      \includegraphics[width=\textwidth]{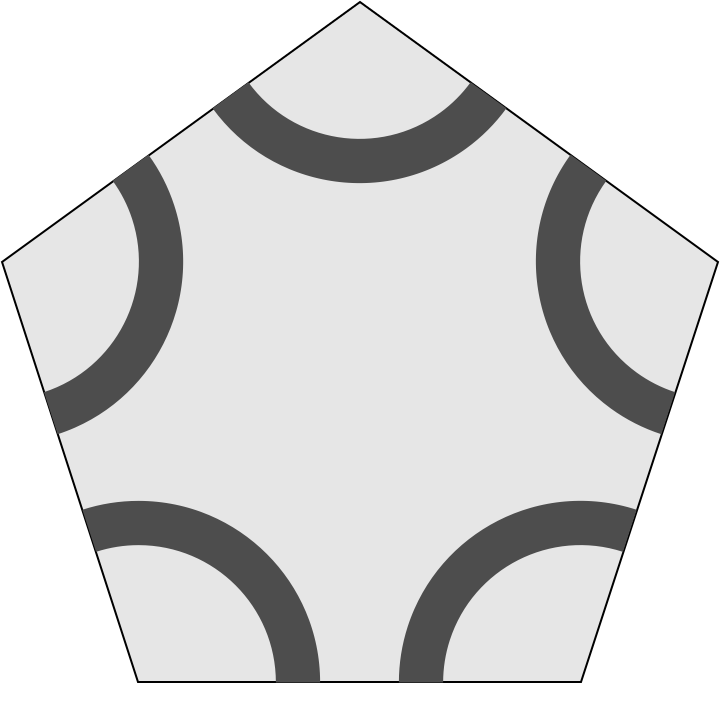}
      \includegraphics[width=\textwidth]{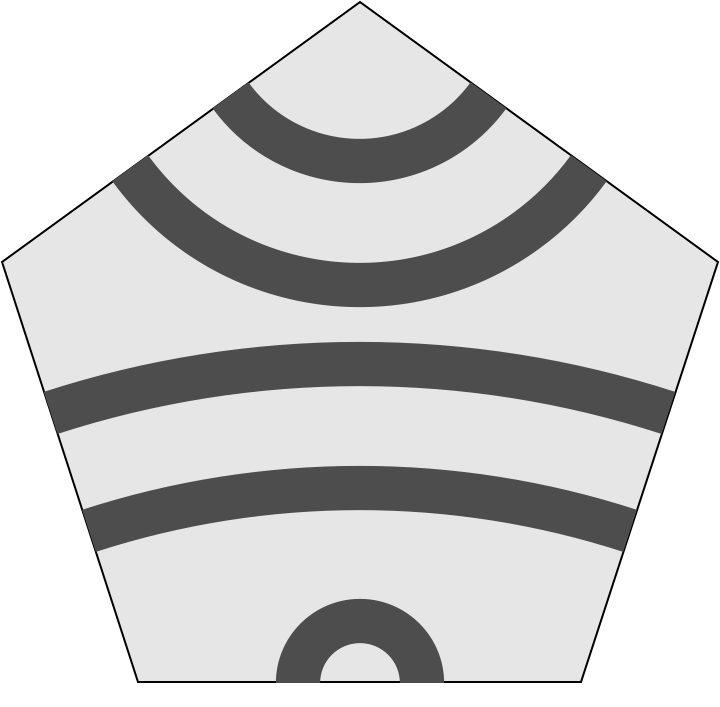}
    \end{minipage}
    \caption{}
  \end{subfigure}
  \hfill
  \begin{subfigure}[b]{0.41\textwidth}
    \centering
    \includegraphics[width=\textwidth]{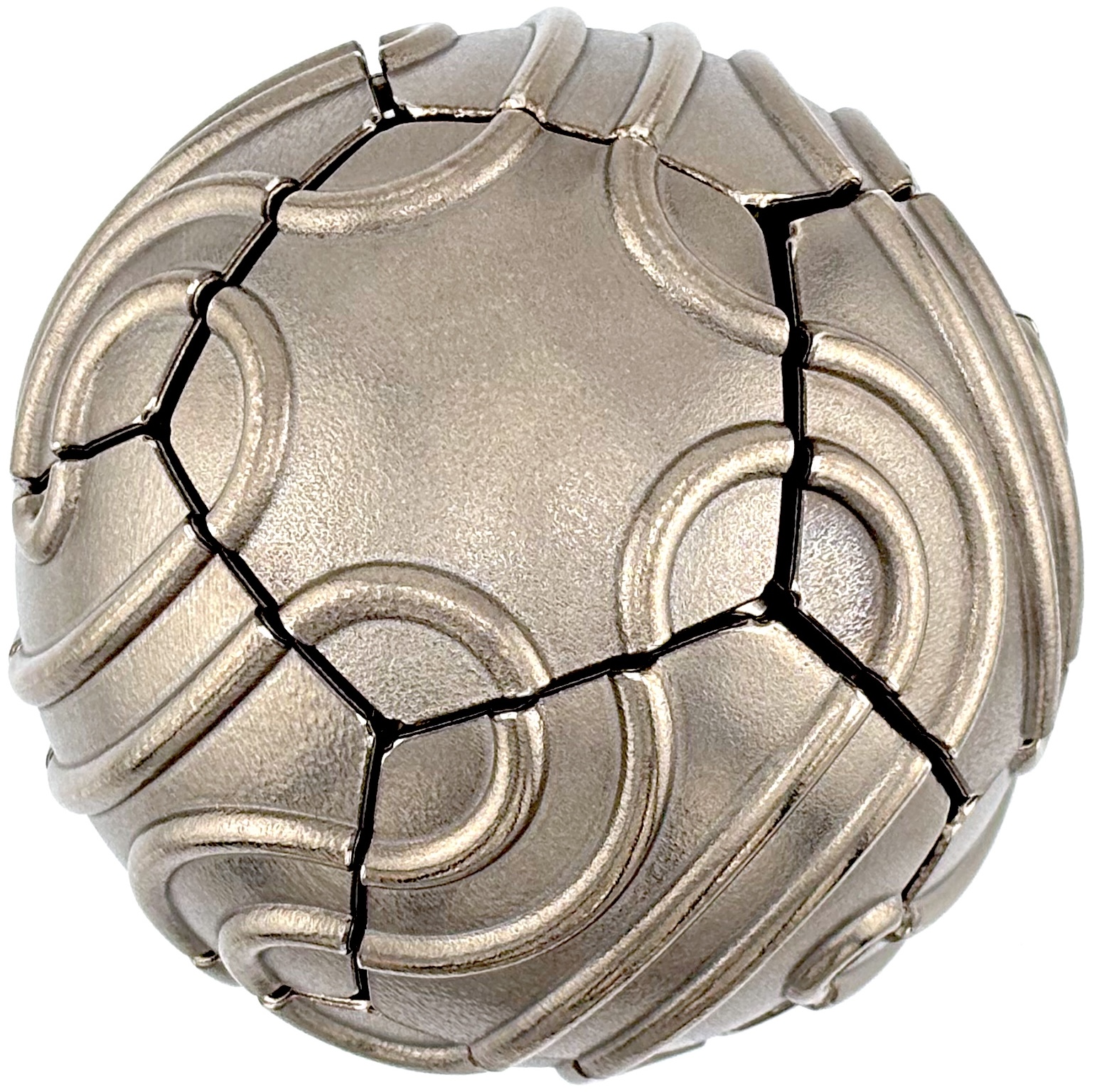}
    \caption{}
  \end{subfigure}
  \hfill
  \begin{subfigure}[b]{0.35\textwidth}
    \centering
        \includegraphics[width=\textwidth]{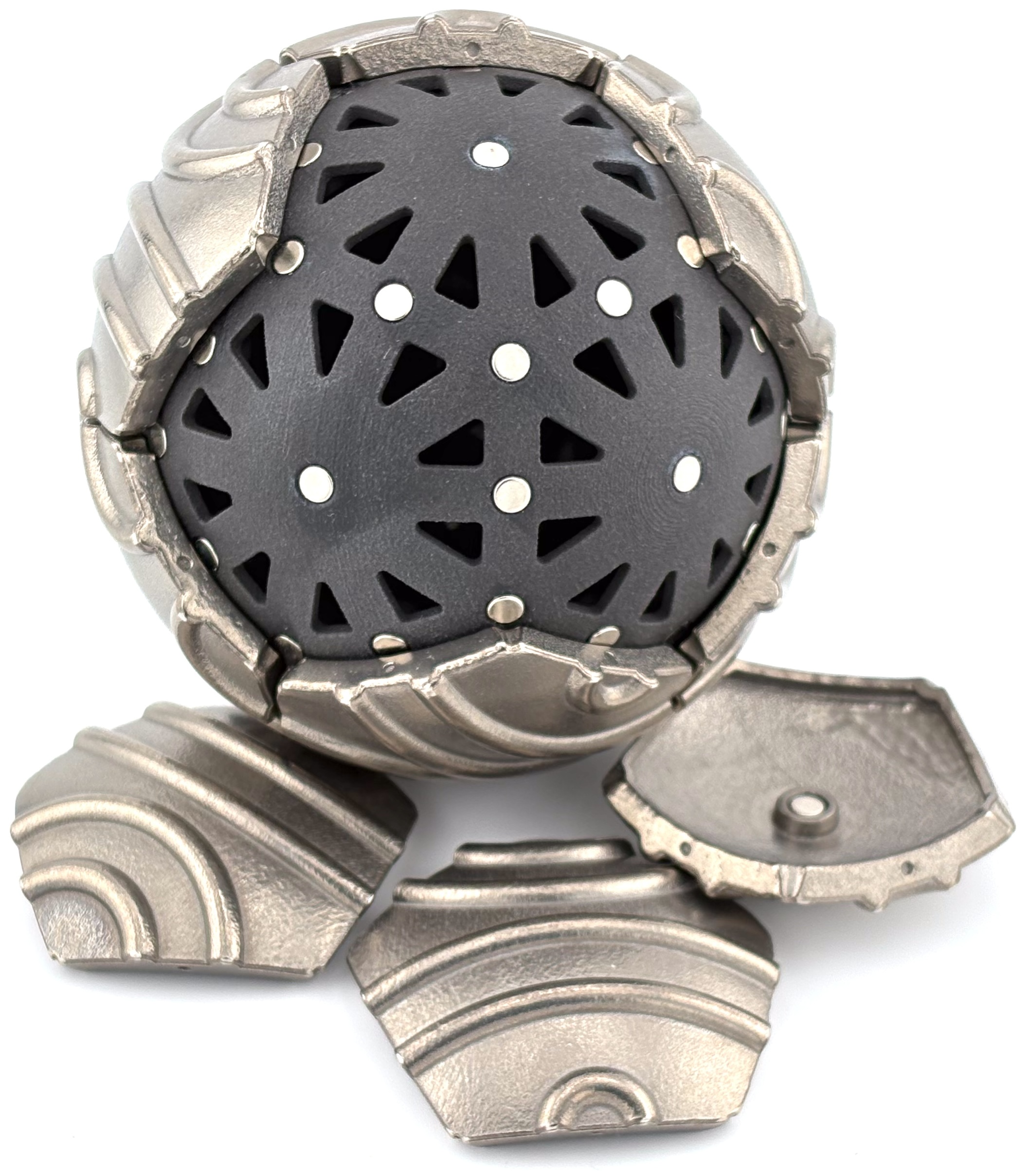}
    \caption{}
  \end{subfigure}
  \caption{In (a) the two essentially different tiles in Matt Zucker's configurable dodecahedral ``squiggle orb,'' and in (b) a photograph showing a dodecahedral squiggle orb, and in (c) a photograph of showing the squiggle orb with some faces removed. (Photos courtesy of Matt Zucker.)}
  \label{fig:squig}
\end{figure}
\begin{oeis*}
  The number of \(n\)-colorings of the faces of the dodecahedron up to the \(60\) symmetries of the rotational icosahedral group \(I\) appears in the OEIS as sequence \oeis{A000545}: \[
    1, 96, 9099, 280832, 4073375, 36292320, 230719293, 1145393152, 4707296613,
    \ldots,
  \]
  and the number of \(n\)-colorings up to the \(120\) symmetries of the icosahedral group \(I_h\) appears in the OEIS as sequence \oeis{A252705}: \[
    1, 82, 5379, 148648, 2085655, 18356514, 116081245, 574795936, 2359033605,
    \ldots.
  \]
\end{oeis*}

\subsection{Icosahedron}
The icosahedron, illustrated in \Cref{fig:icosahedron}, is a Platonic solid with \(20\) equilateral triangular faces. Its \(12\) vertices are generated by the orbits of \((0,1,\phi)\) under the action of its isometry group, the full icosahedral group \(I_h\) of order \(120\).

\par\medskip\noindent
\begin{minipage}{\textwidth}\captionsetup{type=figure}
  \centering
  \begin{subfigure}[b]{0.35\textwidth}
    \centering
    \includegraphics[scale=0.5]{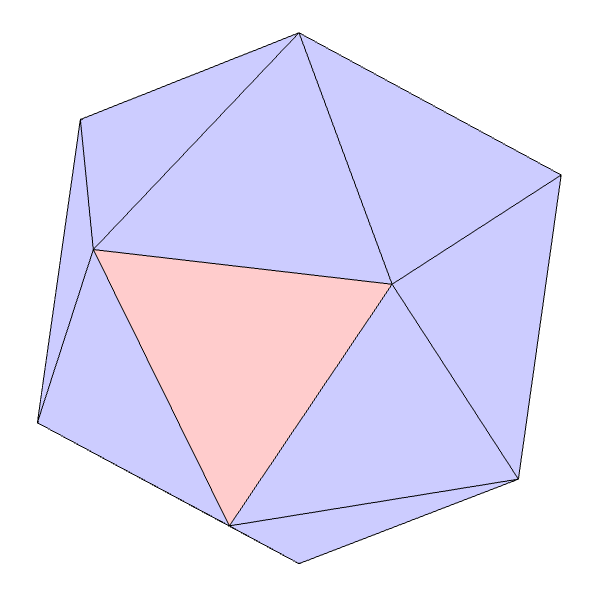}
    \caption{}
  \end{subfigure}
  \begin{subfigure}[b]{0.35\textwidth}
    \centering
    \includegraphics[scale=0.5]{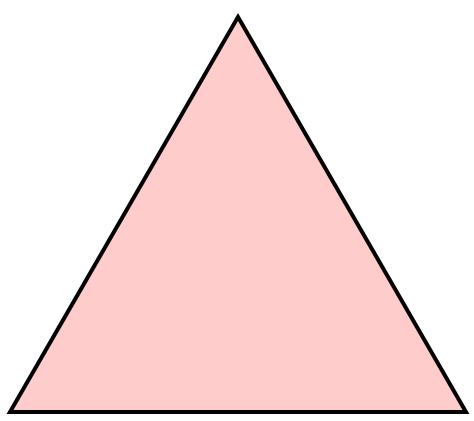}
    \caption{}
  \end{subfigure}
  \caption[Illustrations of the icosahedron and its faces.]{
    Illustrations of (a) the icosahedron with one face highlighted and (b) a face of the icosahedron.
  }
  \label{fig:icosahedron}
\end{minipage}
\par\medskip

The induced symmetry group on the faces of the icosahedron is the dihedral group of the triangle \[
  D_3 = \langle r, f \mid r^3 = f^2 = (rf)^2 = e \rangle,
\] where \(r\) acts by a \(120^\circ\) rotation, and \(f\) acts by reflection.

\begin{theorem}
  \label{thm:icosahedron}
  The number of ways of tiling the faces of the icosahedron up to full icosahedral symmetry \(I_h\) with \(\tid{}\) tiles of which
  \(\tr{}\) are fixed under \(120^\circ\) rotation and
  \(\tf{}\) are fixed under flipping horizontally
  is given by the expression
  \begin{align*}
    &\frac{1}{120}
    \left(
      \tid{20} +
      24\tid4 +
      20\tid6\tr2 +
      15\tid{10} +
      \tid{10} +
      24\tid{2} +
      20\tid3\tr{} +
      15\tid8\tf4
    \right).
  \end{align*}
\end{theorem}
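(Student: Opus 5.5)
We apply the orbit-counting theorem (\Cref{thm:orbitCountingTheorem}) to the set of tilings of the \(20\) faces, summing over the ten conjugacy classes of \(I_h\) in \Cref{tabl:IcosahedralGroupConjugacyClasses}, weighted by class size. For each \(A\), a tiling is fixed under \(A\) exactly when it is constant along each \(\langle A\rangle\)-orbit of faces, up to the induced face symmetries. So every orbit of size \(k\) contributes a factor \(t_g\), where \(g\) is the symmetry of a face in that orbit induced by \(A^k\). We therefore need, for each class, the cycle structure of \(A\) on the faces together with the induced symmetry on each short orbit. This is exactly the data recorded in the table of the worked example (the copy of \Cref{tabl:IcosahedronConjugacy}).

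\textbf{Free classes.} First we dispose of the classes on which \(\langle A\rangle\) acts freely on faces. A face of the icosahedron can only be stabilized by an element preserving the axis through its center, that is, a \(3\)-fold axis; or by a reflection plane through that center. The rotations of order \(5\) (\(C_2^I, C_{2'}^I\)) fix only vertices. The \(180^\circ\) rotations (\(C_4^I\)) fix only edge midpoints. Central inversion (\(C_5^I\)) and the order-\(10\) rotoreflections (\(C_6^I, C_{6'}^I\)) fix no face, and neither do any of their powers other than the identity. Hence for these classes the faces split into \(20/|A|\) orbits of size \(|A|\), giving \(t_{\id}^{20/|A|}\). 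This produces the terms \(t_{\id}^{20}\), \(24t_{\id}^4\), \(15t_{\id}^{10}\), \(t_{\id}^{10}\), and \(24t_{\id}^{2}\). Note that central inversion has order \(2\), even though the table lists its order as \(1\).

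\textbf{Non-free classes.}
\begin{description}
\item[\(C_3^I\).] For a \(120^\circ\) rotation, the cycle type is \((3^6,1^2)\). The two fixed faces are rotated by \(r\), giving \(t_{\id}^6t_r^2\).
\item[\(C_7^I\).] The cycle type is \((6^3,2)\). Here \(A^2\) rotates the two faces of the short orbit by \(r\), as worked out in the example, giving \(t_{\id}^3t_r\).
\item[\(C_8^I\).] A reflection plane passes through exactly \(4\) faces, each reflected by \(f\). The cycle type is \((2^8,1^4)\), giving \(t_{\id}^8t_f^4\).
\end{description}
Since \(t_g\) depends only on the conjugacy class of \(g\) in \(D_3\), we do not need to track which particular reflection or which direction of rotation is induced.

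Weighting by the class sizes \(1,12,12,20,15,1,12,12,20,15\) and dividing by \(120\) gives the stated expression. The main obstacle is verifying the face cycle structures for \(C_3^I\), \(C_7^I\), and \(C_8^I\) (in particular the count of \(4\) faces fixed by each reflection), since everything else follows from freeness. We would confirm these either geometrically or, as the paper does, by computing the permutation of the face list induced by the lexicographically minimal matrix representative.
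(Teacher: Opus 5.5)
Your proposal is correct and takes the same approach as the paper. Both apply the orbit-counting theorem over the conjugacy classes of \(I_h\), take the contributions \(t_{\id}^6t_r^2\), \(t_{\id}^3t_r\), and \(t_{\id}^8t_f^4\) for \(C_3^I\), \(C_7^I\), \(C_8^I\) from the face-cycle data in \Cref{tabl:IcosahedronConjugacy}, and treat every other class as acting freely. Your geometric justification of freeness, your use of \(t_{\id}^{20/|A|}\) (the paper's proof has the typo ``\(30\) faces''), and your note that central inversion has order \(2\) fix small typos rather than change the method.
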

\begin{proof}
  We proceed by cases, following \Cref{tabl:IcosahedronConjugacy}.
  \begin{description}
    \item[Case 1.] If \(A \in C_3^I\), which is the conjugacy class consisting of rotations by \(120^\circ\), then there are \(2\) faces that are fixed (rotated) under the action of \(A\), and the remaining \(18\) faces are partitioned into \(6\) parts of size \(|A| = 3\). Thus the number of face tilings fixed under \(A \in C_4^I\) is \(t_{\id}^{6}\tr{2}\).
    \item[Case 2.] If \(A \in C_7^I\), which is the conjugacy class consisting of rotations by \(60^\circ\) followed by a reflection, then there are \(2\) faces that are fixed (rotated) under the action of \(A^2\), and thus appear of an orbit of size \(2\) under \(\langle A \rangle\). The orbits of the remaining \(18\) faces consist of \(3\) parts of size \(|A| = 6\). Thus the number of face tilings fixed under \(A \in C_7^I\) is \(t_{\id}^{3}\tr{}\).
    \item[Case 3.] If \(A \in C_8^I\), which is the conjugacy class consisting of reflections, then there are \(4\) faces that are fixed (reflected) under the action of \(A\), and the remaining \(16\) faces are partitioned into \(8\) parts of size \(|A| = 2\). Thus the number of face tilings fixed under \(A \in C_8^I\) is \(t_{\id}^{8}\tf{4}\).
    \item[Case 4.] For the remaining elements, \(A \in I \setminus (C_3^I \cup C_7^I \cup C_8^I)\), which are elements of conjugacy classes that do not appear in the table, the action of the cyclic subgroup \(\langle A \rangle\) on the faces is free. Thus each partitions the \(30\) faces into parts of size \(|A|\), and thus the number of face tilings fixed under \(A\) is \(t_{\id}^{30/|A|}\).
  \end{description}
\end{proof}
\begin{table}[ht]
\[
  \begin{tabular}{P{1.7cm}P{1.6cm}lP{3.4cm}P{1.6cm}}
    conjugacy class  & cycle structure & non-maximal faces & vertex permutation generator & induced subgroup
    \\ \hline & & & & \\[-6pt]
    \multirow{2}{*}{\(\mathcal C^I_3\)} &
    \multirow{2}{*}{\((3^6,1^2)\)} &
    \(F_7 = (v_2,v_6,v_8)\) &
    \((v_2\ v_6\ v_8)\) &
    \(\langle r \rangle\)
    \\
    & &
    \(F_{13} = (v_5,v_7,v_{11})\) &
    \((v_5\ v_{11}\ v_7)\) &
    \(\langle r \rangle\)
    \\[3pt] \hline & & & & \\[-9pt]
    \multirow{2}{*}{\(\mathcal C^I_7\)} &
    \multirow{2}{*}{\((6^3,2)\)} &
    \(F_2 = (v_1,v_3,v_2)\) &
    \((v_1\ v_3\ v_2)\) &
    \(\langle r \rangle\)
    \\
    & &
    \(F_{20} = (v_{10},v_{12},v_{11})\) &
    \((v_{10}\ v_{11}\ v_{12})\) &
    \(\langle r \rangle\)
    \\[3pt] \hline & & & & \\[-9pt]
    \multirow{4}{*}{\(\mathcal C^I_8\)} &
    \multirow{4}{*}{\((2^8,1^4)\)} &
    \(F_9 = (v_3, v_5, v_9)\) &
    \((v_3\ v_9)(v_5)\) &
    \(\langle f \rangle\)
    \\
    & &
    \(F_{10} = (v_3, v_9, v_6)\) &
    \((v_3\ v_9)(v_6)\) &
    \(\langle f \rangle\)
    \\
    & &
    \(F_{11} = (v_4, v_8, v_{10})\) &
    \((v_4\ v_{10})(v_8)\) &
    \(\langle f \rangle\)
    \\
    & &
    \(F_{12} = (v_4, v_{10}, v_7)\) &
    \((v_4\ v_{10})(v_7)\) &
    \(\langle f \rangle\)
  \end{tabular}
\]
  \caption{The cycle structure of the permutations of the faces of an icosahedron under the full icosahedral group \(I_h\), along with the permutations of the corresponding vertices.}
  \label{tabl:IcosahedronConjugacy}
\end{table}

\begin{corollary}
  \label{thm:icosahedronRotation}
  The number of ways of tiling the faces of the icosahedron up to rotational icosahedral symmetry \(I\) with \(\tid{}\) tiles of which
  \(\tr{}\) are fixed under \(120^\circ\) rotation
  is given by the expression
  \begin{align*}
    &\frac{1}{120}
    \left(
      \tid{20} +
      24\tid4 +
      20\tid6\tr2 +
      15\tid{10}
    \right).
  \end{align*}
\end{corollary}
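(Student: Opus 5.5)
The plan is to apply the orbit-counting theorem (\Cref{thm:orbitCountingTheorem}) with the group restricted to the rotational icosahedral group \(I\), reusing the case analysis in the proof of \Cref{thm:icosahedron}. Since \(I\) is a normal subgroup of \(I_h\), each rotational conjugacy class of \(I_h\) in \Cref{tabl:IcosahedralGroupConjugacyClasses} is a union of \(I\)-classes. So I only need the first five rows of that table: \(C_1^I, C_2^I, C_{2'}^I, C_3^I, C_4^I\), of sizes \(1,12,12,20,15\). These sum to \(|I| = 60\).

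The fixed-tiling counts come from \Cref{tabl:IcosahedronConjugacy} together with the general principle in the proof of \Cref{thm:icosahedron}.
\begin{itemize}
\item The identity fixes all \(\tid{20}\) tilings.
\item Classes \(C_2^I\) and \(C_{2'}^I\) are rotations of order \(5\) about an axis through two opposite vertices. They fix no face, since they do not appear in \Cref{tabl:IcosahedronConjugacy}. So \(\langle A\rangle\) acts freely, giving \(20/5 = 4\) orbits and \(\tid4\) fixed tilings. These classes contribute \(24\tid4\) in total.
\item Class \(C_3^I\) is Case 1 of \Cref{thm:icosahedron}. It fixes two faces, each rotated by \(r\), and permutes the remaining \(18\) faces in \(6\) orbits of size \(3\). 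This gives \(\tid6\tr2\) fixed tilings per element, or \(20\tid6\tr2\) in total.
\item Class \(C_4^I\) consists of half-turns about axes through midpoints of opposite edges. No face is mapped to itself, which is again consistent with its absence from the table. So the action is free, giving \(10\) orbits and \(15\tid{10}\) in total.
\end{itemize}

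Summing these and dividing by \(|I|\) gives the stated polynomial. The one point needing care is the normalizing constant. Orbit counting over \(I\) divides by \(|I| = 60\), and the sanity check \(\tid{}=\tr{}=1\) must return exactly one tiling, which forces \(\tfrac{1}{60}(1+24+20+15) = 1\). So I would prove the expression with the prefactor \(\tfrac{1}{60}\) and note that the \(\tfrac{1}{120}\) in the displayed statement should read \(\tfrac{1}{60}\), matching \Cref{thm:dodecahedronRotation}.

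The main obstacle is justifying freeness for the classes absent from the table, namely that order-\(5\) rotations and half-turns fix no face. I would argue this geometrically. A rotation fixing a face of the icosahedron must fix its centroid, so its axis passes through a face center. The only rotations with such axes are the \(120^\circ\) rotations in \(C_3^I\). Vertex axes and edge-midpoint axes avoid face centers, so those rotations act freely on faces. One could alternatively confirm this directly from the matrix representation.
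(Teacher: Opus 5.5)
Your proposal is correct and is essentially the paper's argument: the corollary comes from keeping only the five rotational classes in the orbit-counting sum behind \Cref{thm:icosahedron}. Your geometric argument that the vertex-axis and edge-axis rotations act freely on the faces supplies what the paper reads off from \Cref{tabl:IcosahedronConjugacy}. You are also right that the prefactor must be \(1/|I| = \tfrac{1}{60}\) and that the \(\tfrac{1}{120}\) in the statement is a typo: with \(\tid{} = \tr{} = 2\) the corrected formula gives \(17824\), the value listed for \oeis{A054472}.
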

\begin{oeis*}
  The number of \(n\)-colorings of the faces of the icosahedron up to the \(60\) symmetries of the rotational icosahedral group \(I\) appears in the OEIS as sequence \oeis{A054472} (\(0\)-indexed): \[
    0, 1, 17824, 58130055, 18325477888, 1589459765875, 60935989677984, \ldots,
  \]
  and the number of \(n\)-colorings up to the \(120\) symmetries of the icosahedral group \(I_h\) appears in the OEIS as sequence \oeis{A252704} (\(1\)-indexed): \[
    1, 9436, 29131965, 9164844880, 794760482005, 30468267440892, \ldots.
  \]
\end{oeis*}
\subsection{Rhombic triacontahedron}
The rhombic triacontahedron, illustrated in \Cref{fig:rhombicTriacontahedron}, is a Catalan solid with \(30\) rhombic faces and is the polyhedral dual of the icosidodecahedron.
Its \(32\) vertices are generated by the orbits of \(v = (1,1,1)\) and \(v' = (0,1,\phi)\) under the action of its isometry group, the full icosahedral group \(I_h\) of order \(120\). The orbits have size \(20\) and \(12\) respectively.

\par\medskip\noindent
\begin{minipage}{\textwidth}\captionsetup{type=figure}
  \centering
    \begin{subfigure}[b]{0.35\textwidth}
      \centering
      \includegraphics[scale=0.5]{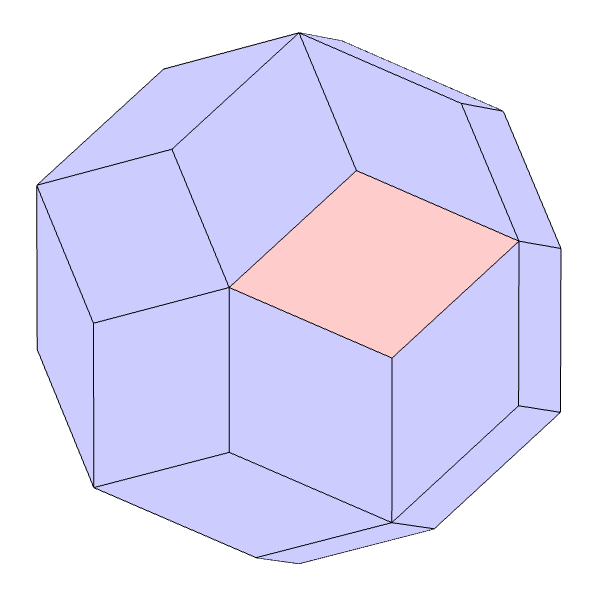}
      \caption{}\label{fig:rhombicTriacontahedronA}
    \end{subfigure}
    \begin{subfigure}[b]{0.35\textwidth}
      \centering
      \includegraphics[scale=0.5]{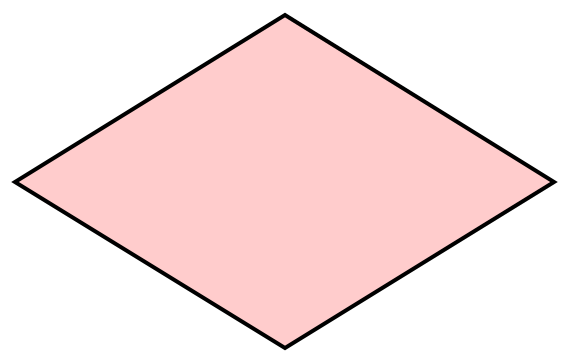}
      \caption{}\label{fig:rhombicTriacontahedronB}
    \end{subfigure}
    \caption[Illustrations of the rhombic triacontahedron and its faces.]{
      Illustrations of (a) the rhombic triacontahedron with one face highlighted and (b) a face of the rhombic triacontahedron.
    }
    \label{fig:rhombicTriacontahedron}
\end{minipage}
\par\medskip

The induced symmetry group on the faces of the rhombic triacontahedron is the dihedral group of the rhombus \[
  D_2 = \langle r, f \mid r^2 = f^2 = (rf)^2 = e \rangle,
\] where we follow the convention that \(rf\) swaps the \(v\) vertices (on the short diagonal) and \(f\) swaps the \(v'\) vertices (on the long diagonal).

\begin{theorem}
  \label{thm:rhombicTriacontahedron}
  The number of ways of tiling the faces of the rhombic triacontahedron up to full icosahedral symmetry \(I_h\) with \(\tid{}\) tiles of which
  \(\tr{}\) are fixed under \(180^\circ\) rotation,
  \(\tf{}\) are fixed under flipping across the short diagonal, and
  \(\trf{}\) are fixed under flipping across the long diagonal
  is given by the expression
  \begin{equation}
    \frac{1}{120}
    \left(
      \tid{30} +
      24\tid6 +
      20\tid{10} +
      15\tid{14}\tr2+
      \tid{15} +
      24\tid3 +
      20\tid5 +
      15\tid{13}\tf2\trf2
    \right).
  \end{equation}
\end{theorem}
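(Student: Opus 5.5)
We apply the orbit-counting theorem (\Cref{thm:orbitCountingTheorem}) to the action of \(I_h\) on the set of tilings, in the same way as for the dodecahedron and icosahedron. For each of the ten conjugacy classes in \Cref{tabl:IcosahedralGroupConjugacyClasses} we determine two things: the cycle structure of a representative \(A\) on the \(30\) faces, and, for each orbit of \(\langle A \rangle\) of size \(n < |A|\), which element of \(D_2\) is induced on a face of that orbit by \(A^n\).

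The faces of the rhombic triacontahedron correspond bijectively and \(I_h\)-equivariantly to the \(30\) edges of the icosahedron. Under this correspondence the long diagonal of a rhombus joins the two \(v'\) vertices, which are the endpoints of the edge. The short diagonal joins the two \(v\) vertices, which are the centres of the two adjacent triangles. This makes the geometry transparent, and we use it to organise the cases.

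\emph{Rotations.}
\begin{itemize}
  \item The identity contributes \(\tid{30}\).
  \item A \(72^\circ\) or \(144^\circ\) rotation (classes \(C_2^I\), \(C_{2'}^I\)) has its axis through two opposite vertices of the icosahedron. It fixes no edge, so it acts freely with six \(5\)-cycles. Together the two classes contribute \(24\tid6\).
  \item A \(120^\circ\) rotation (\(C_3^I\)) has its axis through two opposite triangle centres. It fixes no edge, giving ten \(3\)-cycles and the term \(20\tid{10}\).
  \item A \(180^\circ\) rotation (\(C_4^I\)) has its axis through the midpoints of two opposite edges. These two faces are fixed and rotated by \(180^\circ\), inducing \(r\). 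The other \(28\) faces fall into \(14\) transpositions, giving \(15\tid{14}\tr2\).
\end{itemize}

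\emph{Improper elements.}
\begin{itemize}
  \item The central inversion fixes no face, since it sends each edge to the opposite edge. This gives \(15\) orbits and the term \(\tid{15}\).
  \item For \(A \in C_6^I \cup C_{6'}^I\) of order \(10\), the power \(A^2\) is a rotation of order \(5\) and \(A^5\) is the central inversion; both act freely. Hence \(\langle A\rangle\) acts freely, giving three orbits of size \(10\) and the term \(24\tid3\).
  \item For \(A \in C_7^I\) of order \(6\), the power \(A^2\) is a \(120^\circ\) rotation and \(A^3\) is the central inversion, both free. This gives five orbits of size \(6\) and the term \(20\tid5\).
  \item A reflection \(A \in C_8^I\) has a mirror plane (for example \(x = 0\) for \(R_0\)) that contains two opposite edges of the icosahedron and perpendicularly bisects two other opposite edges. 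The corresponding four faces are fixed and all remaining faces lie in \(13\) transpositions.
\end{itemize}

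On a face whose edge lies in the mirror, the two \(v'\) endpoints are fixed and the two \(v\) vertices are swapped. On a face whose edge is bisected, the endpoints are swapped and the \(v\) vertices are fixed. So two of the fixed faces are reflected by \(f\) and two by \(rf\), giving \(15\tid{13}\tf2\trf2\).

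\emph{Main obstacle.} The main point needing care is this last case: identifying exactly which fixed faces are reflected across which diagonal under the stated convention for \(f\) and \(rf\). It also requires confirming that no other class has a fixed face or a short orbit. I would verify both with the explicit matrix representatives and the vertex orbits of \(v = (1,1,1)\) and \(v' = (0,1,\phi)\), recording the data in a table like the earlier ones. One check is that the plane \(x = 0\) contains the edge \((0,1,\phi)\)–\((0,-1,\phi)\) and bisects the edge \((1,\phi,0)\)–\((-1,\phi,0)\). Finally, weighting each term by its class size gives the stated expression.
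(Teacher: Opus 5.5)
Your proposal is correct. It uses the same overall strategy as the paper: orbit counting over the ten conjugacy classes of \(I_h\), with \(\langle A\rangle\) acting freely except for the \(180^\circ\) rotations \(C_4^I\) and the reflections \(C_8^I\). The difference is in how you obtain the class-by-class data.

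The paper reads the cycle structures and induced face symmetries off \Cref{tabl:RhombicTriacontahedronConjugacy}, which is computed from the explicit matrices and canonical vertex labels. That procedure is mechanical and uniform across all the polyhedra, but the written proof leaves both the freeness of the remaining classes and the identification of the fixed faces to the computation.

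You instead use the \(I_h\)-equivariant bijection between rhombic faces and icosahedron edges, which lets you justify everything by hand:
\begin{itemize}
  \item Freeness of the order-\(10\) and order-\(6\) classes follows because a nontrivial stabilizer in \(\langle A\rangle\) must contain \(A^2\) or \(A^{|A|/2}\), and both act freely.
  \item The reflection case follows from the mirror containing two opposite edges and bisecting two others. Two fixed rhombi therefore swap their \(v\) vertices (\(rf\)), and two swap their \(v'\) vertices (\(f\)).
\end{itemize}

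This buys a genuine check. The paper's written Case~2 says a reflection fixes only \(2\) faces and contributes \(\tid{14}\tf{2}\). That contradicts both its own table, with cycle structure \((2^{13},1^4)\), and the stated term \(15\tid{13}\tf{2}\trf{2}\). Your derivation gives the correct count and agrees with the table, including which diagonal each fixed face is flipped across.
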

\begin{proof}
  We proceed by cases, following \Cref{tabl:RhombicTriacontahedronConjugacy}.
  \begin{description}
    \item[Case 1.] If \(A \in C_4^I\), which is the conjugacy class consisting of rotations by \(180^\circ\), then there are \(2\) faces that are fixed (rotated) under the action of \(A\), and the remaining \(28\) faces are partitioned into \(14\) parts of size \(|A| = 2\). Thus the number of face tilings fixed under \(A \in C_4^I\) is \(t_{\id}^{14}\tr{2}\).
    \item[Case 2.] If \(A \in C_8^I\), which is the conjugacy class consisting of reflections, then there are \(2\) faces that are fixed (reflected) under the action of \(A\), and the remaining \(28\) faces are partitioned into \(14\) parts of size \(|A| = 2\). Thus the number of face tilings fixed under \(A \in C_8^I\) is \(t_{\id}^{14}\tf{2}\).
    \item[Case 3.] For the remaining elements, \(A \in I \setminus (C_4^I \cup C_8^I)\), which are elements of conjugacy classes that do not appear in the table, the action of the cyclic subgroup \(\langle A \rangle\) on the faces is free. Thus each partitions the \(30\) faces into parts of size \(|A|\), and thus the number of face tilings fixed under \(A\) is \(t_{\id}^{30/|A|}\).
  \end{description}
\end{proof}

\begin{table}
  \[
  \begin{tabular}{P{1.7cm}P{1.6cm}lP{3.4cm}P{1.6cm}}
    conjugacy class  & cycle structure & non-maximal faces & vertex permutation generator & induced subgroup
    \\ \hline & & & & \\[-6pt]
    \multirow{2}{*}{\(\mathcal C^I_4\)} &
    \multirow{2}{*}{\((2^{14},1^2)\)} &
    \(F_{16} = (v_7, v'_7, v_{13}, v'_5)\) &
    \((v_7\ v_{13})(v'_5\ v'_7)\) &
    \(\langle r \rangle\)
    \\
    & &
    \(F_{17} = (v_8, v'_6, v_{14}, v'_8)\) &
    \((v_8\ v_{14})(v'_6\ v'_8)\) &
    \(\langle r \rangle\)
    \\[3pt] \hline & & & & \\[-9pt]
    \multirow{4}{*}{\(\mathcal C^I_8\)} &
    \multirow{4}{*}{\((2^{13},1^4)\)} &
    \(F_{11} = (v'_3, v_9, v'_9, v_{10})\) &
    \((v'_3\ v'_9)(v_9)(v_{10})\) &
    \(\langle f \rangle\)
    \\
    & &
    \(F_{15} = (v'_4, v_{12}, v'_{10}, v_{11})\) &
    \((v'_4\ v'_{10})(v_{11})(v_{12})\) &
    \(\langle f \rangle\)
    \\
    & &
    \(F_{16} = (v_7, v'_7, v_{13}, v'_5)\) &
    \((v_7\ v_{13})(v'_5)(v'_7)\) &
    \(\langle rf \rangle\)
    \\
    & &
    \(F_{17} = (v_8, v'_6, v_{14}, v'_8)\) &
    \((v_8\ v_{14})(v'_6)(v'_8)\) &
    \(\langle rf \rangle\)
  \end{tabular}
  \]
  \caption{The cycle structure of the permutations of the faces of a rhombic triacontahedron under the full icosahedral group \(I_h\), along with the permutations of the corresponding vertices.}
  \label{tabl:RhombicTriacontahedronConjugacy}
\end{table}

\begin{corollary}
  The number of ways of tiling the faces of the rhombic triacontahedron up to full icosahedral symmetry \(I_h\) with \(\tid{}\) tiles of which
  \(\tr{}\) are fixed under \(180^\circ\) rotation
  is given by the expression
  \begin{equation}
    \frac{1}{60}\left(
      \tid{30} +
      24\tid6 +
      20\tid{10} +
      15\tid{14}\tr2
    \right).
  \end{equation}
\end{corollary}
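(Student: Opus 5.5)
The plan is to obtain this formula exactly as the earlier corollaries were obtained from their theorems. We apply the orbit-counting theorem (\Cref{thm:orbitCountingTheorem}) to the subgroup \(I \le I_h\) of orientation-preserving isometries, with \(|I| = 60\). The summands are then just the terms of \Cref{thm:rhombicTriacontahedron} coming from the rotational conjugacy classes \(C_1^I, C_2^I, C_{2'}^I, C_3^I, C_4^I\) in \Cref{tabl:IcosahedralGroupConjugacyClasses}. The number of tilings fixed by a given \(A\) depends only on \(A\) and not on the ambient group, so the values \(|X^A|\) computed in the proof of \Cref{thm:rhombicTriacontahedron} can be reused verbatim. (The statement says ``full icosahedral symmetry \(I_h\)'', but the \(\frac{1}{60}\) and the listed terms make clear that the rotational group \(I\) is intended, and I will prove it in that form.)

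The key steps, in order, are as follows.

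First, note that any rotation in the stabilizer of a rhombic face must induce an orientation-preserving symmetry of the rhombus. Such a symmetry is either \(e\) or \(r\), so only \(\tid{}\) and \(\tr{}\) can appear, and \(\tf{}, \trf{}\) drop out.

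Second, go through the classes with their sizes \(1, 12, 12, 20, 15\):
\begin{description}
  \item[Identity.] The identity contributes \(\tid{30}\).
  \item[Order \(5\) and order \(3\).] By Case~3 of the proof of \Cref{thm:rhombicTriacontahedron}, the rotations of order \(5\) (classes \(C_2^I\) and \(C_{2'}^I\), together \(24\) elements) and of order \(3\) (class \(C_3^I\), \(20\) elements) act freely on the faces. Their contributions are therefore \(\tid{30/5} = \tid6\) and \(\tid{30/3} = \tid{10}\). Geometrically, their axes pass through the \(v'\) and \(v\) vertices respectively, never through a face center.
  \item[Order \(2\).] By Case~1 and \Cref{tabl:RhombicTriacontahedronConjugacy}, each half-turn in \(C_4^I\) fixes two faces, rotating each by \(180^\circ\), and pairs the remaining \(28\) faces. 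This contributes \(\tid{14}\tr2\).
\end{description}

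Finally, summing \(\tid{30} + 24\tid6 + 20\tid{10} + 15\tid{14}\tr2\) and dividing by \(60\) gives the stated expression.

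The only point needing any care is the freeness claim for the order-\(5\) and order-\(3\) rotations. Here one argues that a nontrivial power of such a rotation fixing a face would have to induce a rotational symmetry of that face of order \(5\) or \(3\). The rhombus has no such symmetry, so no face is fixed by any nontrivial element of \(\langle A \rangle\), and all orbits have size \(|A|\). The rest is bookkeeping inherited from the theorem.
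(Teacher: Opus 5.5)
Your proposal is correct and matches the paper's (implicit) argument: you keep only the terms of Theorem~\ref{thm:rhombicTriacontahedron} from the five rotational classes \(C_1^I, C_2^I, C_{2'}^I, C_3^I, C_4^I\) and divide by \(|I| = 60\). You are also right that the ``\(I_h\)'' in the statement should be \(I\). Your argument for freeness of the order-\(5\) and order-\(3\) rotations adds a justification the paper only asserts by computation: a nontrivial rotation fixing a face induces a nontrivial rotation of it, and a rhombus has no rotation of order \(3\) or \(5\).
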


\begin{oeis*}
  The number of \(n\)-colorings up to the \(60\) rotational symmetries of the icosahedral group \(I\) appears in the OEIS as sequence \oeis{A282670} (\(0\)-indexed): \[
    0, 1, 17912448, 3431529649899, 19215359484207104, 15522042948408209375,
		\ldots,
  \]
  and the number of \(n\)-colorings of the faces of the rhombic triacontahedron up to the \(120\) symmetries of the icosahedral group \(I_h\) appears in the OEIS as sequence \oeis{A337963}: \[
    1, 8972888, 1715781087090, 9607681898535232, 7761021569825850025,
		\ldots.
  \]
\end{oeis*}
The faces of the rhombic triacontahedron are in bijection with the faces of the icosahedron and dodecahedron, and so by the analogous process to the construction shown in \Cref{fig:SolLewitt}, there are \(\oeis{A282670}(2) = 17\,912\,448\) subsets of the edges of the icosahedron and dodecahedron up to rotation and reflection. Vejdemo-Johansson computes the number of connected, non-planar, proper edge subsets of the dodecahedron is \(\oeis{A393693}(4) = 2\,423\,206\) and of the icosahedron is \(\oeis{A393693}(5) = 16\,096\,166\) \cite{VejdemoJohansson2026}.
\begin{example}
  Dave Barber \cite{Barber2009} proposed a variant of Tantrix Rock called \textit{Lambda Rock} on the faces of a rhombic triacontahedron, a simplified version of which is shown in Figure \ref{fig:LambdaRock}, which also shows tiles such that \(\tid{} = 6\), \(t_r = t_f = t_{rf} = 2\). Thus, there are \(1842282664408676415816\) ways of tiling the rhombic dodecahedron with these tiles up to rotation and reflection of the rhombic triacontahedron, while disallowing rotation and reflection of the faces.

  \begin{figure}
    \begin{subfigure}[b]{0.19\linewidth}
      \centering
      \includegraphics[width=\linewidth]{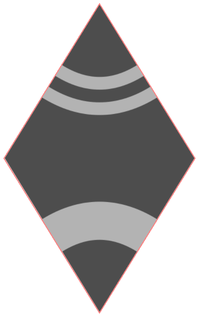}
      \caption{}\label{fig:LambdaRock1}
    \end{subfigure}
    \hfill
    \begin{subfigure}[b]{0.19\linewidth}
      \centering
      \includegraphics[width=\linewidth]{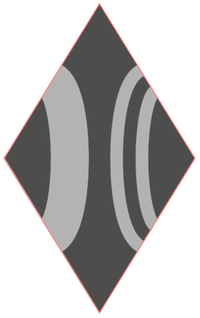}
      \caption{}\label{fig:LambdaRock2}
    \end{subfigure}
    \hfill
    \begin{subfigure}[b]{0.19\linewidth}
      \centering
      \includegraphics[width=\linewidth]{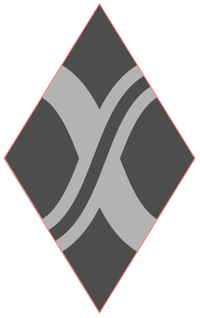}
      \caption{}\label{fig:LambdaRock3}
    \end{subfigure}
    \hfill
    \begin{subfigure}[b]{0.39\linewidth}
      \centering
      \includegraphics[width=\linewidth]{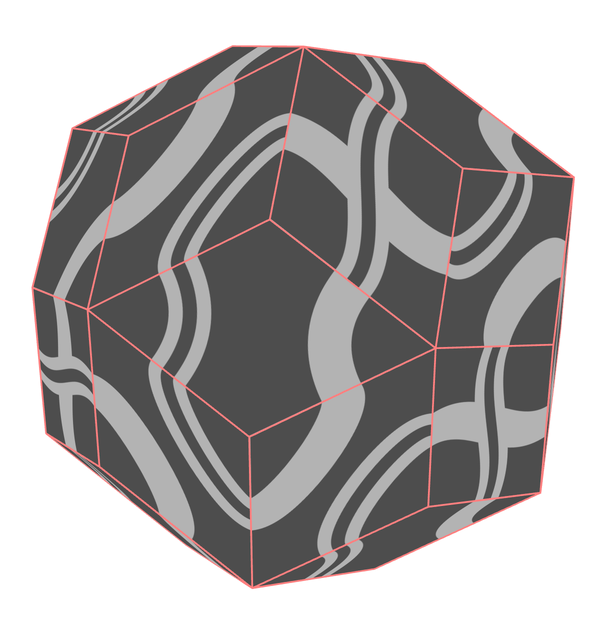}
      \caption{}\label{fig:LambdaRockRhombicTriacontahedron}
    \end{subfigure}
    \caption{The three essentially different tile designs (a) to (c) used in a simplified version of Dave Barber's Lambda Rock puzzle, along with (d) a rhombic triacontahedron tiled with random choices of these tiles.}
    \label{fig:LambdaRock}
  \end{figure}
\end{example}

\subsection{Triakis icosahedron, deltoidal hexecontahedron, and pentakis dodecahedron}

The triakis icosahedron, illustrated in \Cref{subfig:polyhedron_TriakisIcosahedron}, is a Catalan solid with \(60\) isosceles triangular faces and is the polyhedral dual of the truncated dodecahedron.
Its \(32\) vertices are generated by the orbits of \(v = (1,1,1)\) and \(v' = c(0,1,\phi)\) where \(\displaystyle c = (7 \sqrt{5}-5)/10 \approx 1.06525\) under the action of its isometry group, the full icosahedral group \(I_h\) of order \(120\). The orbits have size \(20\) and \(12\) respectively.

\par\medskip\noindent
\begin{minipage}{\textwidth}\captionsetup{type=figure}
  \centering
  \begin{subfigure}[b]{0.35\textwidth}
    \centering
    \includegraphics[scale=0.4]{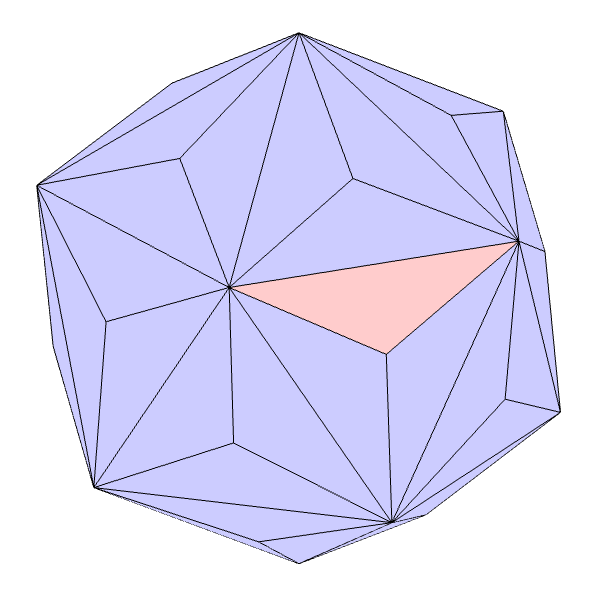}
    \caption{}\label{subfig:polyhedron_TriakisIcosahedron}
  \end{subfigure}
  \begin{subfigure}[b]{0.3\textwidth}
    \centering
    \includegraphics[scale=0.4]{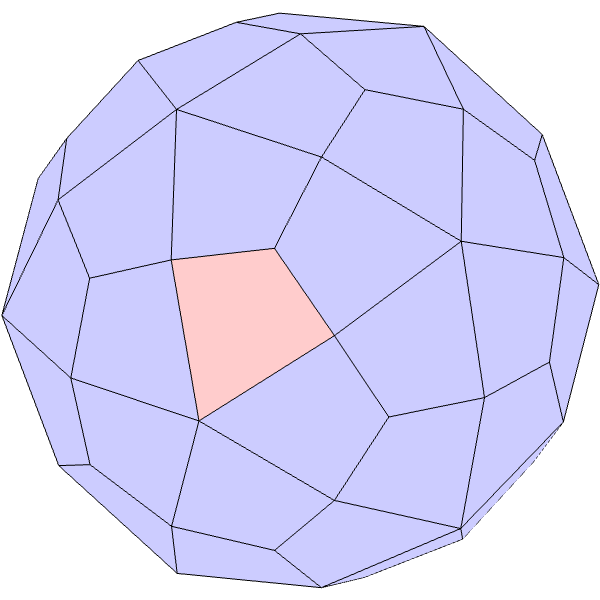}
    \caption{}\label{subfig:polyhedron_DeltoidalHexecontahedron}
  \end{subfigure}
  \begin{subfigure}[b]{0.3\textwidth}
    \centering
    \includegraphics[scale=0.4]{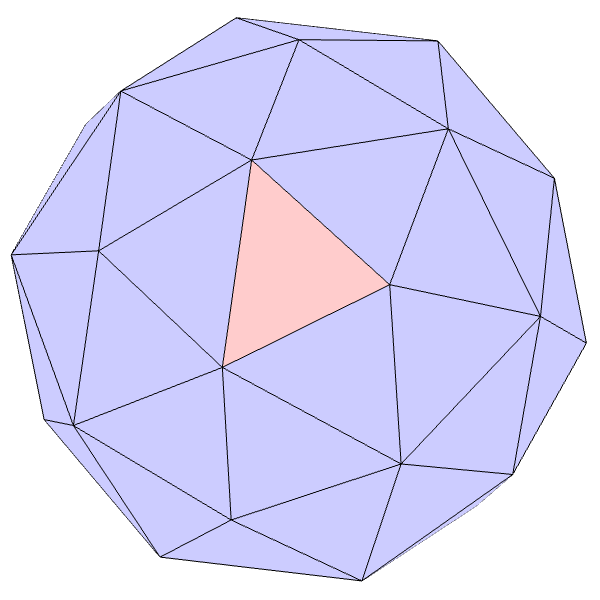}
    \caption{}\label{subfig:polyhedron_PentakisDodecahedron}
  \end{subfigure}
  \\
  \begin{subfigure}[b]{0.35\textwidth}
    \centering
    \includegraphics[scale=0.4]{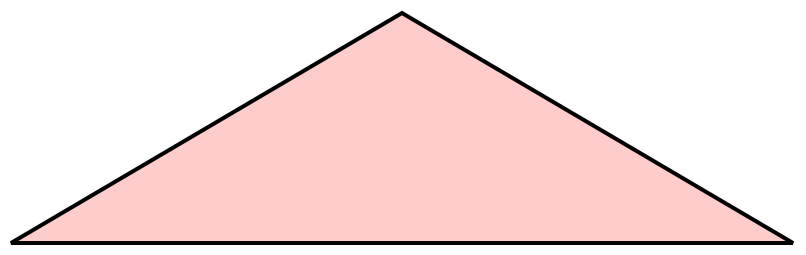}
    \caption{}
  \end{subfigure}
  \begin{subfigure}[b]{0.3\textwidth}
    \centering
    \includegraphics[scale=0.4]{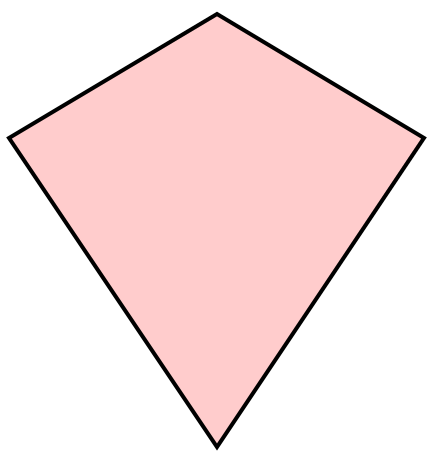}
    \caption{}
  \end{subfigure}
  \begin{subfigure}[b]{0.3\textwidth}
    \centering
    \includegraphics[scale=0.4]{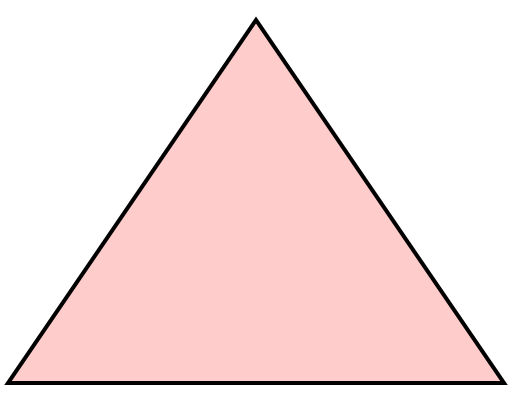}
    \caption{}
  \end{subfigure}
  \caption[Illustrations of the pentagonal hexecontahedron and its faces.]{
    Illustrations of
    (a) the triakis icosahedron with a face highlighted,
    (b) the deltoidal hexecontahedron with a face highlighted,
    (c) the pentakis dodecahedron with a face highlighted,
    (d) a face of the triakis icosahedron,
    (e) a face of the deltoidal hexecontahedron, and
    (f) a face of the pentakis dodecahedron.
  }
  \label{fig:triakisIcosahedron}
\end{minipage}
\par\medskip

The deltoidal hexecontahedron, illustrated in \Cref{subfig:polyhedron_DeltoidalHexecontahedron}, is a Catalan solid with \(60\) deltoidal faces and is the polyhedral dual of the rhombicosidodecahedron.
Its \(62\) vertices are generated by the orbits of \(v = (1,0,0)\), \(v' = c_1(1,1,1)\), and \(v'' = c_2(0,1,\phi)\) where \(c_1 = \left(\sqrt{5}+4\right)/11\) and \(c_2 = \phi/3\), under the action of its isometry group, the full icosahedral group \(I_h\) of order \(120\). The orbits have size \(30\), \(20\), and \(12\) respectively.

The pentakis dodecahedron, illustrated in \Cref{subfig:polyhedron_PentakisDodecahedron}, is a Catalan solid with \(60\) isosceles triangular faces and is the polyhedral dual of the truncated icosahedron.
Its \(32\) vertices are generated by the orbits of \(v = (1,1,1)\) and \(v' = c(0,1,\phi)\) where \(\displaystyle c = (3\sqrt{5}+27)/38 \approx 0.887058\), under the action of its isometry group, the full icosahedral group \(I_h\) of order \(120\). The orbits have size \(20\) and \(12\) respectively.

The induced symmetry group on the faces of the triakis icosahedron, of the deltoidal hexecontahedron, and of the pentakis dodecahedron is the reflection group \(\langle f \mid f^2 = e \rangle\).

\begin{theorem}
  \label{thm:triakisIcosahedron}
  The number of ways of tiling the faces of the triakis icosahedron, pentakis dodecahedron, and deltoidal hexecontahedron up to full icosahedral symmetry \(I_h\) with \(\tid{}\) tiles, where \(\tf{}\) of them have reflectional symmetry, is given by the expression
  \begin{align*}
    &\frac{1}{120}
    \left(
      \tid{60} +
      24\tid{12} +
      20\tid{20} +
      15\tid{30}+
      \tid{30} +
      24\tid6 +
      20\tid{10} +
      15\tid{28}\tf4
    \right) \\
    &\qquad=\frac{1}{120} \left(
      \tid{60} +
      16\tid{30} +
      20\tid{20} +
      24\tid{12} +
      20\tid{10} +
      24\tid6 +
      15\tid{28}\tf4
    \right).
  \end{align*}
\end{theorem}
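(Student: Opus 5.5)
The plan is to apply the orbit-counting theorem (\Cref{thm:orbitCountingTheorem}) to the action of \(I_h\) on tilings, running through the ten conjugacy classes of \Cref{tabl:IcosahedralGroupConjugacyClasses}. Unlike the preceding proofs, I would not compute a table of non-maximal faces directly. Instead I would use a structural fact shared by all three polyhedra: each of the \(60\) faces is the union of exactly two fundamental domains of \(I_h\), swapped by a single mirror. I would check this from the vertex data given above. The \(120\) fundamental triangles are cut out by the \(15\) mirror planes, and each face of these solids is bisected by exactly one mirror plane:
\begin{itemize}
\item for the triakis icosahedron and the pentakis dodecahedron, the mirror runs from the apex to the midpoint of the base;
\item for the deltoidal hexecontahedron, the mirror is the symmetry axis of the kite.
\end{itemize}
Since \(I_h\) acts transitively on the \(60\) faces, every face stabilizer then has order \(120/60 = 2\) and is generated by one reflection in \(C_8^I\). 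This is consistent with the stated induced group \(\langle f \rangle\).

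The key step is to show that for every \(A \notin C_8^I\), the cyclic group \(\langle A\rangle\) acts freely on the faces. This means no power \(A^k \neq \mathrm{I}\) fixes a face, i.e.\ no power of \(A\) is a reflection. For rotations this is immediate, since powers of rotations are rotations. For the central inversion, its only nontrivial power is itself. For the rotoreflections in \(C_6^I\) and \(C_{6'}^I\) (order \(10\)) and in \(C_7^I\) (order \(6\)), the powers are:
\begin{itemize}
\item even powers, which are rotations;
\item odd powers, which are rotoreflections about the same axis; the only one of these with order \(2\) is \(A^5\) (respectively \(A^3\)), which is a \(180^\circ\) rotation composed with a perpendicular reflection, i.e.\ the central inversion.
\end{itemize}
In particular, no power is a reflection. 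So each such \(A\) fixes \(t_{\id}^{60/|A|}\) tilings. Using the orders from the table, and taking the central inversion to have order \(2\) (so it contributes \(t_{\id}^{30}\)), this gives the terms \(t_{\id}^{60}\), \(24t_{\id}^{12}\), \(20t_{\id}^{20}\), \(15t_{\id}^{30}\), \(t_{\id}^{30}\), \(24t_{\id}^{6}\) and \(20t_{\id}^{10}\).

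For \(A\in C_8^I\), I would count the faces \(A\) fixes by double counting. Each of the \(60\) faces is fixed by exactly one of the \(15\) reflections, and conjugacy makes the count the same for every reflection, so each reflection fixes \(60/15=4\) faces. On each of these it induces \(f\). The remaining \(56\) faces fall into \(28\) orbits of size \(2\), giving \(t_{\id}^{28}t_f^4\) fixed tilings. Weighting by the class sizes and dividing by \(120\) gives the first line of the statement. Collecting \(15t_{\id}^{30}+t_{\id}^{30}=16t_{\id}^{30}\) gives the second line.

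The main obstacle is justifying the ``each face is exactly two fundamental domains bisected by a mirror'' claim uniformly for all three solids. If a clean geometric argument is elusive, I would fall back to the paper's computational method: use the given coordinates to compute the face permutation of the lexicographically minimal representative of each class and confirm the cycle structure \((2^{28},1^4)\) for \(C_8^I\) and free actions elsewhere.
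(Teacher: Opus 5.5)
Your proposal is correct. Its skeleton matches the paper's proof: orbit counting over the ten classes, with \(C_8^I\) as the only class whose cyclic subgroup does not act freely. The difference is in how the case data are justified.

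\textbf{How the paper gets the case data.} It reads the cycle structure \((2^{28},1^4)\), and the freeness of every other class, off computer-generated tables. These are built from explicit coordinates and lexicographically minimal representatives, one table per solid.

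\textbf{How you get them.} You derive them structurally:
\begin{itemize}
\item Face stabilizers have order \(2\) and are generated by a reflection.
\item Hence \(\langle A\rangle\) acts freely unless some nontrivial power of \(A\) is a reflection, which you rule out class by class.
\item Double counting over the single class of \(15\) reflections gives \(60/15=4\) fixed faces per reflection.
\end{itemize}

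\textbf{What each route buys.} Your route treats all three solids at once and explains why they share a formula: it holds for any \(I_h\)-face-transitive solid with \(60\) faces whose stabilizers are reflections. It also correctly handles the central inversion as an element of order \(2\); the class table lists it with order \(1\). The paper's route gives explicit, reproducible face and vertex data, uniform with its other sections.

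\textbf{Two simplifications.}
\begin{itemize}
\item For the rotoreflections, write each element of \(I_h\setminus I\) as \(-R\) with \(R\in I\). An odd power \((-R)^k=-R^k\) is a reflection exactly when \(R^k\) is a half-turn, which is impossible when \(R\) has order \(3\) or \(5\).
\item The step you flag as the main obstacle can be closed without locating fundamental domains. Orbit--stabilizer gives a face stabilizer \(\{\mathrm{I},g\}\) of order \(2\).
\begin{itemize}
\item \(g\) is not the central inversion, which sends each face to the opposite face.
\item \(g\) is not a rotation. A rotation preserving a face cannot have its axis in the face plane, since it would swap the two sides of that plane and so move the polyhedron off itself. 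So the axis would be the face normal, and \(g\) would be a half-turn symmetry of the face. A triangle has no half-turn symmetry, and a kite with one is a rhombus, which these kites are not.
\end{itemize}
Hence \(g\) is a reflection, necessarily in \(C_8^I\), and it induces \(f\) on the face: a nontrivial linear map cannot fix the three linearly independent vertex vectors of a face.
\end{itemize}
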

\begin{proof}
  We proceed by cases, following \Cref{tabl:TriakisIcosahedronConjugacy,tabl:DeltoidalHexecontahedronConjugacy,tabl:PentakisDodecahedronConjugacy}.
  \begin{description}
    \item[Case 1.] If \(A \in C_8^I\), which is the conjugacy class consisting of the fifteen reflections of the polyhedron, then there are \(4\) faces that are fixed (reflected) under the action of \(A\), and the remaining \(56\) faces are partitioned into \(28\) parts of size \(|A| = 2\). Thus the number of face tilings fixed under \(A \in C_8^I\) is \(t_{\id}^{28}\tf{4}\).
    \item[Case 2.] For the remaining elements, \(A \in I \setminus C_8^I\), which are elements of conjugacy classes that do not appear in the tables, the action of the cyclic subgroup \(\langle A \rangle\) on the faces is free. Thus each partitions the \(60\) faces into parts of size \(|A|\), and thus the number of face tilings fixed under \(A\) is \(t_{\id}^{60/|A|}\).
  \end{description}
\end{proof}

\begin{table}[ht]
  \[
  \begin{tabular}{P{1.7cm}P{1.6cm}lP{3.4cm}P{1.6cm}}
    conjugacy class  & cycle structure & non-maximal faces & vertex permutation generator & induced subgroup
    \\[3pt] \hline & & & & \\[-9pt]
    \multirow{4}{*}{\(\mathcal C^I_8\)} &
    \multirow{4}{*}{\((2^{28},1^4)\)} &
    \(F_{20} = (v'_3,v_9,v'_9)\) &
    \((v'_3\ v'_9)(v_9)\) &
    \(\langle f \rangle\)
    \\
    & &
    \(F_{24} = (v'_3,v'_9,v_{10})\) &
    \((v'_3\ v'_9)(v_{10})\) &
    \(\langle f \rangle\)
    \\
    & &
    \(F_{29} = (v'_4,v_{12},v'_{10})\) &
    \((v'_4\ v'_{10})(v_{12})\) &
    \(\langle f \rangle\)
    \\
    & &
    \(F_{30} = (v'_4,v'_{10},v_{11})\) &
    \((v'_4\ v'_{10})(v_{11})\) &
    \(\langle f \rangle\)
  \end{tabular}
  \]
  \caption{
    The cycle structure of the permutations of the vertices of a triakis icosahedron under each conjugacy class of \(I_h\).
  }
  \label{tabl:TriakisIcosahedronConjugacy}
\end{table}

\begin{table}[ht]
  \[
  \begin{tabular}{P{1.7cm}P{1.6cm}lP{3.4cm}P{1.6cm}}
    conjugacy class  & cycle structure & non-maximal faces & vertex permutation generator & induced subgroup
    \\[3pt] \hline & & & & \\[-9pt]
    \multirow{4}{*}{\(\mathcal C^I_8\)} &
    \multirow{4}{*}{\((2^{28},1^4)\)} &
    \(F_{29} = (v''_{10},v'_5,v''_{18},v_9)\) &
    \((v''_{10}\ v''_{18})(v'_5)(v_9)\) &
    \(\langle f \rangle\)
    \\
    & &
    \(F_{30} = (v''_{11},v_{10},v''_{19},v'_6)\) &
    \((v''_{11}\ v''_{19})(v_{10})(v'_6)\) &
    \(\langle f \rangle\)
    \\
    & &
    \(F_{31} = (v''_{12},v_{11},v''_{20},v'_7)\) &
    \((v''_{12}\ v''_{20})(v_{11})(v'_7)\) &
    \(\langle f \rangle\)
    \\
    & &
    \(F_{32} = (v''_{13},v'_8,v''_{21},v_{12})\) &
    \((v''_{13}\ v''_{21})(v'_8)(v_{12})\) &
    \(\langle f \rangle\)
  \end{tabular}
  \]
  \caption{
    The cycle structure of the permutations of the vertices of a deltoidal hexecontahedron under each conjugacy class of \(I_h\).
  }
  \label{tabl:DeltoidalHexecontahedronConjugacy}
\end{table}

\begin{table}[ht]
  \[
  \begin{tabular}{P{1.7cm}P{1.6cm}lP{3.4cm}P{1.6cm}}
    conjugacy class  & cycle structure & non-maximal faces & vertex permutation generator & induced subgroup
    \\[3pt] \hline & & & & \\[-9pt]
    \multirow{4}{*}{\(\mathcal C^I_8\)} &
    \multirow{4}{*}{\((2^{28},1^4)\)} &
    \(F_{29} = (v_7,v'_7,v_{13})\) &
    \((v_7\ v_{13})(v'_7)\) &
    \(\langle f \rangle\)
    \\
    & &
    \(F_{30} = (v_7,v_{13},v'_5)\) &
    \((v_7\ v_{13})(v'_5)\) &
    \(\langle f \rangle\)
    \\
    & &
    \(F_{31} = (v_8,v'_6,v_{14})\) &
    \((v_8\ v_{14})(v'_6)\) &
    \(\langle f \rangle\)
    \\
    & &
    \(F_{32} = (v_8,v_{14},v'_8)\) &
    \((v_8\ v_{14})(v'_8)\) &
    \(\langle f \rangle\)
  \end{tabular}
  \]
  \caption{
    The cycle structure of the permutations of the vertices of a pentakis dodecahedron under each conjugacy class of \(I_h\).
  }
  \label{tabl:PentakisDodecahedronConjugacy}
\end{table}

\begin{corollary}
  \label{thm:triakisIcosahedronRotation}
  The number of ways of tiling the faces of the triakis icosahedron, pentakis dodecahedron, and deltoidal hexecontahedron up to rotational icosahedral symmetry \(I\) with \(\tid{}\) tiles is given by the expression
  \begin{align*}
    &\frac{1}{60}
    \left(
      \tid{60} +
      24\tid{12} +
      20\tid{20} +
      15\tid{30}
    \right).
  \end{align*}
\end{corollary}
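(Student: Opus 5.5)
The plan is to apply the orbit-counting theorem (\Cref{thm:orbitCountingTheorem}) with \(\mathcal A = I\), the rotational icosahedral group of order \(60\). Since \(I \le I_h\), the count of tilings fixed by each \(A \in I\) is exactly what was used in the proof of \Cref{thm:triakisIcosahedron}. The corollary should therefore follow by keeping only the first five conjugacy classes of \Cref{tabl:IcosahedralGroupConjugacyClasses}, namely \(C_1^I, C_2^I, C_{2'}^I, C_3^I, C_4^I\), and dividing by \(60\) instead of \(120\).

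The key step is to check that \(I\) acts freely on the \(60\) faces of each of the three polyhedra. I would argue by orbit–stabilizer. The faces form a single orbit of size \(60\) under \(I_h\), so the stabilizer of a face in \(I_h\) has order \(120/60 = 2\). By \Cref{tabl:TriakisIcosahedronConjugacy,tabl:DeltoidalHexecontahedronConjugacy,tabl:PentakisDodecahedronConjugacy}, the only elements fixing a face lie in \(C_8^I\), the reflections. Hence each face stabilizer is \(\{e, R\}\) for some reflection \(R\), which has determinant \(-1\), and its intersection with \(I\) is trivial.

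Consequently, for every \(A \in I\) the cyclic group \(\langle A \rangle\) acts freely on the faces. Each \(\langle A\rangle\)-orbit then has size \(|A|\), and the number of tilings fixed by \(A\) is \(\tid{60/|A|}\); no face-symmetry counts \(t_g\) with \(g \neq e\) enter. Reading the sizes and orders from \Cref{tabl:IcosahedralGroupConjugacyClasses}, the contributions are:
\begin{itemize}
  \item the identity gives \(\tid{60}\);
  \item the \(24\) elements of order \(5\) in \(C_2^I \cup C_{2'}^I\) give \(24\tid{12}\);
  \item the \(20\) elements of order \(3\) in \(C_3^I\) give \(20\tid{20}\);
  \item the \(15\) elements of order \(2\) in \(C_4^I\) give \(15\tid{30}\).
\end{itemize}
Summing these and dividing by \(|I| = 60\) yields the stated expression.

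The only real obstacle is justifying freeness without redoing the computer calculation. The orbit–stabilizer argument above handles this cleanly, using only face-transitivity, \(|I_h| = 120\), and the fact that the induced symmetry group on a face is \(\langle f \rangle\), realized by a reflection.
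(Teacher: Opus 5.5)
Your proposal is correct and follows the paper's route: the corollary comes from the proof of \Cref{thm:triakisIcosahedron} by keeping only the rotational classes \(C_1^I, C_2^I, C_{2'}^I, C_3^I, C_4^I\), on which \(\langle A\rangle\) acts freely, and dividing by \(60\). Your orbit--stabilizer argument (each face stabilizer in \(I_h\) has order \(2\) and is generated by a reflection, so it meets \(I\) trivially) is a conceptual justification of the freeness that the paper reads off its computer-generated tables.
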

\begin{oeis*}
  The number of \(n\)-colorings of the faces of the triakis icosahedron (equivalently pentakis dodecahedron and deltoidal hexecontahedron) up to the \(60\) symmetries of the icosahedral group \(I\) has been added to the OEIS as sequence \oeis{A378478} (\(0\)-indexed): \[
    0, 1, 19215358678900736, 706519304586988199183738259, \ldots,
  \]
  and the number of colorings up to the \(120\) symmetries of the icosahedral group \(I_h\) has been added to the OEIS as sequence \oeis{A378476} (\(0\)-indexed): \[
    0, 1, 9607679885269312, 353259652293727442874919719, \ldots.
  \]
\end{oeis*}

\subsection{Disdyakis triacontahedron}
The disdyakis triacontahedron, illustrated in Figure \ref{fig:disdyakisTriacontahedron}, is a Catalan solid with \(120\) scalene triangular faces that is the polyhedral dual of the truncated icosidodecahedron.
Its \(62\) vertices are generated by the orbits of \(v = (1,0,0)\), \(v' = c_1(1,1,1)\), and \(v'' = c_2(0,1,\phi)\) where
\(c_1 = \left(4-\sqrt{5}\right)/3 \approx 0.587977\) and
\(c_2 = \left(3 \sqrt{5}-1\right)/10 \approx 0.57082\), under the action of its isometry group, the full icosahedral group \(I_h\) of order \(120\).The orbits have size \(30\), \(20\) and \(12\) respectively.

\par\medskip\noindent
\begin{minipage}{\textwidth}\captionsetup{type=figure}
  \centering
    \begin{subfigure}[b]{0.35\textwidth}
      \centering
      \includegraphics[scale=0.5]{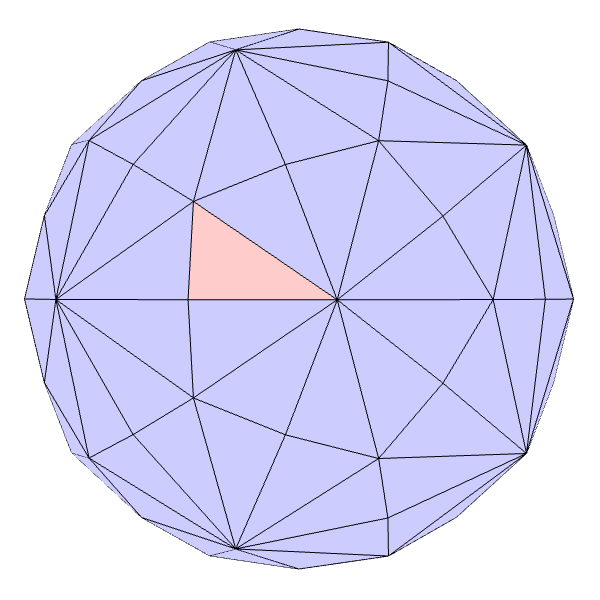}
      \caption{}
    \end{subfigure}
    \begin{subfigure}[b]{0.35\textwidth}
      \centering
      \includegraphics[scale=0.5]{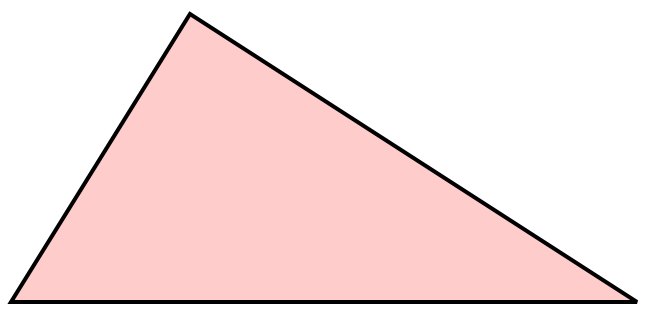}
      \caption{}
    \end{subfigure}
    \caption[Illustrations of the disdyakis triacontahedron and its faces.]{
      Illustrations of (a) the disdyakis triacontahedron with one face highlighted and (b) a face of the disdyakis triacontahedron.
    }
    \label{fig:disdyakisTriacontahedron}
\end{minipage}
\par\medskip

The induced symmetry group of the faces of the disdyakis triacontahedron is the trivial group.

\begin{theorem}
  \label{thm:disdyakisTriacontahedron}
  The number of ways of tiling the faces of the disdyakis triacontahedron up to full icosahedral symmetry \(I_h\) with \(\tid{}\) tiles is given by the expression
  \begin{align*}
    &\frac{1}{120}
    \left(
      \tid{120} +
      24\tid{24} +
      20\tid{40} +
      15\tid{60} +
      \tid{60} +
      24\tid{24} +
      20\tid{40} +
      15\tid{60}
    \right) \\
    &\qquad=\frac{1}{120} \left(
      \tid{120} +
      31\tid{60} +
      40\tid{40} +
      48\tid{24}
    \right).
  \end{align*}
\end{theorem}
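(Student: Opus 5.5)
The plan is to apply the orbit-counting theorem (\Cref{thm:orbitCountingTheorem}) exactly as in the proofs for the disdyakis dodecahedron and the tetrakis hexahedron. The key structural input is that the \(120\) scalene triangular faces of the disdyakis triacontahedron are the fundamental domains (chambers) of the reflection group \(I_h\). Concretely, I will first show that \(I_h\) acts simply transitively on the faces.

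Transitivity is part of the statement that the polyhedron is isohedral. For freeness, I will argue as follows. The induced symmetry group on a face is trivial, as noted just before the theorem, since the three vertices of each face lie in three different orbits \(v\), \(v'\), \(v''\) of sizes \(30\), \(20\), \(12\). So the stabilizer \(\mathcal A_F\) acts trivially on the vertices of \(F\). An isometry fixing three non-collinear points \(v\), \(v'\), \(v''\) together with the origin is the identity, because these three vectors span \(\mathbb{R}^3\). Hence \(\mathcal A_F = \{\mathrm I\}\) for every face \(F\), and \(|I_h| = 120\) equals the number of faces. I can double-check this against the vertex coordinates \(v=(1,0,0)\), \(v'=c_1(1,1,1)\), \(v''=c_2(0,1,\phi)\).

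Freeness of \(I_h\) implies freeness of every cyclic subgroup \(\langle A\rangle\). So for each \(A\) the faces split into \(120/|\langle A\rangle|\) orbits, each of full size. A tiling fixed by \(A\) is determined by one free choice of tile design per orbit, and no constraint from an induced face symmetry arises, because none is ever nontrivial. This gives \(|X^A| = \tid{120/|A|}\), where \(|A|\) is the order of \(A\). It also explains why, unlike in the earlier theorems, only \(\tid{}\) appears in the formula.

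Finally, I will assemble the sum class by class using \Cref{tabl:IcosahedralGroupConjugacyClasses}. For each of the ten conjugacy classes I take its size as a coefficient and the exponent \(120/|A|\) from the order of its elements. I then list the rotational classes \(C_1^I,\dots,C_4^I\) followed by the improper classes \(C_5^I,\dots,C_8^I\), in the same order as in \Cref{thm:dodecahedron,thm:icosahedron}, merge the classes \(C_2^I, C_{2'}^I\) and \(C_6^I, C_{6'}^I\) pairwise, and collect like powers of \(\tid{}\) to reach the displayed simplified form.

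The main obstacle is this last bookkeeping step, and specifically the orders of the improper classes. The order column of the table has to be read carefully: the central inversion has order \(2\), not \(1\), and the orders of \(C_6^I\), \(C_{6'}^I\), \(C_7^I\) fix the exponents of their terms. I would verify each order directly by computing powers of the representative matrices before matching the exponents and coefficients to the stated expression.
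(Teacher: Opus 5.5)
Your setup is the paper's own argument: the \(120\) faces are the chambers of \(I_h\), the action is free, and each \(A\in I_h\) fixes exactly \(\tid{120/|A|}\) tilings. Your vertex-orbit argument for freeness is in fact more explicit than what the paper gives. The gap is the last step, which you postpone and then assert will ``reach the displayed simplified form.'' It cannot.

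An improper isometry has \(\det A=-1\), so \(A^k=\mathrm I\) forces \(k\) to be even. Concretely, the improper elements are \(-g\) with \(g\in I\). So \(C_5^I\) and \(C_8^I\) have order \(2\), \(C_6^I\) and \(C_{6'}^I\) have order \(10\), and \(C_7^I\) has order \(6\); the table itself records orders \(10\), \(10\), \(6\) for these last three. The improper half of the sum is therefore \(\tid{60}+24\tid{12}+20\tid{20}+15\tid{60}\), and the orbit-counting theorem yields
\[
  \frac{1}{120}\left(\tid{120}+31\tid{60}+20\tid{40}+24\tid{24}+20\tid{20}+24\tid{12}\right).
\]
The printed terms \(24\tid{24}\) and \(20\tid{40}\) in the improper half would require improper elements of orders \(5\) and \(3\), which cannot exist. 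They are the rotational terms duplicated.

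Two independent checks show the error is in the statement, not in your method. First, substitute \(\hat t_{\id}=\tid{10}\), \(\hat t_r=\tid{2}\), \(\hat t_f=\tid{5}\) into \Cref{thm:dodecahedron}, as the paper's own example does. This gives \(24\hat t_r\hat t_{\id}=24\tid{12}\) and \(20\hat t_{\id}^{2}=20\tid{20}\). Second, at \(\tid{}=2\) the corrected formula gives \(11076899964874299238703297447907328\), exactly the value the paper lists for \oeis{A378477}. The printed formula exceeds it by \(\tfrac15\,2^{12}(2^{12}-1)+\tfrac16\,2^{20}(2^{20}-1)=183255117824\).

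So your proposal, as written, ends in a false claim of agreement. Carried out correctly, it proves the corrected expression above and shows that the displayed one is wrong.
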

\begin{proof}
  Because each of the \(120\) faces of the disdyakis triacontahedron corresponds to the fundamental domain of the full icosahedral group \(I_h\), the group action is free. Thus, for each \(A \in I_h\), the orbits of \(\langle A \rangle\) partition the faces into parts of size \(|A|\).
\end{proof}

\begin{corollary}
  The number of ways of tiling the faces of the disdyakis triacontahedron up to rotational icosahedral symmetry \(I\) with \(\tid{}\) tiles is given by the expression
  \[
    \frac{1}{60}
    \left(
      \tid{120} +
      24\tid{24} +
      20\tid{40} +
      15\tid{60} +
    \right).
  \]
\end{corollary}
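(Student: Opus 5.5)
Since the disdyakis triacontahedron's faces form a single free orbit under \(I_h\), the plan is to restrict to the rotational subgroup \(I \le I_h\) and apply the orbit-counting theorem (\Cref{thm:orbitCountingTheorem}) with \(\mathcal A = I\). The set \(X\) consists of all maps from the \(120\) faces to the \(\tid{}\) tile designs. The faces are exactly the fundamental domains of \(I_h\), so \(I_h\) acts simply transitively on them. Consequently the subgroup \(I\) acts freely on the faces, with two orbits of size \(60\) (the orientation-preserving and orientation-reversing fundamental triangles).

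Freeness means that for every \(A \in I\), the cyclic group \(\langle A \rangle\) acts freely. Hence every orbit of \(\langle A \rangle\) on the faces has size exactly \(|A|\), and no face is ever sent to itself with a nontrivial induced symmetry. This matches the fact that the induced symmetry group on a face is trivial. A tiling fixed by \(A\) is therefore determined by a free choice of one tile design per orbit, with no symmetry constraint on that design. This gives \(|X^A| = \tid{120/|A|}\), exactly as in the proof of \Cref{thm:disdyakisTriacontahedron}.

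Next I read the orders and class sizes of the five rotational classes off \Cref{tabl:IcosahedralGroupConjugacyClasses}:
\begin{itemize}
  \item the identity (\(1\) element, order \(1\)) contributes \(\tid{120}\);
  \item \(C_2^I\) and \(C_{2'}^I\) (\(24\) elements, order \(5\)) contribute \(24\tid{24}\);
  \item \(C_3^I\) (\(20\) elements, order \(3\)) contributes \(20\tid{40}\);
  \item \(C_4^I\) (\(15\) elements, order \(2\)) contributes \(15\tid{60}\).
\end{itemize}
Summing and dividing by \(|I| = 60\) gives the displayed expression. Equivalently, it is the first four terms of the sum in \Cref{thm:disdyakisTriacontahedron}, rescaled from \(1/120\) to \(1/60\). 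Note that the statement as printed has a stray trailing ``\(+\)'', which should simply be dropped.

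The only nonroutine point is justifying freeness of the \(I\)-action, i.e., that no rotation fixes a scalene face. I would argue this directly: a rotation fixing a face must permute its three vertices. These vertices lie in three distinct orbits \(v, v', v''\) of sizes \(30, 20, 12\), so each vertex is fixed individually. A rotation fixing three non-collinear points is the identity. As a check, I would verify the total on \(\tid{}=2\) and confirm it is an integer.
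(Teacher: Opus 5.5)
Your proposal is correct and follows the paper's route: the faces are fundamental domains of \(I_h\), so \(I\) acts freely and each \(A \in I\) fixes \(\tid{120/|A|}\) tilings. Summing over the five rotational conjugacy classes and dividing by \(60\) gives the formula (with the stray trailing ``\(+\)'' dropped), and your vertex-orbit argument makes explicit the freeness that the paper simply asserts via the fundamental-domain remark.
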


\begin{oeis*}
  The number of \(n\)-colorings of the faces of the disdyakis triacontahedron up to the \(60\) symmetries of the icosahedral group \(I\) can be derived from OEIS sequence \oeis{A378478} as \(A378478(n^2)\) (\(0\)-indexed): \[
    0, 1, 22153799929748598169960860333637632, \ldots,
  \]
  and the number of colorings up to the \(120\) symmetries of the icosahedral group \(I_h\) has been added to the OEIS as sequence \oeis{A378477} (\(0\)-indexed): \[
    0, 1, 11076899964874299238703297447907328, \ldots.
  \]
\end{oeis*}
\begin{example}
  Because the faces of the disdyakis triacontahedron represent the fundamental domains of icosahedral symmetry \(I_h\), we can recover \Cref{thm:disdyakisTriacontahedron} from \Cref{thm:dodecahedron,thm:icosahedron,thm:rhombicTriacontahedron,thm:triakisIcosahedron}, as illustrated in \Cref{fig:disdyakisTriacontahedronPartitions}.

  We can partition the pentagonal faces of the dodecahedron into ten right triangles, each with \(t_{\id}\) possible tile designs so that, \[
    \hat t_{\id} = t_{\id}^{10}, \qquad
    \hat t_{r} = t_{\id}^2, \qquad
    \hat t_{f} = t_{\id}^5.
  \]

  We can partition the equilateral triangular faces of the icosahedron into six right triangles, each with \(t_{\id}\) possible tile designs so that, \[
    \hat t_{\id} = t_{\id}^6, \qquad
    \hat t_{r} = t_{\id}^2, \qquad
    \hat t_{f} = t_{\id}^3.
  \]

  We can partition the rhombic faces of the rhombic triacontahedron into four right triangles, each with \(t_{\id}\) possible tile designs so that, \[
    \hat t_{\id} = t_{\id}^4, \qquad
    \hat t_{r} = t_{\id}^2, \qquad
    \hat t_{f} = t_{\id}^2, \qquad
    \hat t_{rf} = t_{\id}^2.
  \]

  We can partition the obtuse triangular (or kite or acute triangular)  faces of the triakis icosahedron (respectively deltoidal hexecontahedron or pentakis dodecahedron) into two right triangles, each with \(t_{\id}\) possible tile designs so that, \[
    \hat t_{\id} = t_{\id}^2, \qquad
    \hat t_{f} = t_{\id}.
  \]

  Substituting these values into \Cref{thm:dodecahedron,thm:icosahedron,thm:rhombicTriacontahedron,thm:triakisIcosahedron} respectively recovers \Cref{thm:disdyakisTriacontahedron}.
\end{example}
\begin{figure}
  \centering
  \begin{minipage}{0.64\textwidth}
    \begin{subfigure}[b]{0.32\textwidth}
      \centering
      \includegraphics[scale=0.38]{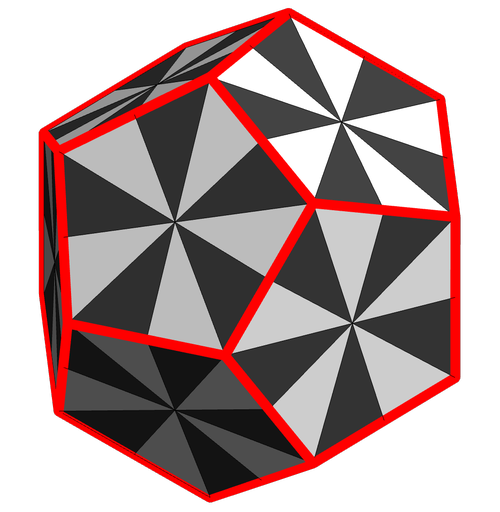}
      \caption{}
    \end{subfigure}
    \begin{subfigure}[b]{0.32\textwidth}
      \centering
      \includegraphics[scale=0.38]{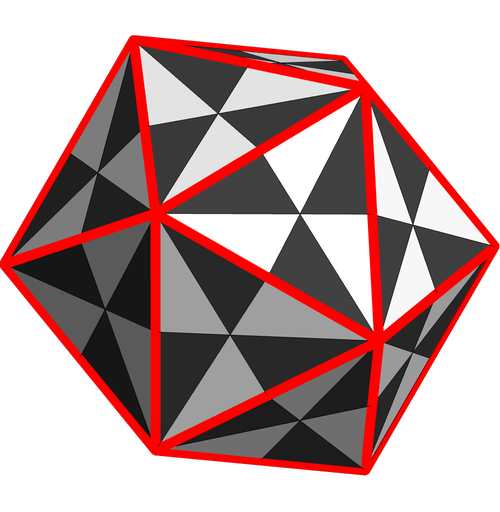}
      \caption{}
    \end{subfigure}
    \begin{subfigure}[b]{0.32\textwidth}
      \centering
      \includegraphics[scale=0.38]{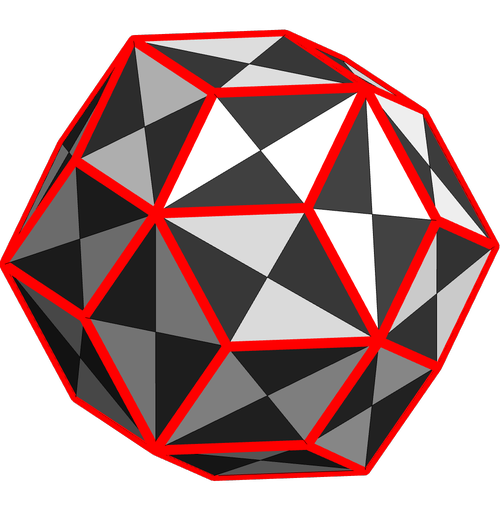}
      \caption{}
    \end{subfigure}
    \\
    \begin{subfigure}[b]{0.32\textwidth}
      \centering
      \includegraphics[scale=0.38]{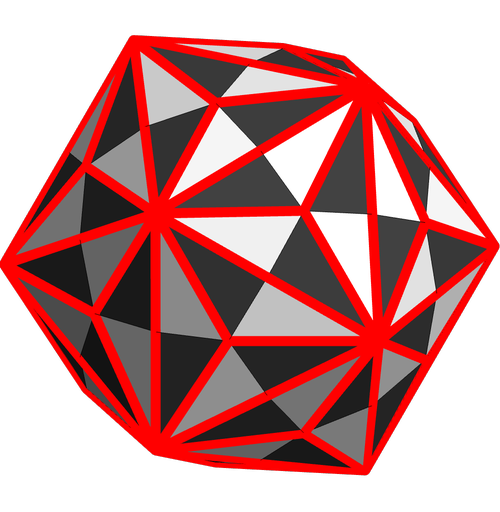}
      \caption{}
    \end{subfigure}
    \begin{subfigure}[b]{0.32\textwidth}
      \centering
      \includegraphics[scale=0.38]{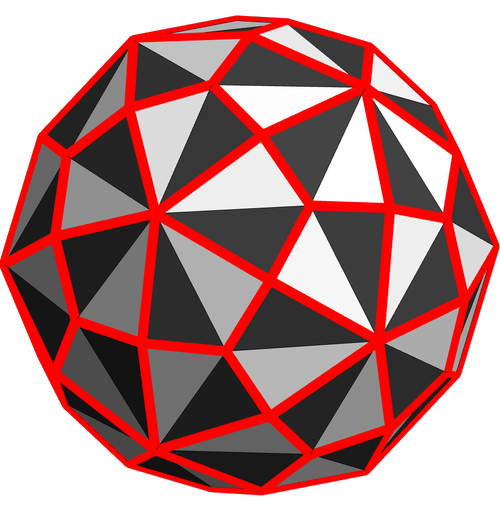}
      \caption{}
    \end{subfigure}
    \begin{subfigure}[b]{0.32\textwidth}
      \centering
      \includegraphics[scale=0.38]{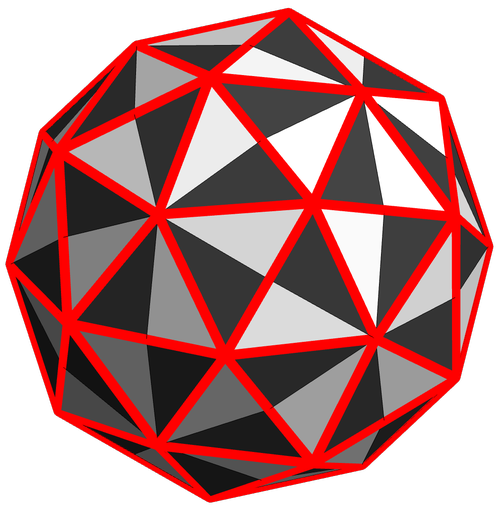}
      \caption{}
    \end{subfigure}
  \end{minipage}
  \begin{minipage}{0.35\textwidth}
    \begin{subfigure}[b]{\linewidth}
      \centering
      \includegraphics[width=\linewidth]{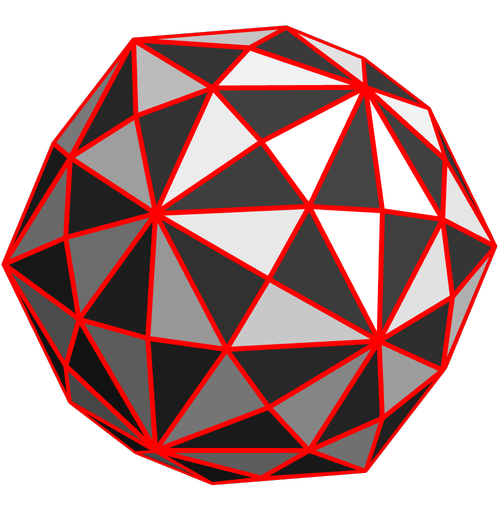}
      \caption{}
    \end{subfigure}
  \end{minipage}
  \caption{The faces of the (a) dodecahedron, (b) icosahedron, (c) rhombic triacontahedron, (d) triakis icosahedron, (e) deltoidal hexecontahedron, and (f) pentakis dodecahedron can be partitioned into right triangles that correspond to the faces of the (g) disdyakis triacontahedron.}
  \label{fig:disdyakisTriacontahedronPartitions}
\end{figure}

\subsection{Pentagonal hexecontahedron}
The pentagonal hexecontahedron, illustrated in \Cref{fig:pentagonalHexecontahedron}, is a Catalan solid with \(60\) (non-regular) pentagonal faces and is the polyhedral dual of the snub dodecahedron.
Its \(92\) vertices are generated by the orbits of
\(v=(1,1,1)\) and \(v'=\alpha(0,1,\phi)\), and \(v''=(\beta_1,\beta_2,\beta_3)\) under the rotational icosahedral group \(I\), where
\(\alpha \approx 0.954802\) is a root of \(31 x^6-37 x^5-42 x^4+47 x^3+2 x^2-x-1\),
\(\beta_1 \approx 1.71556\) is a root of \(x^6-4 x^4-x^3+4 x^2+2 x-1\),
\(\beta_2 \approx 0.178737\) is a root of \(x^6+x^5+12 x^4-14 x^3-18 x^2-2 x+1\), and
\(\beta_3 \approx 0.157803\) is a root of \(x^6+9 x^5+24 x^4+12 x^3-11 x^2-5 x+1\) under the action of its isometry group, the rotational icosahedral group \(I\) of order \(60\). The orbits have size \(20\), \(12\), and \(60\) respectively.

\par\medskip\noindent
\begin{minipage}{\textwidth}\captionsetup{type=figure}
  \centering
    \begin{subfigure}[b]{0.35\textwidth}
      \centering
      \includegraphics[scale=0.5]{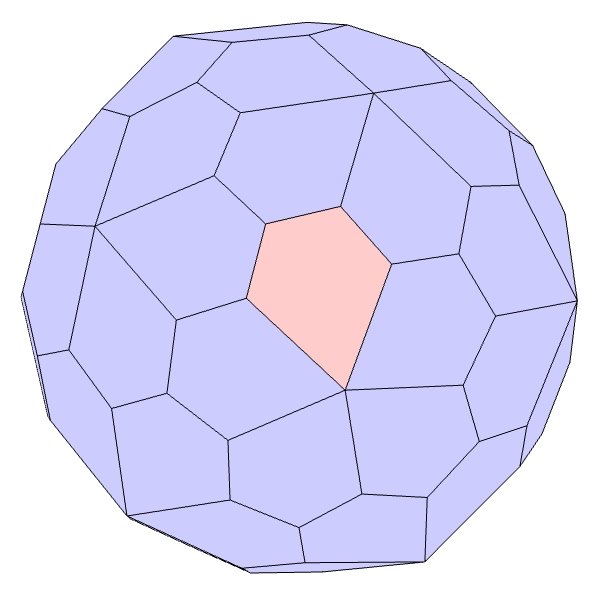}
      \caption{}
    \end{subfigure}
    \begin{subfigure}[b]{0.35\textwidth}
      \centering
      \includegraphics[scale=0.5]{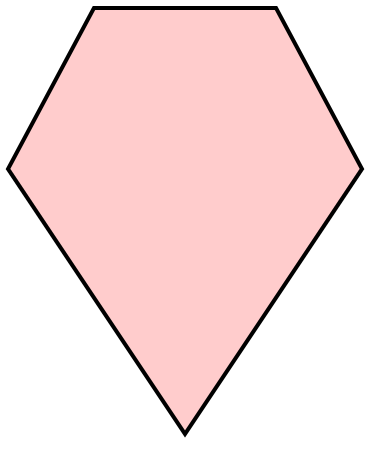}
      \caption{}
    \end{subfigure}
    \caption[Illustrations of the pentagonal hexecontahedron and its faces.]{
      Illustrations of (a) the pentagonal hexecontahedron with one face highlighted and (b) a face of the pentagonal hexecontahedron.
    }
    \label{fig:pentagonalHexecontahedron}
\end{minipage}
\par\medskip

Note that \(|v| = |v''| = \sqrt{3}\), and \(|v'| \approx 1.81614\) has algebraic degree \(12\).

The induced symmetry group of the faces of the pentagonal hexecontahedron is the trivial group.

\begin{theorem}
  \label{thm:pentagonalHexecontahedron}
  The number of ways of tiling the faces of the pentagonal hexecontahedron up to rotational icosahedral symmetry \(I\) with \(\tid{}\) tiles is given by the expression
  \begin{align*}
    &\frac{1}{60}
    \left(
      \tid{60} +
      24\tid{12} +
      20\tid{20} +
      15\tid{30}
    \right).
  \end{align*}
\end{theorem}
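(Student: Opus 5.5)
The plan is to apply the orbit-counting theorem (\Cref{thm:orbitCountingTheorem}) to the set \(X\) of all face tilings of the pentagonal hexecontahedron under the rotational icosahedral group \(I\), just as in the proof for the pentagonal icositetrahedron. The key structural fact is that the \(60\) faces form a single free orbit of \(I\). Since \(|I| = 60\) and the polyhedron is face-transitive, the stabilizer of any face has order \(60/60 = 1\). Equivalently, each face is a fundamental domain of \(I\); this is consistent with the stated fact that the induced symmetry group on each face is trivial.

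Because the action of \(I\) on faces is free, for every \(A \in I\) the cyclic subgroup \(\langle A\rangle\) also acts freely. So \(\langle A \rangle\) partitions the \(60\) faces into \(60/|A|\) orbits, each of size \(|A|\). A tiling is fixed by \(A\) exactly when it is constant along each such orbit, in the sense that the design on a face is carried to the design on its image. No face is ever returned to itself by a nontrivial element, so no constraint of the form "tile fixed under an induced symmetry" ever arises. Hence the tiling is determined by a free choice among the \(\tid{}\) designs on one face of each orbit, and \(|X^A| = \tid{60/|A|}\).

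The remaining step is bookkeeping over the conjugacy classes of \(I\) from \Cref{tabl:IcosahedralGroupConjugacyClasses}:
\begin{itemize}
  \item the identity (size \(1\), order \(1\)) contributes \(\tid{60}\);
  \item the two classes of \(5\)-fold rotations \(C_2^I, C_{2'}^I\) (total size \(24\), order \(5\)) contribute \(24\tid{12}\);
  \item the \(120^\circ\) rotations \(C_3^I\) (size \(20\), order \(3\)) contribute \(20\tid{20}\);
  \item the \(180^\circ\) rotations \(C_4^I\) (size \(15\), order \(2\)) contribute \(15\tid{30}\).
\end{itemize}
Summing these and dividing by \(|I| = 60\) yields the displayed expression.

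The only genuine point to justify is freeness. Transitivity holds because the pentagonal hexecontahedron is the dual of the snub dodecahedron, whose \(60\) vertices form a single \(I\)-orbit. Each face, being a non-regular pentagon with no rotational symmetry, has trivial induced stabilizer, and \(60 = |I|\) faces leaves no room for a nontrivial stabilizer. If one wants a computational check instead, the explicit vertex orbits \(v, v', v''\) can be used: the \(60\) generic vertices \(v''\) lie one per face and are permuted freely by \(I\), which forces the face action to be free.
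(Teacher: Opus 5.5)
Your proof is correct and takes the same approach as the paper. The $60$ faces form a single free $I$-orbit, which you justify by orbit--stabilizer where the paper simply cites the fundamental-domain remark. So each $A \in I$ fixes exactly $\tid{60/|A|}$ tilings, and summing over the conjugacy classes of $I$ gives the formula.

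One small factual slip occurs in your optional ``computational check''. Each face actually has one $v$, one $v'$, and three $v''$ vertices, and each $v''$ lies on three faces, rather than one $v''$ per face. Your main freeness argument does not depend on this.
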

\begin{proof}
  Because each of the \(60\) faces of the pentagonal hexecontahedron corresponds to the fundamental domain of the rotational icosahedral group \(I\), the group action is free. Thus, for each \(A \in I\), the orbits of \(\langle A \rangle\) partition the faces into parts of size \(|A|\).
\end{proof}
\begin{oeis*}
  The number of \(n\)-colorings of the faces of the pentagonal hexecontahedron up to the \(60\) symmetries of the rotational icosahedral group \(h\) appears in the OEIS as sequence \oeis{A378478} (\(0\)-indexed): \[
    0, 1, 19215358678900736, 706519304586988199183738259, \ldots.
  \]
\end{oeis*}

\section{Polyhedra with dihedral symmetry}\label{sec:dihedral}
The bipyramids and trapezohedra consist of two infinite families of isohedral (face transitive) polyhedra. Here we count the tilings of these polyhedra via a bijection to tilings of cylindrical grids up to cyclic shifting and reflections of the grid \cite{Ethier,KageyKeehn}.
\subsection{Bipyramids}
Bipyramids are an infinite family of polyhedra, examples of which are shown in \Cref{fig:Bipyramids}.
The \(n\)-bipyramid is a face-transitive polyhedron with \(2n\) isosceles triangular faces and is the polyhedral dual of the \(n\)-prism.
The symmetry group of the \(n\)-bipyramid is the prismatic symmetry group \(D_{nh}\) of order \(4n\).

\par\medskip\noindent
\begin{minipage}{\textwidth}\captionsetup{type=figure}
  \centering
  \begin{subfigure}[b]{0.27\textwidth}
    \centering
    \includegraphics[scale=0.4]{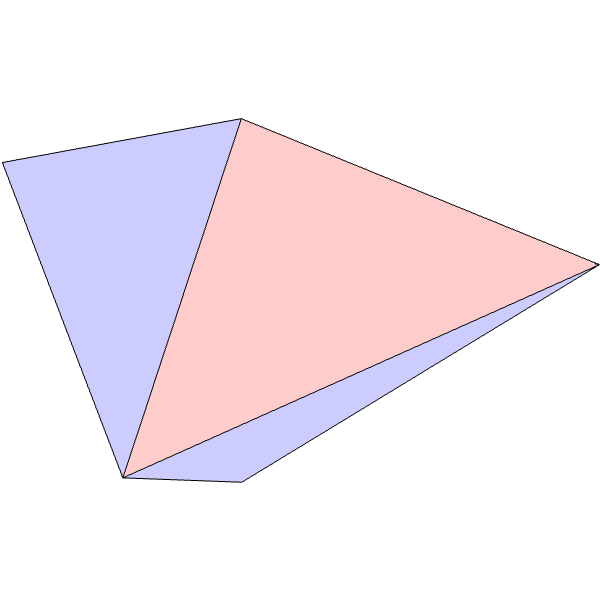}
    \caption{}\label{subfig:Bipyramid3}
  \end{subfigure}
  \begin{subfigure}[b]{0.3\textwidth}
    \centering
    \includegraphics[scale=0.4]{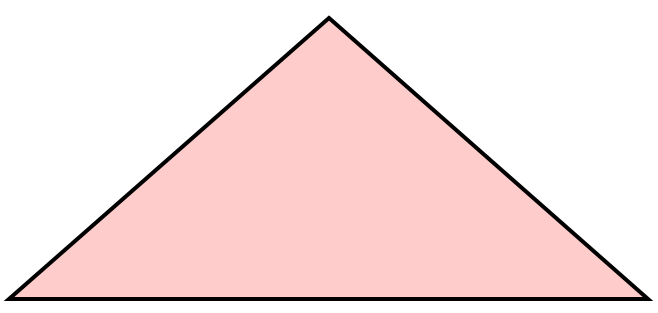}
    \caption{}\label{subfig:Bipyramid3Face}
  \end{subfigure}
    \begin{subfigure}[b]{0.16\textwidth}
    \centering
    \includegraphics[scale=0.4]{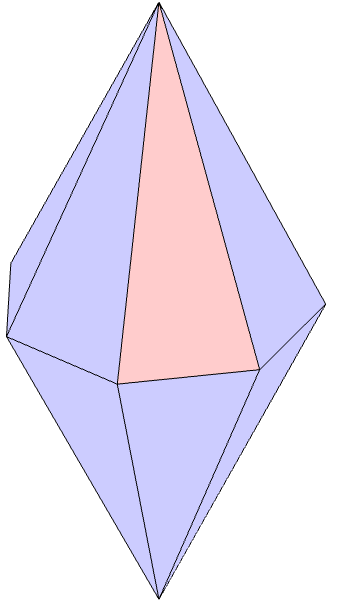}
    \caption{}\label{subfig:Bipyramid7}
  \end{subfigure}
  \begin{subfigure}[b]{0.14\textwidth}
    \centering
    \includegraphics[scale=0.4]{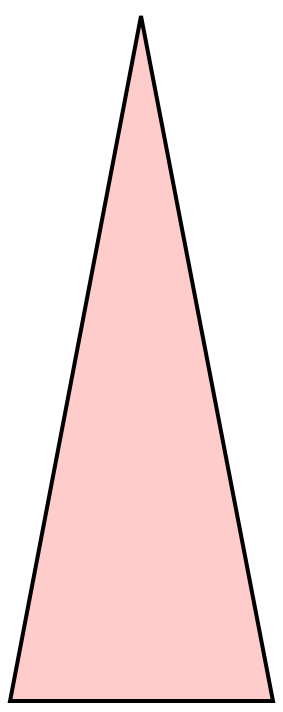}
    \caption{}\label{subfig:Bipyramid7Face}
  \end{subfigure}
  \caption[Illustrations of the trapezohedra and bipyramids and their faces.]{
    Illustrations of
    (a) the \(3\)-bipyramid with \(6\) faces,
    (b) a face of the \(3\)-bipyramid,
    (c) the \(7\)-bipyramid with \(14\) faces,and
    (d) a face of the \(7\)-bipyramid.
  }
  \label{fig:Bipyramids}
\end{minipage}
\par\medskip
We can describe the vertices of a bipyramid with poles at \((0,0,\pm z)\) and vertices along the equator at \(
    \left(
    \cos\!\left(\frac{2k}{n}\pi\right),
    \sin\!\left(\frac{2k}{n}\pi\right),
    0
    \right).
\)
When \(z = \cot(\pi/n)\), all dihedral angles of the \(n\)-bipyramid are equal.

\begin{theorem}
  For \(n \geq 3\), the number of ways of tiling the \(n\)-bipyramid up to \(D_{nh}\) symmetry with \(\tid{}\) isosceles triangular designs, \(t_f\) of which are fixed under reflection, is given by the following expressions. When \(n\) is even, \[
    \frac{1}{4n}\left(
      n\tid{n} +
      n\left(\frac{1}{2}\tid{n} + \frac{1}{2}\tid{n-2}\tf{4}\right) +
      \sum_{d|n}\left(
        \phi(d) \tid{2n/d} +
        \phi(d) \tid{2n/\lcm(d,2)}
      \right)
    \right),
  \] and when \(n\) is odd, \[
    \frac{1}{4n}\left(
      n\tid{n} +
      n\tid{n-1}\tf{2} +
      \sum_{d|n}\left(
        \phi(d) \tid{2n/d} +
        \phi(d) \tid{2n/\lcm(d,2)}
      \right)
    \right),
  \]
\end{theorem}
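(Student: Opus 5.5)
I would apply the orbit-counting theorem (\Cref{thm:orbitCountingTheorem}) directly to the group \(D_{nh}\) of order \(4n\) acting on the \(2n\) faces. The \(n\)-bipyramid has faces \(U_k\) (upper) and \(L_k\) (lower), \(k \in \mathbb{Z}/n\), with \(U_k\) and \(L_k\) sharing the equatorial edge from \(e_k\) to \(e_{k+1}\). This is exactly a \(2 \times n\) cylindrical grid.

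The \(4n\) elements of \(D_{nh}\) split into four families, and for each element \(A\) I would count the fixed tilings \(|X^A|\) from the cycle structure of \(A\) on faces. Whenever some power \(A^m\) stabilizes a face, I would also record the induced symmetry on that face. For an isosceles face the induced group is \(\langle f\rangle\), where \(f\) swaps the two equatorial vertices.
\begin{itemize}
  \item \textbf{Rotations} \(\rho^k\) about the polar axis. The \(\langle \rho^k\rangle\)-orbits on \(\{U_k\}\) and on \(\{L_k\}\) each have size \(d = n/\gcd(n,k)\), and the action is free. Grouping by \(d \mid n\), with \(\phi(d)\) values of \(k\) for each \(d\), gives \(\sum_{d\mid n}\phi(d)\,\tid{2n/d}\).
  \item \textbf{Rotoreflections} \(\sigma_h\rho^k\). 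The element \(\sigma_h\rho^k\) maps \(U_j \mapsto L_{j+k}\), and its order is \(\lcm(d,2)\). It never fixes a face, because it exchanges the two hemispheres. A power \((\sigma_h\rho^k)^m\) can stabilize a face only when \(m\) is even and \(\rho^{km}\) is trivial, and then that power is the identity. So the action is free on \(2n\) faces with orbits of size \(\lcm(d,2)\), giving \(\sum_{d\mid n}\phi(d)\,\tid{2n/\lcm(d,2)}\).
  \item \textbf{Vertical mirrors} (\(n\) of them). Such a mirror fixes the polar axis, so it preserves each hemisphere.
  \item \textbf{Horizontal \(C_2\) axes} (\(n\) of them). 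Each of these \(180^\circ\) rotations swaps the two hemispheres.
\end{itemize}

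For the horizontal \(C_2\) axes, the axis passes either through an equatorial vertex or through the midpoint of an equatorial edge. In both cases no face is fixed setwise. In the edge-midpoint case, the faces \(U_k\) and \(L_k\) on that edge are swapped with each other. So every orbit has size \(2\), contributing \(n\,\tid{n}\). This matches the leading term in both formulas.

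The vertical mirrors need a parity split.

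When \(n\) is odd, each of the \(n\) mirrors passes through one equatorial vertex and the midpoint of the opposite edge. The face on the mirror's edge in each hemisphere is fixed, and it is reflected by \(f\). That gives \(2\) fixed faces, while the other \(2n-2\) faces pair up. The count is \(\tid{n-1}\tf{2}\) per mirror, for a total of \(n\,\tid{n-1}\tf{2}\).

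When \(n\) is even, there are two kinds of mirror.
\begin{itemize}
  \item \(n/2\) mirrors pass through two opposite equatorial vertices. They fix no face and give \(\tid{n}\) each.
  \item \(n/2\) mirrors pass through two opposite edge midpoints. Each fixes four faces (\(U\) and \(L\) on both edges), each reflected by \(f\). They give \(\tid{n-2}\tf{4}\) each.
\end{itemize}
Together these produce the term \(n\bigl(\tfrac12\tid{n}+\tfrac12\tid{n-2}\tf4\bigr)\).

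The main obstacle is confirming that the induced symmetry on each stabilized face really is \(f\) and is well defined. For this I would check that the stabilizing element swaps the two equatorial vertices of the face and fixes its apex. I would also verify that for \(n\ge 3\) the only symmetries are those of \(D_{nh}\). When \(n=4\) and \(z\) is chosen to give a regular octahedron, the symmetry group is larger, but the count here is only up to \(D_{nh}\).

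Summing the four contributions and dividing by \(4n\) gives the stated expressions.
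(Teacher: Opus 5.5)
Your proposal is correct, but it reaches the formula by a different route from the paper.

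\textbf{The paper's route.} The proof there is a one-line reduction. It identifies the \(2n\) faces with the cells of an \(n \times 2\) cylindrical grid. It observes that \(D_{nh}\) acts on this grid by cyclic column shifts together with the horizontal and vertical reflections. It then cites the cylindrical-grid enumeration (Theorem 3.1.6) from the authors' earlier paper.

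\textbf{Your route.} You apply the orbit-counting theorem directly to the four families of elements of \(D_{nh}\): axial rotations, rotoreflections \(\sigma_h\rho^k\), vertical mirrors, and horizontal half-turns. This is the same element-by-element analysis of face orbits and induced face symmetries that the paper uses for the Platonic and Catalan solids, here carried out for the dihedral family.

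\textbf{What each buys.} Your version is self-contained and shows where each summand comes from:
\begin{itemize}
  \item the leading \(n\tid{n}\) comes from the \(n\) horizontal half-turns;
  \item the first divisor sum comes from the axial rotations;
  \item the second divisor sum comes from the rotoreflections, with \(\sigma_h\) itself as its \(d=1\) term;
  \item every factor of \(\tf{}\) comes from the vertical mirrors.
\end{itemize}
It also explains the parity split geometrically: for odd \(n\) every mirror runs from a vertex to an opposite edge midpoint, while for even \(n\) the mirrors split evenly into vertex--vertex and edge--edge mirrors. Finally, it checks directly that the induced symmetry on each stabilized face is \(f\). The paper's bijection leaves this implicit, since it requires matching the triangle's reflection with the appropriate reflection of a grid cell.

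The paper's version buys brevity and a conceptual link to the cylindrical-grid enumerations (e.g., Ethier's count), at the cost of deferring all of the actual case analysis to another paper.

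One minor remark: you do not need to verify that \(D_{nh}\) is the full symmetry group for \(n \ge 3\). The count is defined relative to \(D_{nh}\), as you yourself note for the regular octahedron.
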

\begin{proof}
  The faces of the \(n\)-bipyramid are in bijection with squares of a \(n \times 2\) cylindrical grid, and the prismatic dihedral group \(D_{nh}\) acts on the cylindrical grid by cyclically shifting columns along with horizontal and vertical reflection. Therefore, the tilings of the bipyramid are in bijective correspondence with the tilings of the \(n \times 2\) cylindrical grid up to rotation and reflection, which is computed in Theorem 3.1.6 of our previous paper \cite{KageyKeehn}.
\end{proof}

\begin{oeis*}
  The number of \(c\)-colorings of the \(n\)-bipyramid up to prismatic symmetry (i.e., \(\tid{}=\tf{}=c\)) has been added to the OEIS as sequence \oeis{A395240}.
  \[
    \begin{array}{c|rrrrrrrr}
      & c=1 & c=2 & c=3 & c=4 & c=5 & c=6 & c=7 & c=8 \\ \hline
      n=3 & 1 & 13 & 92 & 430 & 1505 & 4291 & 10528 & 23052 \\
      n=4 & 1 & 34 & 549 & 4756 & 26725 & 111546 & 376369 & 1083664 \\
      n=5 & 1 & 78 & 3210 & 53764 & 493131 & 3037314 & 14158228 & 53762472 \\
      n=6 & 1 & 237 & 23337 & 709316 & 10229225 & 90932661 & 577499937 & 2865540112
    \end{array}
  \]
  When \(c = 2\), this recovers an enumeration by Ethier of the number of toroidal \(n \times 2\) binary arrays, up to rotation and reflection \cite{Ethier}, which appears as OEIS sequence \oeis{A222187}.
\end{oeis*}
\subsection{Trapezohedra}
Trapezohedra are an infinite family of polyhedra, examples of which are shown in \Cref{fig:Trapezohedra}. The \(n\)-trapezohedron is a face-transitive polyhedron with \(2n\) deltoidal faces, and is the polyhedral dual of the \(n\)-antiprism. The symmetry group of the \(n\)-trapezohedron is the antiprismatic symmetry group \(D_{nd}\) of order \(4n\).

\par\medskip\noindent
\begin{minipage}{\textwidth}\captionsetup{type=figure}
  \centering
  \begin{subfigure}[b]{0.22\textwidth}
    \centering
    \includegraphics[scale=0.4]{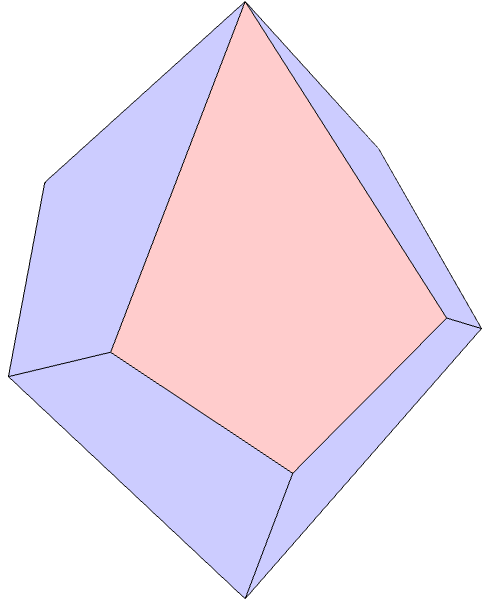}
    \caption{}\label{subfig:Trapezohedron4}
  \end{subfigure}
  \begin{subfigure}[b]{0.22\textwidth}
    \centering
    \includegraphics[scale=0.4]{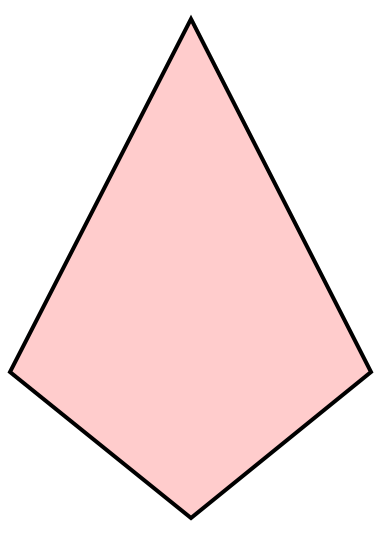}
    \caption{}\label{subfig:Trapezohedron4Face}
  \end{subfigure}
  \begin{subfigure}[b]{0.18\textwidth}
    \centering
    \includegraphics[scale=0.4]{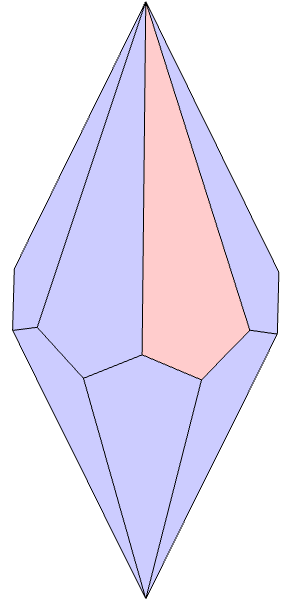}
    \caption{}\label{subfig:Trapezohedron7}
  \end{subfigure}
  \begin{subfigure}[b]{0.18\textwidth}
    \centering
    \includegraphics[scale=0.4]{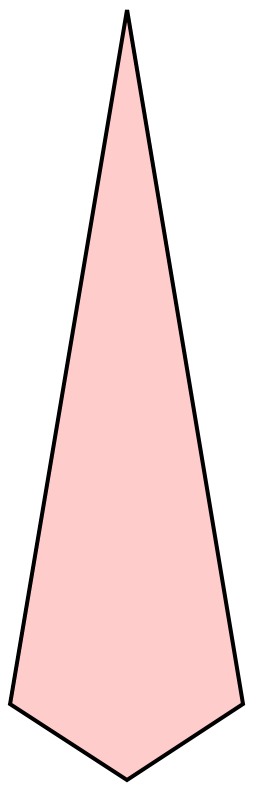}
    \caption{}\label{subfig:Trapezohedron7Face}
  \end{subfigure}
  \caption[Illustrations of the trapezohedra and bipyramids and their faces.]{
    Illustrations of
    (a) the \(4\)-trapezohedron with \(8\) faces,
    (c) a face of the \(4\)-trapezohedron,
    (b) the \(7\)-trapezohedron with \(14\) faces, and
    (d) a face of the \(7\)-trapezohedron.
  }
  \label{fig:Trapezohedra}
\end{minipage}
\par\medskip
We can describe the vertices of a \(n\)-trapezohedron with a north and south pole of \((0,0,z)\) and \((0,0,-z)\) respectively, and near-equator vertices at \[
    \left(
    \cos\!\left(\frac{2k}{n}\pi\right),
    \sin\!\left(\frac{2k}{n}\pi\right),
    z'
    \right)
    \quad\text{and}\quad
    \left(
    \cos\!\left(\frac{2k + 1}{n}\pi\right),
    \sin\!\left(\frac{2k + 1}{n}\pi\right),
    -z'
    \right)
\] for all \(0 \leq k < n\), where \(z\) and \(z'\) are chosen so that the convex hull forms an \(n\)-trapezohedron. To make all dihedral angles equal (as is the convention for Catalan solids), we can choose \[
  z = \frac{\cos(\pi/n)+1}{\sqrt{2\cos(\pi/n)-2\cos(2\pi/n)}}
  \quad \text{and} \quad
  z' = \frac{\sqrt{1-\cos(\pi/n)}}{\sqrt{4 \cos(\pi/n)+2}}.
\]

We can give a matrix representation of the antiprismatic group \[
  D_{nd} \cong D_{2n} = \langle R, F \mid R^{2n} = F^2 = (RF)^2 = \mathrm{Id}\rangle
\] by choosing \(F\) to be a mirror symmetry and \(R\) to be a rotoreflection: \[
  F = \begin{bmatrix}
    1 & 0 & 0 \\
    0 & -1 & 0 \\
    0 & 0 & 1
  \end{bmatrix}
  \quad \text{and} \quad
  R = \begin{bmatrix}
    \cos (\pi/n) & -\sin (\pi/n) & 0 \\
    \sin (\pi/n) & \cos (\pi/n) & 0 \\
    0 & 0 & -1 \\
  \end{bmatrix}.
\]

\begin{theorem}
  For \(n \geq 3\), the number of ways of tiling the \(n\)-trapezohedron up to \(D_{nd}\) (antiprismatic symmetry) with \(\tid{}\) kite tile designs, \(t_f\) of which are fixed under reflection, is given by the expression \[
    \frac{1}{4}\left(\tid{n} + \tid{n-1}\tf{2}\right)
    +
    \frac{1}{4n}\sum_{d|2n}\phi(d)\tid{2n/d}.
  \]
\end{theorem}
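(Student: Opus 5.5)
The plan is to apply the orbit-counting theorem (\Cref{thm:orbitCountingTheorem}) directly to the action of \(D_{nd}\) on the \(2n\) kite faces. I will use the matrix generators \(R\) and \(F\) given above and organise the \(4n\) group elements into the cyclic part \(\langle R\rangle\) and the \(2n\) elements \(R^jF\).

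The first step is to set up the combinatorics of the faces. Label the upper kites \(U_0,\dots,U_{n-1}\): \(U_k\) has the north pole and the upper vertex at angle \(2k\pi/n\) on its symmetry axis. Label the lower kites \(L_0,\dots,L_{n-1}\): \(L_k\) has the south pole and the lower vertex at angle \((2k+1)\pi/n\) on its axis. The rotoreflection \(R\) rotates by \(\pi/n\) and swaps the hemispheres, so it sends \(U_k \mapsto L_k \mapsto U_{k+1}\). Thus, interleaving the two families, the faces are the vertices of a \(2n\)-gon on which \(\langle R\rangle \cong C_{2n}\) acts by cyclic shift.

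The second step handles the cyclic part. Since \(\langle R\rangle\) acts freely on the faces, no face is mapped to itself with a nontrivial induced symmetry. Hence \(R^j\) fixes exactly \(\tid{\gcd(j,2n)}\) tilings. Grouping the exponents \(j\) by \(d = 2n/\gcd(j,2n)\) gives \(\sum_{d\mid 2n}\phi(d)\tid{2n/d}\), which after dividing by \(4n\) is the second term of the formula.

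The third step handles the elements \(R^jF\), which split by determinant. For \(j\) even, \(R^jF\) is a reflection in a vertical plane containing the polar axis. For \(j\) odd, \(R^jF\) is a \(180^\circ\) rotation about a horizontal axis.
\begin{itemize}
\item A vertical mirror at angle \(2k\pi/n\) contains the axis of \(U_k\). It also contains the opposite direction \(2k\pi/n+\pi\), which is the axis of a lower kite when \(n\) is odd and of an upper kite when \(n\) is even. Either way it fixes exactly two faces, each reflected across its symmetry axis (induced symmetry \(f\)). It pairs off the remaining \(2n-2\) faces, contributing \(\tid{n-1}\tf{2}\).
\item A horizontal \(C_2\) axis passes through the equator. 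It swaps the hemispheres, so it fixes no face and pairs all \(2n\) faces, contributing \(\tid{n}\).
\end{itemize}
There are \(n\) elements of each type, giving \(\frac{1}{4n}\bigl(n\tid{n}+n\tid{n-1}\tf{2}\bigr)=\frac14\bigl(\tid{n}+\tid{n-1}\tf{2}\bigr)\). Together with the second step this is exactly the stated formula.

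The main obstacle is the third step: confirming that each vertical mirror fixes exactly two faces, for both parities of \(n\), and that the induced symmetry is the kite's reflection rather than the identity. I would check this first from the explicit coordinates: the mirror \(F\) (the plane \(y=0\)) fixes \(U_0\), whose axis vertex is \((1,0,z')\). It also fixes the face whose axis lies at angle \(\pi\): this is \(L_{(n-1)/2}\) when \(n\) is odd and \(U_{n/2}\) when \(n\) is even. It swaps the two side vertices of each fixed kite. I would then transport this to the other mirrors by conjugation with \(R\).
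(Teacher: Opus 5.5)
Your argument is correct, but it takes a different route from the paper. The paper does no orbit counting for the trapezohedron. It asserts a natural bijection between the \(2n\) kites and the cells of a \(2n\times 1\) cylindrical grid, under which \(D_{nd}\cong D_{2n}\) becomes the dihedral action on the grid, and then cites Theorem 3.1.6 of the authors' earlier paper. You instead apply the orbit-counting theorem directly to the elements \(R^j\) and \(R^jF\). In doing so you prove the content of that bijection, which the paper only illustrates with a figure:
\begin{itemize}
\item \(R\) acts as a \(2n\)-cycle on the faces;
\item the \(n\) vertical mirrors are the reflections of the \(2n\)-cycle through two opposite cells, which is the only place \(t_f\) enters;
\item the \(n\) horizontal half-turns are the reflections through two opposite gaps, and they fix no face.
\end{itemize}
Your version is self-contained and shows why only \(t_{\mathrm{id}}\) and \(t_f\) can appear. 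The paper's version is shorter and reuses a single cylinder formula for both bipyramids and trapezohedra.

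One slip in your coordinate check does not affect any count. In the paper's coordinates an upper kite is \((N, h_k, l_k, h_{k+1})\). Its side vertices \(h_k, h_{k+1}\) lie on the upper ring, and its axis runs from \(N\) to the lower-ring vertex \(l_k\) at angle \((2k+1)\pi/n\).
\begin{itemize}
\item Since \(N\) and \((1,0,z')\) are joined by an edge, they never lie on a common kite axis.
\item The face that \(F\) fixes through \((1,0,z')\) is therefore the lower kite \((S, l_{n-1}, h_0, l_0)\).
\item The second fixed face, whose axis is at angle \(\pi\), is an upper kite when \(n\) is odd and a lower kite when \(n\) is even.
\end{itemize}
Your labelling is the true one with upper and lower (equivalently \(N\) and \(S\)) interchanged, so the cycle structures are unchanged.

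Also, when \(n\) is even, the mirrors at odd multiples of \(\pi/n\) are not of the form \(2k\pi/n\), so your first bullet misses them. Your conjugation step covers them anyway: \(RFR^{-1}=R^2F\) rotates the mirror by \(\pi/n\), so all \(n\) reflections are conjugate to \(F\).
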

\begin{proof}
  The tilings of the \(n\)-trapezohedron under \(D_{nd} \cong D_{2n}\) have a natural bijection to the tilings \(2n \times 1\) cylinder under horizontal reflection and are computed in Theorem 3.1.6 of our previous paper \cite{KageyKeehn}.
\end{proof}
The bijection is illustrated in \Cref{fig:trapezohedronCylinderBijection}.

\begin{figure}
  \centering
  \begin{subfigure}[b]{0.49\textwidth}
    \centering
    \includegraphics[height=6cm]{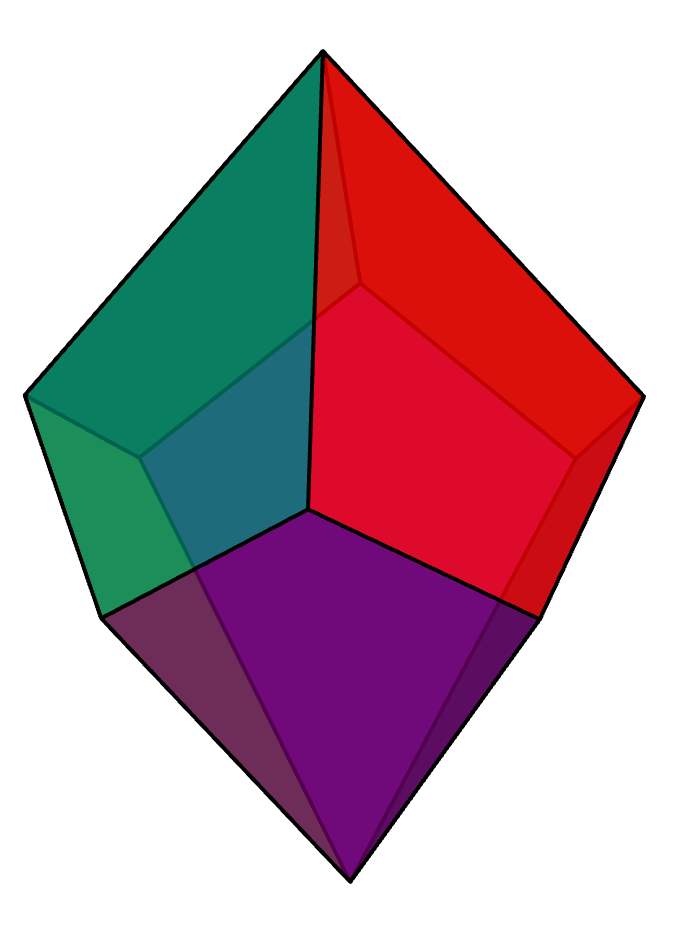}
    \caption{A view of the \(4\)-trapezohedron from the side}
  \end{subfigure}
  \begin{subfigure}[b]{0.49\textwidth}
    \centering
    \includegraphics[height=6cm]{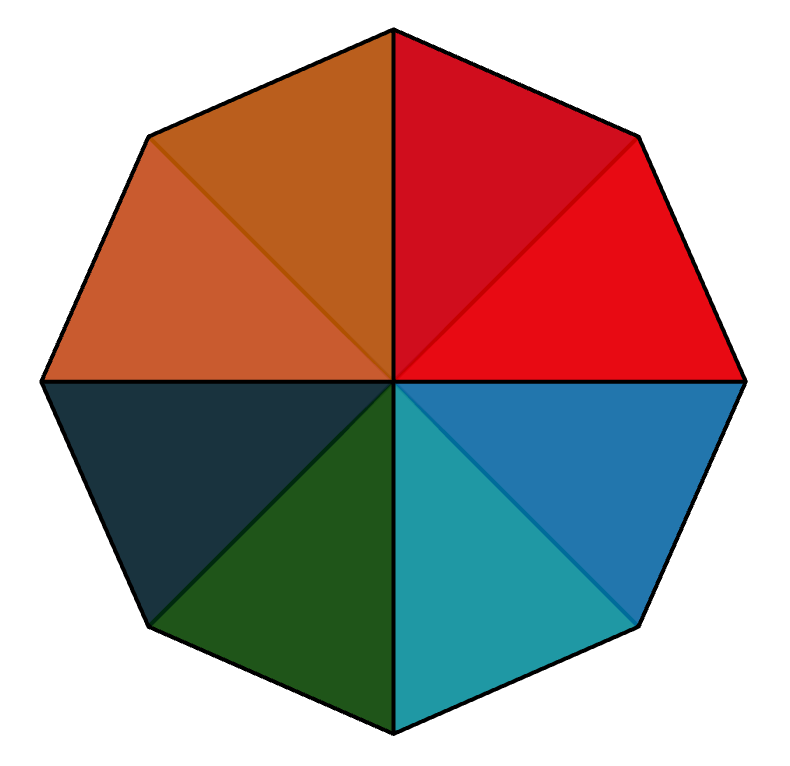}
    \caption{A view of the \(4\)-trapezohedron from the top}
  \end{subfigure}
  \caption{Two views of the \(4\)-trapezohedron illustrating that the transitive action of the antiprismatic symmetry group \(D_{4d}\) on the faces corresponds to the dihedral group of the octahedron \(D_8\) on the projected shadow.}
  \label{fig:trapezohedronCylinderBijection}
\end{figure}

\begin{oeis*}
  The number of \(c\)-colorings of the \(n\)-trapezohedron up to antiprismatic symmetry, (i.e., \(\tid{}=\tf{}=c\)) has been added to the OEIS as sequence \oeis{A396913}.
  \[
    \begin{array}{c|rrrrrrrr}
      & c=1 & c=2 & c=3 & c=4 & c=5 & c=6 & c=7 & c=8 \\ \hline
      n=3 & 1 & 13 & 92 & 430 & 1505 & 4291 & 10528 & 23052 \\
      n=4 & 1 & 30 & 498 & 4435 & 25395 & 107331 & 365260 & 1058058 \\
      n=5 & 1 & 78 & 3210 & 53764 & 493131 & 3037314 & 14158228 & 53762472 \\
      n=6 & 1 & 224 & 22913 & 704370 & 10196680 & 90782986 & 576960734 & 2863912668
    \end{array}
  \]
\end{oeis*}

\section{Further directions}
The strategy used here in the case of face-transitive polyhedra can be extended to polyhedra that are not face-transitive by specifying tile designs for each of the equivalence classes of faces up to congruence.

\subsection{Archimedean solids}
\label{subsec:Archimedean}
Archimedean solids have two or more types of faces, and so they require specifying the tile designs for each face type. Note that in the case of the truncated icosahedron (shown in \Cref{fig:jonPaulBall}) the rotational icosahedral group \(I\) induces \(C_5\) cyclic symmetry on the pentagonal faces, but only \(C_3\) cyclic symmetry on the hexagonal faces, because rotating a hexagonal face by \(60^\circ\) does not constitute an isometry of the polyhedron.
\begin{theorem}
  The number of ways of tiling the \(20\) hexagonal and \(12\) pentagonal faces of the truncated icosahedron up to rotational icosahedral symmetry \(I\), with \(t_{6,\id}\) hexagonal tile designs and \(t_{5,\id}\) pentagonal designs such that  \(t_{6,r}\) of the hexagonal designs are fixed under \(120^\circ\) rotation and \(t_{5,r}\) of the pentagonal designs are fixed under \(72^\circ\) rotation is given by the expression
  \begin{align*}
    &\frac{1}{60}
    \left(
      t_{6,\id}^{20}          t_{5,\id}^{12} +
      24t_{6,\id}^4           t_{5,\id}^2 t_{5,r}^2 +
      20t_{6,\id}^6 t_{6,r}^2 t_{5,\id}^4 +
      15t_{6,\id}^{10}        t_{5,\id}^6
    \right).
  \end{align*}
\end{theorem}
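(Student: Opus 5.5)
We apply the orbit-counting theorem (\Cref{thm:orbitCountingTheorem}) to the set of tilings of the truncated icosahedron, following the same case analysis as for the dodecahedron and icosahedron. The key observation is that the hexagonal faces of the truncated icosahedron correspond exactly to the faces of the icosahedron, and the pentagonal faces correspond exactly to the faces of the dodecahedron. More precisely, they are the truncations at the icosahedron's vertices, i.e.\ dual to its faces. Moreover, the action of \(I\) on each family of faces is the same as its action on the faces of those Platonic solids. Since an isometry preserves face type, a tiling fixed by \(A \in I\) is just a pair consisting of a fixed hexagon-tiling and a fixed pentagon-tiling. So \(|X^A|\) factors as the product of the two counts, and we may read each factor off the rotational cases already computed in \Cref{thm:icosahedron,thm:dodecahedron} (equivalently \Cref{thm:icosahedronRotation,thm:dodecahedronRotation}).

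We go through the conjugacy classes \(C_1^I, C_2^I, C_{2'}^I, C_3^I, C_4^I\) of \(I\). The identity contributes \(t_{6,\id}^{20}t_{5,\id}^{12}\).

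For a rotation of order \(5\) about an axis through two opposite pentagon centers, no hexagon is fixed. The \(20\) hexagons fall into \(4\) orbits of size \(5\), and the pentagons give \(t_{5,\id}^2 t_{5,r}^2\). For \(C_{2'}^I\) (rotation by \(144^\circ\)), the induced symmetry on the two fixed pentagons is \(r^2\). A design fixed by \(r^2\) is fixed by \(\langle r^2\rangle=\langle r\rangle\), so the count is again \(t_{5,r}^2\). This gives the coefficient \(24\).

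For a \(120^\circ\) rotation about an axis through opposite hexagon centers, the two fixed hexagons are rotated by \(120^\circ\), and the remaining \(18\) hexagons form \(6\) orbits of size \(3\). This yields \(t_{6,\id}^6t_{6,r}^2\). The pentagons form \(4\) free orbits, giving \(t_{5,\id}^4\), with coefficient \(20\).

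For a \(180^\circ\) rotation about an axis through midpoints of hexagon--hexagon edges, no face is fixed. This gives \(t_{6,\id}^{10}t_{5,\id}^6\), with coefficient \(15\). Summing and dividing by \(60\) gives the formula.

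The main point needing care is the geometric claim about how the axes meet the faces. Each order-\(5\) axis must pass through pentagon centers, each order-\(3\) axis through hexagon centers, and each order-\(2\) axis through the midpoints of edges shared by two hexagons. Consequently, \(C_2^I\) and \(C_{2'}^I\) fix no hexagons, and \(C_4^I\) fixes no faces at all. I would verify this using the truncation picture: the vertices, faces, and edges of the icosahedron become the pentagons, hexagons, and hexagon--hexagon edges respectively. I would also confirm that the \(120^\circ\) rotation induces \(r\), not \(r^2\), on a fixed hexagon; either way the count is \(t_{6,r}\), since \(\langle r\rangle = \langle r^2\rangle\).
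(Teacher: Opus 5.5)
Your proof is correct and follows the paper's method: orbit counting over the conjugacy classes of \(I\), where each fixed-point count comes from the cycle structure on faces and the rotation induced on fixed faces. The paper states this theorem without writing out the cases, but its formula is exactly the product of the rotational icosahedron factor (for the hexagons) and the rotational dodecahedron factor (for the pentagons), as you observe. Your remark that \(\langle r^k\rangle=\langle r\rangle\) for \(k\) coprime to the order is what justifies merging \(C_2^I\) with \(C_{2'}^I\); it also covers the fact that the two fixed faces of a rotation are turned in opposite senses, \(r\) and \(r^{-1}\). The paper leaves this detail implicit.
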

\begin{oeis*}
  When \(t_{6,\id} = t_{6,r} = t_{5,\id} = t_{5,r} = n\), this gives the number of \(n\)-colorings of the faces of the truncated icosahedron up to the \(60\) symmetries of the rotational icosahedral group \(I\) and has been added to the OEIS as sequence \oeis{A396861} (\(0\)-indexed): \[
    0, 1, 71600640, 30883680755649, 307445735641186304, \dots
  \]
\end{oeis*}
\begin{example}
The truncated icosahedron is the classic ``soccer ball'' shape, usually made with black pentagonal faces and white hexagonal faces. A soccer ball designer, Jon-Paul Wheatley, hand-sewed a truncated icosahedral ball \cite{JonPaulsBalls}, illustrated in Figure \ref{fig:jonPaul}, where the hexagonal faces were chosen to be hexagonal Truchet tiles and the pentagonal faces have dihedral symmetry. Up to rotation (but not reflection) there are 58\,127\,868 distinct soccer balls with these faces, since \(t_{6,\id}=3\), \(t_{6,r}=0\), and \(t_{5,\id} = t_{5,r} = 1\).
\end{example}
\begin{figure}
  \centering
  \begin{subfigure}[b]{0.3\textwidth}
    \includegraphics[width=\linewidth]{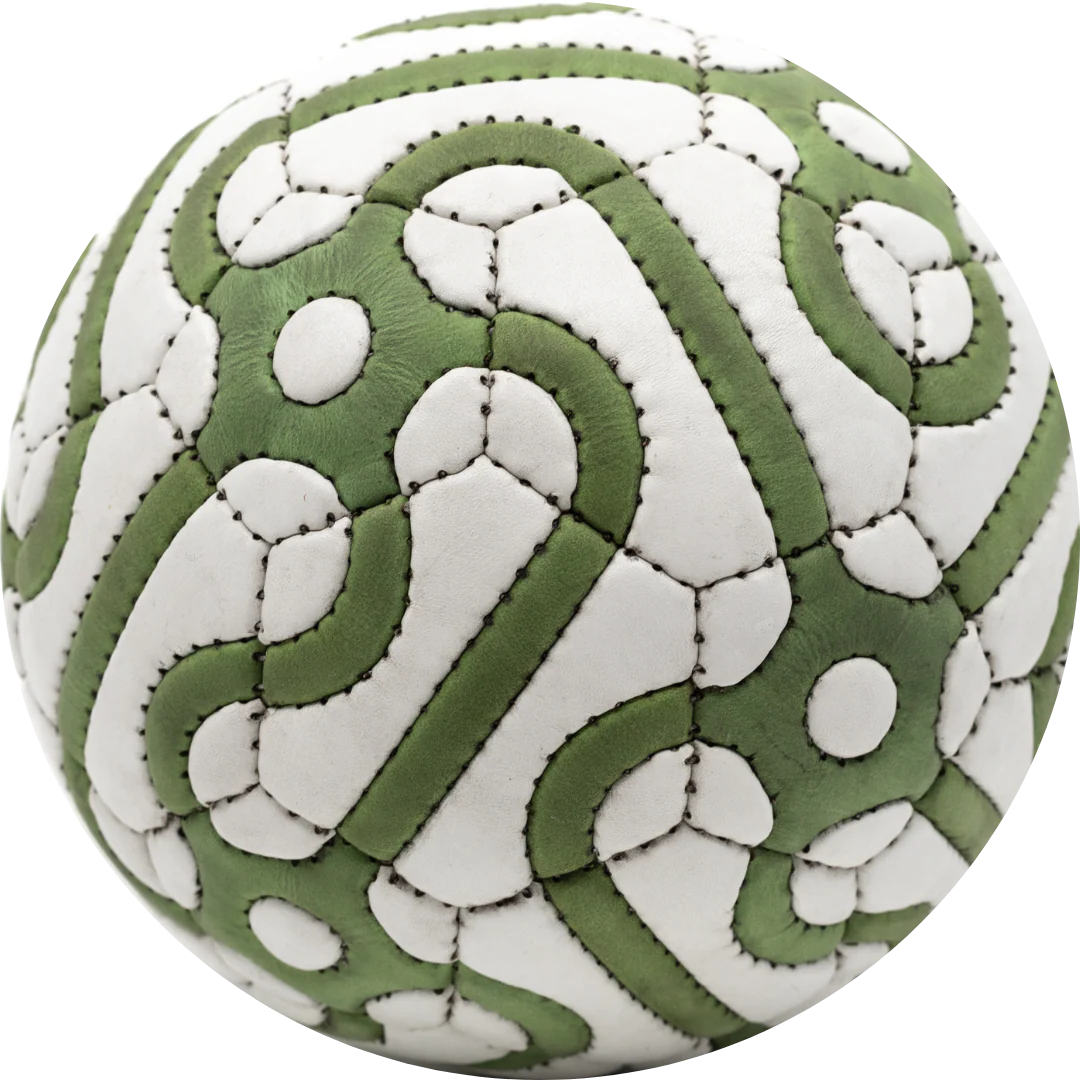}
    \caption{}
    \label{fig:jonPaulBall}
  \end{subfigure}
  \hfill
  \begin{subfigure}[b]{0.3\textwidth}
    \includegraphics[width=\linewidth]{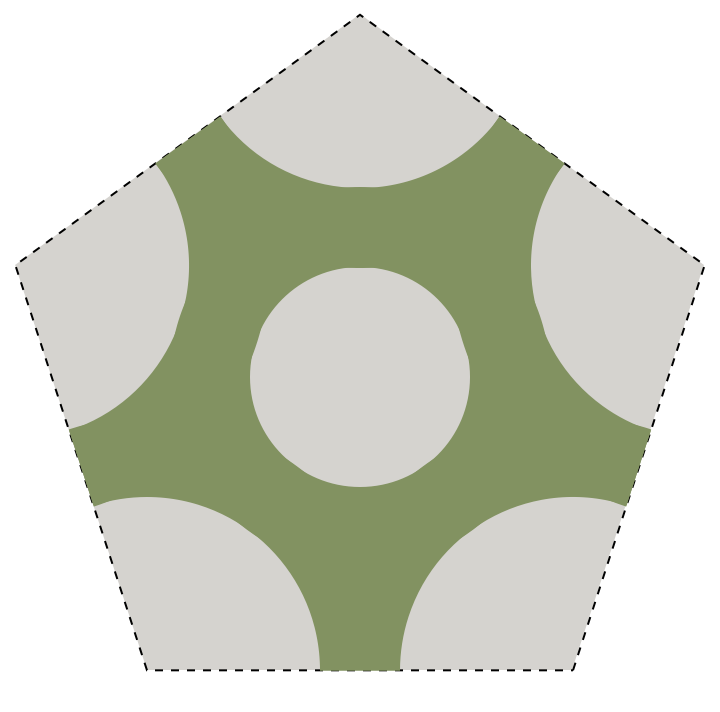}
    \caption{}
    \label{fig:jonPaulFace5}
  \end{subfigure}
  \hfill
  \begin{subfigure}[b]{0.3\textwidth}
    \includegraphics[width=\linewidth]{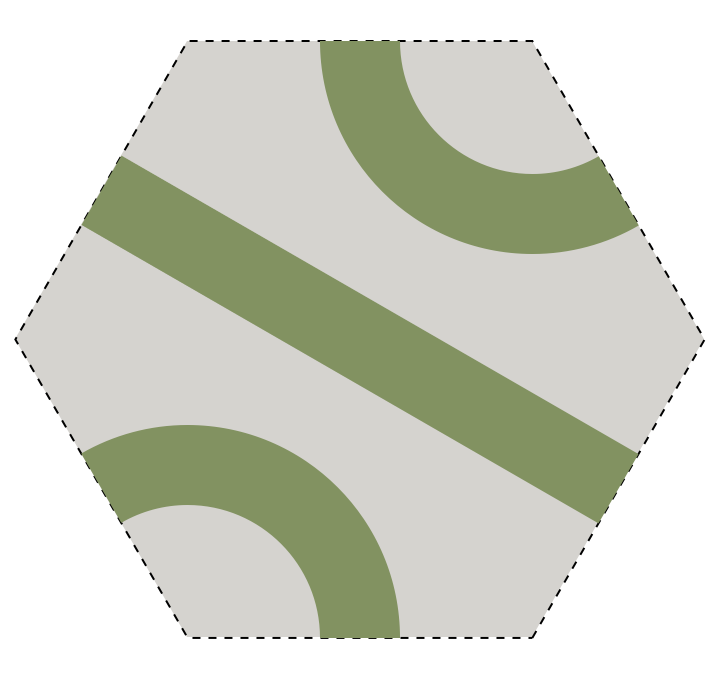}
    \caption{}
    \label{fig:jonPaulFace6}
  \end{subfigure}
  \caption{A soccer ball (a) designed and sewn by Jon-Paul Wheatley, featuring 12 pentagonal panels with full dihedral symmetry (b) and 20 panels that are hexagonal Truchet tiles (c). There are 58\,127\,868 distinct designs that can be made using these panels, up to the \(60\) symmetries of the rotational icosahedral group. (Photo courtesy of Jon-Paul Wheatley.)}
\label{fig:jonPaul}
\end{figure}
\begin{example}
  In 1997, Tantrix produced a puzzle, shown in \Cref{fig:TantrixRocks}, where players place tiles on the faces of a truncated octahedron.

  Ignoring the colors of the tile designs, there were two essentially different kinds of square tile designs, which appear in \Cref{fig:TantrixSquare1,fig:TantrixSquare2}, and three essentially different kinds of hexagonal tile designs, which appear in \Cref{fig:TantrixHexagon1,fig:TantrixHexagon2,fig:TantrixHexagon3}.

  \begin{alignat*}{3}
    t_{4,\id} &= 1 + 2 = 3 \qquad &&t_{4,r} = 1 \qquad
    &&t_{4,r^2} = 3
    \\
    t_{6,\id} &= 2 + 3 + 6 = 9 \qquad
    &&t_{6,r} = 2 + 0 + 0 = 2 \qquad &&
  \end{alignat*}

  We compute that there are
  \[
    \frac{1}{24} \left(
       t_{4,\id}^6            t_{6,\id}^8 +
      6t_{4,\id}t_{4,r}^2     t_{6,\id}^2 +
      3t_{4,\id}^2t_{4,r^2}^2 t_{6,\id}^4  +
      8t_{4,\id}^2            t_{6,\id}^2t_{6,r}^2 +
      6t_{4,\id}^3            t_{6,\id}^4
    \right) = 653\,827\,950.
  \] number of distinct ways of tiling the truncated octahedron with these tiles up to rotation but not reflection. (The formula also recovers OEIS sequence \oeis{A316093}, when \(t_{4,\id} = t_{4,r} = t_{4,r^2} = n\) and \(t_{4,\id} = t_{6,r} = k\).)
  \begin{figure}
    \begin{subfigure}[b]{0.08\linewidth}
      \centering
      \includegraphics[width=\linewidth]{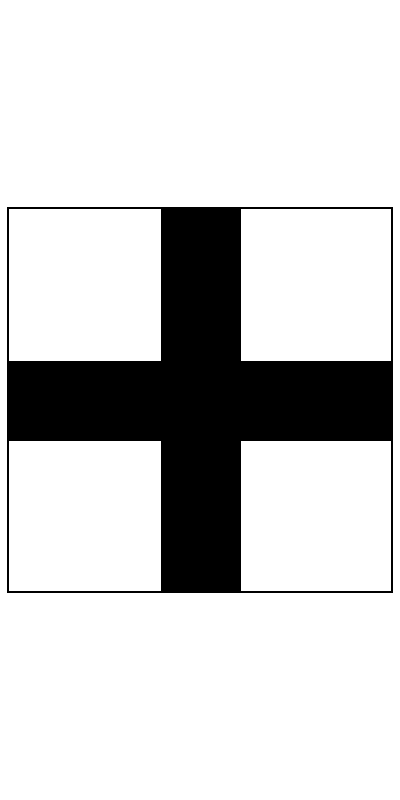}
      \caption{}\label{fig:TantrixSquare1}
    \end{subfigure}
    \hfill
    \begin{subfigure}
      [b]{0.08\linewidth}
      \centering
      \includegraphics[width=\linewidth]{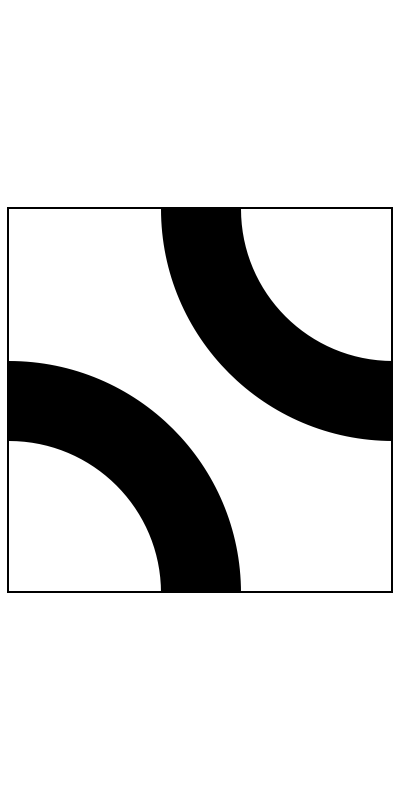}
      \caption{}\label{fig:TantrixSquare2}
    \end{subfigure}
    \hfill
    \begin{subfigure}[b]{0.14\linewidth}
      \centering
      \includegraphics[width=\linewidth]{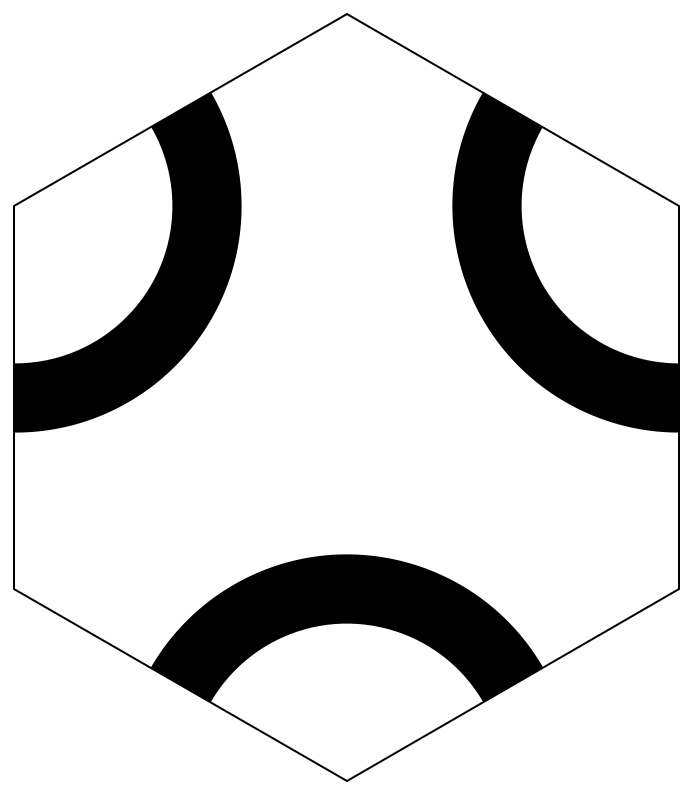}
      \caption{}\label{fig:TantrixHexagon1}
    \end{subfigure}
    \hfill
    \begin{subfigure}[b]{0.14\linewidth}
      \centering
      \includegraphics[width=\linewidth]{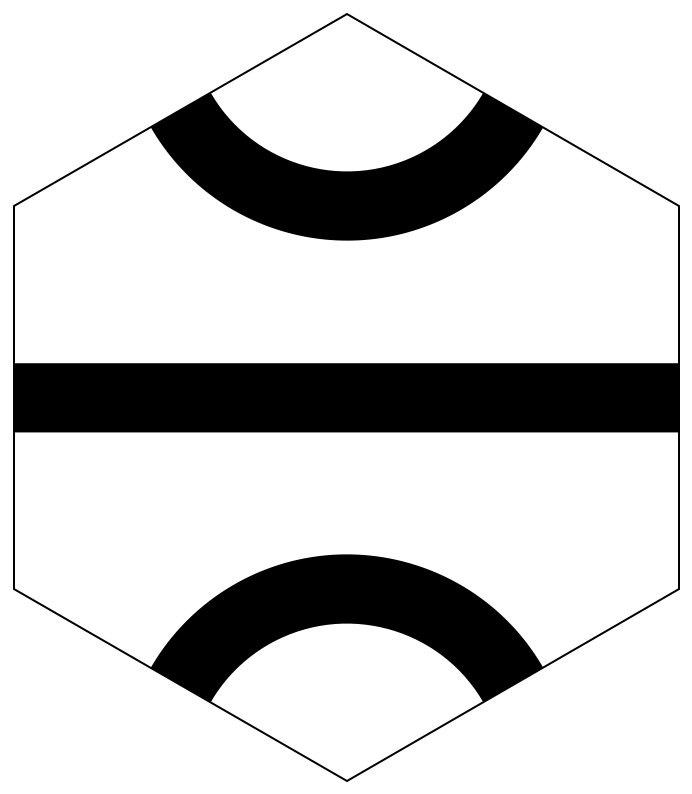}
      \caption{}\label{fig:TantrixHexagon2}
    \end{subfigure}
    \hfill
    \begin{subfigure}[b]{0.14\linewidth}
      \centering
      \includegraphics[width=\linewidth]{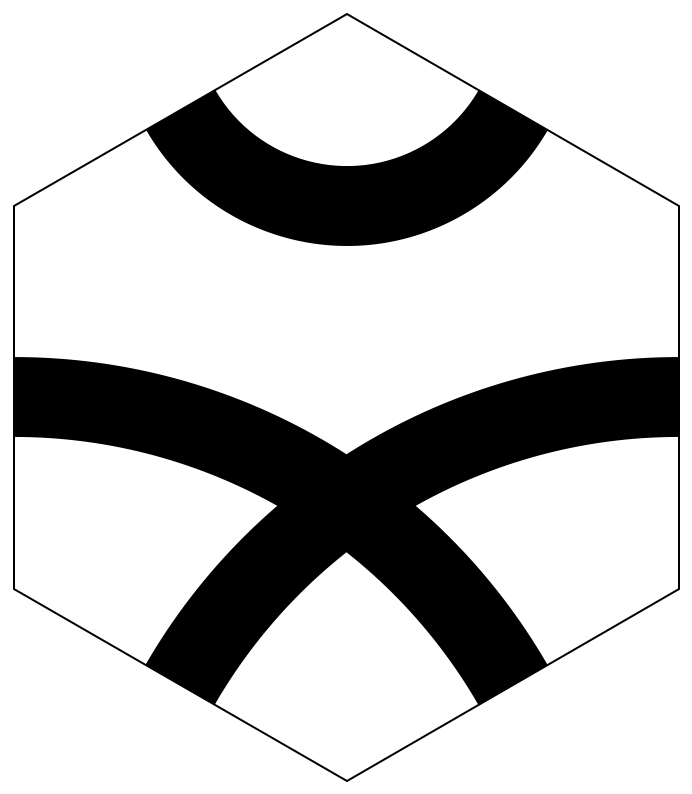}
      \caption{}\label{fig:TantrixHexagon3}
    \end{subfigure}
    \hfill
    \begin{subfigure}[b]{0.273\linewidth}
      \centering
      \includegraphics[width=\linewidth]{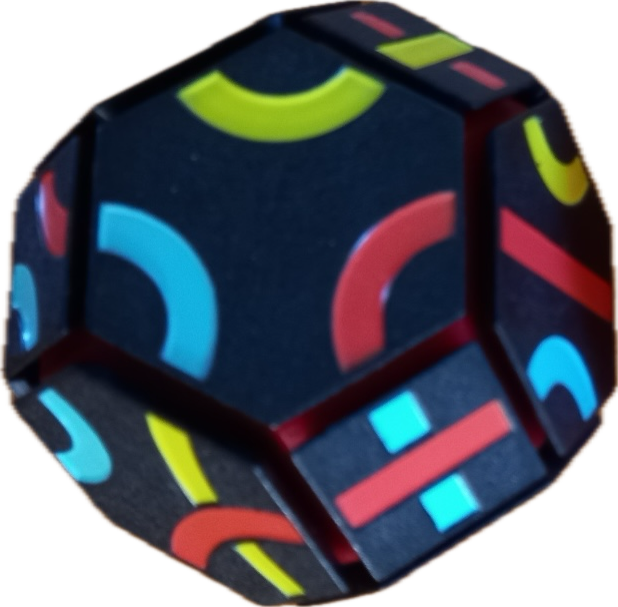}
      \caption{}\label{fig:TantrixRocks}
    \end{subfigure}
    \caption{The five essentially different tile designs (a) to (e) used in the Tantrix Rocks puzzle (f). (Photo courtesy of Mike McManaway.)}
    \label{fig:TantrixTiles}
  \end{figure}
\end{example}

\subsection{Higher dimensional tiles}
While we demonstrated how to count the number of tilings of cubes with \(n \times n\) grids on each face, we would be remiss not to remind the reader of Schattschneider's suggestion of counting tilings of \(n \times n \times n\) arrangements of cubes, where the ``tiles'' are \(3\)-dimensional structures internal to the cubes \cite{Schattschneider}. This construction has applications to chemistry, as described by Meekel et al.\ \cite{Meekel2023}.

\section*{Acknowledgements}
The authors would like to thank the Prison Mathematics Project for helping to facilitate our conversations, along with Matt Zucker, Jon-Paul Wheatley, and Mike McManaway for their examples and photographs.

{
  \setlength{\baselineskip}{13pt}
  \raggedright
  \bibliographystyle{plain}
  \bibliography{references_abbreviated.bib}
}
\end{document}